\newcommand{\nospacepunct}[1]{\makebox[0pt][l]{\,#1}} 
\theoremstyle{plain}
\newtheorem{thm}{Theorem}[section]
\newtheorem*{thm*}{Theorem}
\newtheorem{cor}[thm]{Corollary}
\newtheorem*{cor*}{Corollary}
\newtheorem{prop}[thm]{Proposition}
\newtheorem*{prop*}{Proposition}
\newtheorem{lem}[thm]{Lemma}
\newtheorem*{lem*}{Lemma}
\newtheorem*{exer*}{Exercise}
\newtheorem{conj}[thm]{Conjecture}
\newtheorem*{conj*}{Conjecture}
\theoremstyle{definition}
\newtheorem{const}[thm]{Construction}
\newtheorem*{const*}{Construction}
\newtheorem{defn}[thm]{Definition}
\newtheorem*{defn*}{Definition}
\newtheorem{ex}[thm]{Example}
\newtheorem*{ex*}{Example}
\newtheorem{q}[thm]{Question}
\newtheorem*{q*}{Question}
\theoremstyle{remark}
\newtheorem{rem}[thm]{Remark}
\newtheorem*{rem*}{Remark}
\theoremstyle{plain}
\Crefname{thm}{Theorem}{Theorems}
\Crefname{defn}{Definition}{Definitions}
\Crefname{lem}{Lemma}{Lemmata}
\Crefname{cor}{Corollary}{Corollaries}
\newcommand{\BS}{\mathrm{BS}}
\newcommand{\SL}{\mathrm{SL}}
\newcommand{\F}{\mathtt{F}}
\newcommand{\FP}{\mathtt{FP}}
\newcommand{\C}{\mathbb{C}}
\newcommand{\N}{\mathbb{N}}
\newcommand{\Q}{\mathbb{Q}}
\newcommand{\R}{\mathbb{R}}
\newcommand{\Z}{\mathbb{Z}}
\newcommand{\inv}{^{-1}}
\DeclareMathOperator{\cd}{cd}
\DeclareMathOperator{\Div}{Div}
\DeclareMathOperator{\Ext}{Ext}
\DeclareMathOperator{\id}{id}
\DeclareMathOperator{\Ore}{Ore}
\DeclareMathOperator{\rk}{rk}
\DeclareMathOperator{\Tor}{Tor}
\newcommand{\vertii}[1]{{\left\vert\kern-0.25ex\left\vert #1 \right\vert\kern-0.25ex\right\vert}}
\newcommand{\vertiii}[1]{{\left\vert\kern-0.25ex\left\vert\kern-0.25ex\left\vert #1 \right\vert\kern-0.25ex\right\vert\kern-0.25ex\right\vert}}
\newsavebox{\@brx}
\newcommand{\llangle}[1][]{\savebox{\@brx}{\(\m@th{#1\langle}\)}%
  \mathopen{\copy\@brx\mkern2mu\kern-0.9\wd\@brx\usebox{\@brx}}}
\newcommand{\rrangle}[1][]{\savebox{\@brx}{\(\m@th{#1\rangle}\)}%
  \mathclose{\copy\@brx\mkern2mu\kern-0.9\wd\@brx\usebox{\@brx}}}
\newcounter{comments}
\title[Division rings for virtually compact special and $3$-manifold groups]{Division rings for group algebras of virtually compact special groups and $3$-manifold groups}
\author{Sam P.~ Fisher}
\address[S.~P.~Fisher]{University of Oxford, Oxford, OX2 6GG, UK}
\email{sam.fisher@maths.ox.ac.uk}
\author{Pablo S\'anchez-Peralta}
\address[P.~ S\'anchez-Peralta]{Universidad Aut\'onoma de Madrid, Madrid, Spain}
\email{pablo.sanchezperalta@uam.es}
\begin{document}

\begin{abstract}
    Let $k$ be a division ring and let $G$ be either a torsion-free virtually compact special group or a finitely generated torsion-free $3$-manifold group. We embed the group algebra $kG$ in a division ring and prove that the embedding is Hughes-free whenever $G$ is locally indicable. In particular, we prove that Kaplansky's Zero Divisor Conjecture holds for all group algebras of torsion-free $3$-manifold groups. The embedding is also used to confirm a conjecture of Kielak and Linton. Thanks to the work of Jaikin-Zapirain and Linton, another consequence of the embedding is that $kG$ is coherent whenever $G$ is a virtually compact special one-relator group.
    
    If $G$ is a torsion-free one-relator group, let $\overline{kG}$ be the division ring containing $kG$ constructed by Lewin and Lewin. We prove that $\overline{kG}$ is Hughes-free whenever a Hughes-free $kG$-division ring exists. This is always the case when $k$ is of characteristic zero; in positive characteristic, our previous result implies that this happens when $G$ is virtually compact special.
\end{abstract}

\maketitle

\section{Introduction}

The Kaplansky Zero Divisor Conjecture asserts that $kG$ is a domain, where $k$ is a field and $G$ is a torsion-free group. One of the first approaches to the Zero Divisor Conjecture was to embed $kG$ in a division ring; this strategy was successfully implemented by Mal'cev and Neumann, independently, in the case where $G$ is bi-orderable \cite{Malcev_series, Neumann_series}, where the group algebra is embedded into the Mal'cev--Neumann division ring of power series with well-ordered supports. Note that while the Kaplansky Zero Divisor Conjecture is still wide open, there are also no counterexamples to the following, a priori stronger, conjecture.

\begin{conj}\label{conj:divRing}
    If $G$ is a torsion-free group and $k$ is a field, then $kG$ embeds into a division ring.
\end{conj}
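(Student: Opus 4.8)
The plan is to attack \Cref{conj:divRing} not through a single universal construction but by combining the principal known embedding techniques and reducing the general case to them, while isolating the torsion-free groups for which no technique currently applies. The cleanest input is the Mal'cev--Neumann construction already recalled above: whenever $G$ is bi-orderable, a chosen order embeds $kG$ into the division ring of formal power series with well-ordered support, so the statement holds on the nose for that class. The first step is therefore to reduce to finitely generated $G$. Writing $kG = \bigcup_{H} kH$ over the finitely generated subgroups $H \le G$, a compatible directed system of embeddings $kH \hookrightarrow D_H$ with commuting transition maps $D_H \hookrightarrow D_{H'}$ for $H \le H'$ has a colimit $\varinjlim D_H$ which is again a division ring containing $kG$; the subtlety is that the $D_H$ must be chosen \emph{coherently}. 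This is exactly where a \emph{canonical} embedding is wanted, since a functorial choice of $D_H$ makes compatibility automatic, and it is the reason the Hughes-free condition (whose embeddings are canonical by Hughes' uniqueness theorem) is the right target rather than an arbitrary embedding.

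With the reduction in place, the plan is to stratify finitely generated torsion-free groups by the structural feature available and supply an embedding on each stratum. For locally indicable $G$ one aims to build a Hughes-free $kG$-division ring inductively: a surjection $G \twoheadrightarrow \Z$ with kernel $N$ presents $kG$ as a crossed product $kN * \Z$, and a Hughes-free division ring for $N$ should extend across this crossed product by a twisted Laurent--Ore localization, reducing existence for $G$ to existence for $N$; this is the mechanism underlying the structured classes named in the abstract. In characteristic zero there is an independent route that needs no ordering or indicability: realize $k G$ inside the algebra $\mathcal{U}(G)$ of operators affiliated to the group von Neumann algebra and pass to its division closure, which is a division ring exactly when the strong Atiyah conjecture holds for $G$ (Linnell's theorem), the torsion-freeness guaranteeing the hypothesis on orders of finite subgroups.

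The key step, and the one I expect to be the genuine obstacle, is covering the torsion-free groups that fall through every one of these nets at once: those that are not bi-orderable, so the power-series method is unavailable; not locally indicable, so the Hughes-free induction has no $\Z$-quotients to climb; and for which the strong Atiyah conjecture is open, or the coefficient ring $k$ forbids the analytic construction. For such a group there is presently no candidate division ring whatsoever, and closing the gap would require either a new invariant-theoretic criterion --- exhibiting a prime matrix ideal of $kG$ in the sense of Cohn's localization theory, equivalently a faithful integer-valued Sylvester matrix rank function --- or a proof that bi-orderability, local indicability, and the validity of the strong Atiyah conjecture together exhaust all torsion-free groups. The latter is not expected to hold, so the realistic form of the plan yields the conjecture only \emph{conditionally}: unconditionally for bi-orderable or suitably locally indicable $G$ via the constructions above, and in characteristic zero modulo the strong Atiyah conjecture. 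Proving the embedding for an arbitrary torsion-free $G$ with no such structure is, by this analysis, the crux that lies beyond the present techniques, which is precisely why one restricts the main theorems to classes where one of these mechanisms is guaranteed to apply.
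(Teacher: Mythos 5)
The statement you were asked to prove is \cref{conj:divRing}, which is an \emph{open conjecture}: the paper never proves it, and offers no proof to compare yours against. What the paper proves is \cref{thm:main}, namely the special cases of torsion-free virtually compact special groups and torsion-free $3$-manifold groups, via two genuinely new mechanisms that your proposal does not touch: Schreve's factorisation property (used in \cref{thm:VCS_field} to pass division-ring embeddings from a finite-index Hughes-free embeddable subgroup up to $G$ through a torsion-free elementary amenable quotient and Ore localisation) and the graph-of-rings combination theorem (\cref{thm:graphOfHF}), needed for closed graph manifolds where the factorisation property is unknown. Your proposal is instead a taxonomy of the three classical strata --- Mal'cev--Neumann for bi-orderable groups, Hughes-free induction for locally indicable groups, and Linnell's theorem plus the strong Atiyah conjecture in characteristic zero --- and it ends by explicitly conceding that groups outside all three are beyond reach. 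As a proof of \cref{conj:divRing} it therefore has an irreparable gap, but that gap \emph{is} the conjecture; your honesty about this is correct, and your final conditional statement accurately reflects the state of the art described in the paper's introduction.

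Two technical caveats on the parts you present as firm. First, the ``Hughes-free induction'' over a surjection $G \twoheadrightarrow \Z$ with kernel $N$ is not a well-founded recursion: $N$ is in general not finitely generated and not smaller in any inductive measure, so the reduction ``existence for $G$ to existence for $N$'' never terminates by itself. This is precisely why Hughes-free embeddability of \emph{all} locally indicable groups remains open (the conjecture of Jaikin-Zapirain cited in the paper); in characteristic zero it is a theorem of Jaikin-Zapirain and L\'opez-\'Alvarez obtained by entirely different means, not by this induction. Note also that extending $\mathcal D_{kN}$ across the crossed product requires extending the twisting automorphism from $kN$ to $\mathcal D_{kN}$, which already needs Hughes' uniqueness theorem --- this is exactly \cref{lem:twisted_ext} in the paper. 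Second, your colimit reduction to finitely generated subgroups works only when the embeddings are canonical: for Hughes-free embeddings it is legitimate (the paper runs exactly this maneuver at the end of the proof of \cref{thm:3mfld}, where local indicability makes the directed system compatible), but for bare, non-canonical embeddings the coherence problem you flag is genuine and unsolved, so the reduction to the finitely generated case is not available unconditionally --- which is why the finite generation hypothesis appears in \cref{thm:main}\ref{item:3} and is dropped only under local indicability.
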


When trying to prove the Zero Divisor Conjecture in characteristic zero, there are many analytic tools at our disposal. Most notably, one can use the well-developed arsenal of $L^2$-techniques to prove the Strong Atiyah Conjecture for a torsion-free group $G$, which Linnell showed to be equivalent to the statement that the division closure of $\C G$ in $\mathcal U(G)$ is a division ring, where $\mathcal U(G)$ denotes the algebra of operators affiliated to the $G$-equivariant bounded operators on $L^2(G)$ (\cite{LinnellDivRings93}, and see \cite[Section 10]{Luck02} for relevant background). Thus, the Strong Atiyah Conjecture for a torsion-free group $G$ implies \cref{conj:divRing} when $k = \C$. The Strong Atiyah Conjecture is known for many notable classes of groups, including
\begin{enumerate}
    \item torsion-free groups in Linnell's class $\mathcal C$ \cite[Theorem 1.5]{LinnellDivRings93}, defined as the smallest class of groups containing all free groups and closed under elementary amenable extensions and directed unions. Linnell's class $\mathcal C$ was recently shown to contain all $3$-manifold groups by Kielak and Linton \cite{KielakLinton_3mfldAtiyah};
    \item the class of locally indicable groups, due to Jaikin-Zapirain and L\'opez-\'Alvarez \cite{JaikinLopez_Atiyah};
    \item the class of virtually compact special groups, as defined by Haglund and Wise \cite{HaglundWise_special}, which are groups that are virtually the fundamental group of a nonpositively curved compact cube complex that avoids certain pathological hyperplane configurations. This is due to Schreve \cite{Schreve_AtiyahVCS}.
\end{enumerate}
Our list is far from exhaustive, and we refer the reader to Jaikin-Zapirain's survey \cite{Jaikin_l2survey} for a good account of what is known about the Strong Atiyah Conjecture.

The purpose of this article is to extend some of these embedding results beyond characteristic zero. Because of the lack of a suitable analogue of $\mathcal U(G)$ in positive characteristic, the methods we employ are necessarily more algebraic. Throughout, we will work with crossed products $k*G$ of a torsion-free group $G$ with a division ring $k$. The elements of $k*G$ are formal sums of elements of $G$ with coefficients in $k$, where the multiplication is twisted (see \cref{subsec:LinnellHughes} for more details). We emphasise that the group algebra $kG$ is an example of a crossed product, where the twisting is trivial. A central concept will be that of a Linnell embedding: if $G$ is a torsion-free group and $k$ is a division ring, we say that an embedding $\varphi \colon k*G \hookrightarrow \mathcal D$ is \textit{Linnell} if $\mathcal D$ is a division ring generated by the image of $k*G$ and it satisfies the following linear independence condition:
\begin{enumerate}
    \item[(L)] let $H \leqslant G$ be a subgroup and let $S$ is a system of right coset representatives for $H$ in $G$, then $S$ is left-linearly independent over the division closure of $k*H$ in $\mathcal D$.
\end{enumerate}
The concept will be recalled in more detail in \cref{sec:prelims}. In this case, $\mathcal D$ is called a \textit{Linnell division ring} for $k * G$. When $G$ is locally indicable, a Linnell division ring is unique up to $k*G$-isomorphism \cite{HughesDivRings1970} and called a \textit{Hughes-free division ring} for $k*G$; it is then denoted by $\mathcal D_{k*G}$. These notions will be recalled in more detail in \cref{sec:prelims}. We highlight the following conjecture, which is a strengthening of \cref{conj:divRing} and is due to Jaikin-Zapirain and Linton in the case of a group algebra $kG$ over a field $k$ \cite[Conjecture 1]{JaikinLinton_oneRelCoherence}.

\begin{conj}\label{conj:pAtiyah}
    Let $G$ be a torsion-free group and let $k$ be a division ring. Then any crossed product $k*G$ embeds in a Linnell division ring, which is unique up to $k*G$-isomorphism.
\end{conj}

We view \cref{conj:pAtiyah} as a version of the Strong Atiyah Conjecture in positive characteristic. Our main result is the following.

\begin{thm}\label{thm:main}
    Let $k$ be a division ring, let $G$ be a torsion-free group, and let $k*G$ be any crossed product. If $G$ is
    \begin{enumerate}[label=(\arabic*)]
        \item\label{item:vcs} (\cref{cor:VCSinField})  virtually the fundamental group of a compact special cube complex, then $k*G$ embeds into a Linnell division ring, which is unique up to $k*G$-isomorphism;
        \item\label{item:3} (\cref{thm:3mfld,thm:nonori3mfld}) the fundamental group of a compact $3$-manifold, then $k*G$ has an embedding into a division ring, and the embedding can be made Hughes-free when $G$ is locally indicable.
    \end{enumerate}
\end{thm}

Our proof of \ref{item:vcs} builds heavily on arguments of Linnell and Schick in \cite{LinnellSchick_AtiyahExt} and on Schreve's proof of the Strong Atiyah Conjecture for virtually compact special groups \cite{Schreve_AtiyahVCS}, where he introduced the \textit{factorisation property}, the key tool which allows us to produce the embeddings. The first main ingredient in the proof of \ref{item:3} is a result of Friedl--Schreve--Tillmann \cite[Theorem 3.3]{FriedlSchreveTillmann_ThurstonFox} which, together with \cite[Theorem 1.1]{KielakLinton_3mfldAtiyah}, allows us to show that the fundamental group of any irreducible $3$-manifold that is not a closed graph manifold has the factorization property and that the group algebras of their fundamental groups embed in division rings (see \cref{thm:nonGraphDivRings}). The second main ingredient is the graph of rings construction studied in \cref{sec:GraphsRings}, which is what is needed to cover the case where $M$ is a closed graph manifold; namely, we prove the following combination theorem which allows us to produce our embeddings.

\begin{thm}[\cref{cor:locIndGraphDivRing,cor:atiyahGraph}]
    Let $k$ be a division ring and let $\mathcal G_\Gamma = (G_v, G_e)$ be a graph with fundamental group $G$.
    \begin{enumerate}
        \item Fix a crossed product $k*G$ and suppose that $G_v$ is locally indicable and that there is a Hughes-free embedding $k*G_v \hookrightarrow \mathcal D_{k*G_v}$ for each vertex $v$. Then $k*G$ embeds into a division ring.
        \item Suppose that $k$ is a subfield of $\C$ and that every vertex group $G_v$ satisfies the Strong Atiyah Conjecture (over $k$). Then the group algebra $kG$ embeds into a division ring.
    \end{enumerate}
\end{thm}

The case where $G$ is the fundamental group of a non-orientable $3$-manifold is not significantly different and is handled in \cref{sec:appendix} in order to keep the exposition as straightforward as possible.

The following corollary of \cref{thm:main} is immediate. We do not need to assume that the $3$-manifold group is finitely generated since the Zero Divisor Conjecture can be verified locally.

\begin{cor}\label{cor:3mfldKap}
    If $G$ is torsion-free and the fundamental group of a $3$-manifold, then $k*G$ is a domain for any division ring $k$ and any crossed product structure.
\end{cor}

Note that Aschenbrenner--Friedl--Wilton asked whether $\Z G$ satisfies the Zero Divisor Conjecture when $G$ is the fundamental group of an irreducible, orientable, manifold with empty or toroidal boundary \cite[Question 7.2.6(6)]{AFW_3mfldBook}, the only unknown case at the time being that of closed graph manifolds. The question was answered positively by Kielak--Linton with their proof of the Strong Atiyah Conjecture for $3$-manifold groups \cite[Corollary 1.2]{KielakLinton_3mfldAtiyah} and \cref{cor:3mfldKap} extends this to positive characteristic. Of course, our result also implies the Zero Divisor Conjecture for torsion-free virtually compact special groups, however this can be deduced from the fact that they have the factorisation property by \cite[Corollary 4.3]{Schreve_AtiyahVCS} and \cite[Theorem 3.7]{FriedlSchreveTillmann_ThurstonFox}. Indeed, since torsion-free virtually compact special groups are residually finite and have the factorisation property, they are fully residually (torsion-free elementary amenable), and group algebras of torsion-free elementary amenable groups satisfy the Zero Divisor Conjecture \cite{KropLinnellMoody_TFEAisOre}. Recall that a group $G$ is said to be \textit{fully residually} $\mathcal P$ (for a property $\mathcal P$ of groups) if for every finite subset $S \subseteq G$ there is a group homomorphism $\varphi \colon G \rightarrow H$ that is injective on $S$ such that $H$ satisfies $\mathcal P$.

It is interesting to remark that in contrast to \cref{cor:3mfldKap}, Gardam's counterexample to Kaplansky's Unit Conjecture was $\mathbb F_2 G$, where $G \cong \Z^3 \rtimes (\Z/2 \oplus \Z/2)$ is isomorphic to the fundamental group of the Hantzsche--Wendt manifold, a flat $3$-manifold \cite{Gardam_units}. The result was extended by Murray who showed that the group algebra $\mathbb F_p G$ also contains non-trivial units for every prime $p$ \cite{murray2021counterexamples}.

\subsection*{Consequences of the existence of \texorpdfstring{$\mathcal D_{kG}$}{}}

In a recent breakthrough, Jaikin-Zapirain and Linton proved that one-relator groups are coherent, \cite{JaikinLinton_oneRelCoherence}, confirming a conjecture of Baumslag \cite{Baumslag_OneRelProblems}, as are their group algebras over fields of characteristic $0$. \cite[Theorem 1.1]{JaikinLinton_oneRelCoherence}. In fact, they showed that $kG$ is coherent whenever $G$ is the fundamental group of a reducible $2$-complex without proper powers \cite[Theorem 1.6]{JaikinLinton_oneRelCoherence} (see the discussion before \cref{cor:coherence} for a definition) and $k$ is a field of characteristic $0$. Note that all torsion-free one-relator groups are fundamental group of such $2$-complexes. The only reason one must assume that $k$ is of characteristic zero is that in this case the Strong Atiyah Conjecture for one-relator groups \cite{JaikinLopez_Atiyah} implies that a Hughes-free division ring $\mathcal D_{kG}$ exists. Thus, combining our construction of $\mathcal D_{kG}$ for $\operatorname{char}(k) > 0$ with Jaikin-Zapirain and Linton's arguments, we obtain the following.

\begin{cor}[\cref{cor:coherence}]
    Let $G$ be a virtually compact special and the fundamental group of a finite reducible $2$-complex. Then any crossed product $k*G$ is coherent for any division ring $k$.
\end{cor}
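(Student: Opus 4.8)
The plan is to reduce the statement, for a virtually compact special one-relator group $G$ and an arbitrary division ring $k$, to the verbatim machinery of Jaikin-Zapirain's coherence theorem \cite{Jaikin_oneRelCoherence}. The only obstruction to running that argument in positive characteristic is the existence of a Hughes-free division ring $\mathcal D_{kG}$: everything else in his proof is characteristic-agnostic and uses only the Linnell/Hughes-free structure of the completed group algebra. So first I would observe that a one-relator group $G$ is torsion-free precisely when it is either free (the relator is not a proper power) or has the relator a proper power; since we are assuming $G$ is \emph{virtually compact special} and torsion-free, \cref{thm:main}\ref{item:vcs} supplies an embedding of $kG$ into a division ring, and the locally indicable case even yields a Hughes-free embedding $kG \hookrightarrow \mathcal D_{kG}$.

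The key point is therefore to verify local indicability so that the Hughes-free conclusion of \cref{thm:main} applies. Here I would invoke the classical fact that torsion-free one-relator groups are locally indicable (a result of Brodski\u\i{} and, independently, Howie), which together with torsion-freeness and the virtually-compact-special hypothesis places us squarely in the ``Moreover'' clause of \cref{thm:main}. That clause gives a genuinely Hughes-free embedding $kG \hookrightarrow \mathcal D_{kG}$, valid for \emph{any} division ring $k$ regardless of characteristic. By Hughes' uniqueness theorem (recalled in \cref{sec:prelims}), this $\mathcal D_{kG}$ is the canonical object on which Jaikin-Zapirain's coherence argument is built.

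With $\mathcal D_{kG}$ in hand, the final step is simply to feed it into Jaikin-Zapirain's proof. In \cite[Theorem 1.1]{Jaikin_oneRelCoherence} the hypothesis ``$k$ has characteristic zero'' is used \emph{only} to guarantee, via the Atiyah conjecture for one-relator groups \cite{JaikinLopez_Atiyah}, that a Hughes-free $\mathcal D_{kG}$ exists; all subsequent homological and approximation arguments (finite presentation of finitely generated ideals, the relevant properties of the Linnell ring, the rationality/Sylvester-rank-function input) proceed identically once such a division ring is available. Thus I would state explicitly that substituting our $\mathcal D_{kG}$ for the characteristic-zero one makes Jaikin-Zapirain's argument go through unchanged, yielding coherence of $kG$.

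The main obstacle is not mathematical novelty but bookkeeping: one must confirm that no step in \cite{Jaikin_oneRelCoherence} secretly uses characteristic zero beyond the construction of $\mathcal D_{kG}$ — for instance in any appeal to $L^2$-Betti numbers, the von Neumann algebra $\mathcal U(G)$, or analytic approximation. I expect these to be replaceable by the purely algebraic Hughes-free framework, since the Linnell property and Hughes-freeness encode exactly the linear-independence and division-closure facts those analytic tools were providing. Verifying this carefully — and citing the precise lemmas of \cite{Jaikin_oneRelCoherence} that remain valid — is the crux of the argument.
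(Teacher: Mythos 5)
Your proposal is essentially the paper's own argument: torsion-free one-relator groups are locally indicable by Brodski\u{\i}, so \cref{thm:main}\ref{item:vcs} (that is, \cref{cor:VCSinField} together with \cref{lem:HF_fi}) supplies a Hughes-free division ring $\mathcal D_{kG}$ for any division ring $k$, and since the only use of characteristic zero in \cite[Theorem 1.1]{Jaikin_oneRelCoherence} is to guarantee the existence of such a $\mathcal D_{kG}$, Jaikin-Zapirain's proof then runs unchanged --- this is exactly how the paper deduces \cref{cor:coherence}. Two minor corrections: the version of the corollary actually proved in the body of the paper carries the hypothesis that $G$ is torsion-free (which you use implicitly via local indicability), and your parenthetical characterization of torsion-freeness is backwards --- by Karrass--Magnus--Solitar a one-relator group has torsion if and only if its relator \emph{is} a proper power --- though this slip is harmless because the fact your argument actually relies on is Brodski\u{\i}'s local indicability of torsion-free one-relator groups, which you invoke correctly.
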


In another direction, an embedding $kG \hookrightarrow \mathcal D_{kG}$, where $G$ is locally indicable, gives the Hughes-free division ring $\mathcal D_{kG}$ the structure of a $kG$-module and thus we can use it to compute $H_n(G; \mathcal D_{kG})$ and $b_n^{\mathcal D_{kG}}(G) := \dim_{\mathcal D_{kG}} H_n(G; \mathcal D_{kG})$. These are examples of \textit{agrarian invariants} of the group $G$, which were first introduced and studied by Henneke and Kielak in \cite{HennekeKielak_agrarian}. When $k = \Q$ and $G$ is locally indicable, then $\mathcal D_{\Q G}$ always exists and coincides with the division closure of $\Q G$ in $\mathcal U(G)$ mentioned earlier. It then follows that $b_n^{(2)}(G) = b_n^{\mathcal D_{\Q G}}(G)$, so the $L^2$-Betti numbers of $G$ are examples of agrarian invariants.  If $k$ is a field of characteristic $p>0$, then we think of the Betti numbers $b_n^{\mathcal D_{kG}}(G)$ as mod $p$ analogues of the usual $L^2$-Betti numbers. Indeed, in \cite{Fisher_improved} the Betti numbers $b_n^{\mathcal D_{kG}}(G)$ were shown to have many analogous properties to those of $L^2$-Betti numbers, in particular in how they control finiteness properties of kernels of algebraic fibrations and in \cite{FisherHughesLeary_artin} and \cite{AOS_hyperbolization} they were related to the mod $p$ homology growth of $G$.

Recently, Kielak and Linton proved the following embedding theorem for hyperbolic virtually compact special groups.

\begin{thm}[{\cite[Theorem 1.11]{KielakLinton_FbyZ}}]\label{thm:KLmain}
    Let $H$ be hyperbolic and virtually compact special with $\cd_\Q(H) \geqslant 2$. Then, there exists a finite index subgroup $L \leqslant H$ and a map of short exact sequences
    \[
        \begin{tikzcd}
            1 \arrow[r] & K \arrow[d, hook] \arrow[r] & L \arrow[d, hook] \arrow[r] & \Z \arrow[d, Rightarrow, no head] \arrow[r] & 1 \\
            1 \arrow[r] & N \arrow[r]                 & G \arrow[r]                 & \Z \arrow[r]                                & 1
        \end{tikzcd}
    \]
    such that
    \begin{enumerate}[label = (\arabic*)]
        \item $G$ is hyperbolic, compact special, and contains $L$ as a quasi-convex subgroup.
        \item $\cd_\Q(G) = \cd_\Q(H)$.
        \item $N$ is finitely generated.
        \item If $b_p^{(2)}(H) = 0$ for all $2 \leqslant p \leqslant n$, then $N$ is of type $\FP_n(\Q)$.
        \item If $b_p^{(2)}(H) = 0$ for all $p \geqslant 2$, then $\cd_\Q(N) = \cd_\Q(H) - 1$.
    \end{enumerate}
\end{thm}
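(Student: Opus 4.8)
The plan is to split the statement into its homological content (the finiteness properties (3)--(5) of $N$, governed by the vanishing of the $L^2$-Betti numbers) and its geometric content (the construction of the ambient group $G$ realising (1)--(2)). The two halves are glued by the diagram itself: the top row records a \emph{virtual} algebraic fibration of $H$, while the bottom row promotes it to an honest fibration of a group of the same rational cohomological dimension. Throughout I will exploit that hyperbolicity, torsion-freeness, the RFRS condition, the type $\FP_n(\Q)$, and (for torsion-free groups over $\Q$) the value of $\cd_\Q$ are all inherited under passage to finite-index subgroups, and that $L^2$-Betti numbers are multiplicative there, so that the vanishing ranges in (4)--(5) are preserved.

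First I would reduce to the special RFRS case. By definition $H$ contains a finite-index compact special subgroup, which by \cite{HaglundWise_special} embeds as a quasi-convex virtual retract of a right-angled Artin group and is in particular virtually RFRS; so after passing to a finite-index subgroup I may assume $H$ is torsion-free, compact special, and RFRS. The core input is then the $L^2$-Betti-number criterion for algebraic fibering, in the Newton-polytope / agrarian form of the BNSR invariants (see \cite{Fisher_improved}): a RFRS group of type $\FP_n(\Q)$ whose $L^2$-Betti numbers vanish in the relevant range admits, after passing to a finite-index subgroup $L$, a surjection $\phi_L\colon L \twoheadrightarrow \Z$ lying in the open cone controlling $\Sigma^n(L;\Q)$. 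When $\btwo{p}(H) = 0$ for $2 \le p \le n$ the kernel $K = \ker \phi_L$ is of type $\FP_n(\Q)$, and when all higher $L^2$-Betti numbers vanish $K$ is of type $\FP(\Q)$ with $\cd_\Q K = \cd_\Q H - 1$. This produces the top row $1 \to K \to L \to \Z \to 1$ and the analogues of (3)--(5) for $L$.

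The main obstacle is the construction of $G$. One must upgrade the fibration of the finite-index subgroup $L$ to an honest fibration $\phi_G\colon G \twoheadrightarrow \Z$ of a hyperbolic, compact special group $G \supseteq L$ in which $L$ is quasi-convex, with $\phi_G|_L = \phi_L$, $\ker \phi_G = N \supseteq K$, and crucially $\cd_\Q G = \cd_\Q H$. The natural route is cubical: realise $\phi_L$ by a combinatorial map from the special cube complex carrying $L$ to the circle, and assemble $G$ from this data by a malnormal special combination, using Wise's Malnormal Special Quotient Theorem together with the canonical completion-and-retraction, so that hyperbolicity, specialness, and the quasi-convexity of $L$ are preserved. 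The delicate point, which I expect to be the heart of the proof, is to keep $\cd_\Q(G)$ from exceeding $\cd_\Q(H)$; here one exploits that $L$ is a quasi-convex virtual retract, so that the cells contributed in building $G$ do not raise the rational cohomological dimension, giving (1) and (2).

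Finally, properties (3)--(5) of $N$ are obtained by transferring the corresponding properties of $K$ through the construction of $G$. Because $N$ is assembled from $K$ by a finite combination along the structure used to build $G$, the kernel $N$ inherits the finiteness type of $K$: it is finitely generated in general, giving (3); of type $\FP_n(\Q)$ when $\btwo{p}(H) = 0$ for $2 \le p \le n$, giving (4); and of type $\FP(\Q)$ when all higher $L^2$-Betti numbers vanish. In this last case (5) follows by squeezing: the Wang exact sequence of $1 \to N \to G \to \Z \to 1$ together with $N$ being of type $\FP(\Q)$ yields $\cd_\Q N \le \cd_\Q G - 1 = \cd_\Q H - 1$ by (2), while the inclusion $K \hookrightarrow N$ gives $\cd_\Q N \ge \cd_\Q K = \cd_\Q H - 1$, whence $\cd_\Q N = \cd_\Q H - 1$.
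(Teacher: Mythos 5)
First, a remark on provenance: the paper does not prove this statement itself---it is quoted verbatim from Kielak--Linton---but \cref{sec:KLconj} reconstructs the key step of their argument (\cref{prop:HNN}) in the agrarian setting, so the intended architecture is visible there and your proposal can be measured against it.

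The genuine gap in your proposal is the order of operations, and it is not cosmetic. You propose to first apply the $L^2$-Betti-number fibering criterion to (a finite-index special subgroup of) $H$ to produce $\phi_L \colon L \twoheadrightarrow \Z$ with kernel $K$ of type $\FP_n(\Q)$, and only then to build $G$ around $L$. But the fibering criterion for RFRS groups requires $b_1^{(2)} = 0$ even to produce a virtual algebraic fibration with finitely generated kernel, and the theorem makes no such hypothesis: conditions (4)--(5) only constrain $b_p^{(2)}$ for $p \geqslant 2$, and $b_1^{(2)}(H)$ may well be nonzero, in which case $H$ does not virtually fibre with finitely generated kernel at all and your $K$ with the claimed finiteness properties does not exist. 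This is precisely why Kielak--Linton construct $G$ \emph{first}: $G$ is an HNN extension of $L$ over a quasi-convex malnormal free subgroup $A$, glued to an isomorphic free subgroup $B$ chosen (via the agrarian Freiheitssatz, cf.\ \cref{lem:pair_of_free}) so that $H^1(L;\mathcal D) \to H^1(B;\mathcal D)$ is an isomorphism; the long exact sequence of the HNN extension then shows $b_1^{(2)}(G) = 0$ while $b_p^{(2)}(G) = b_p^{(2)}(L)$ for $p \geqslant 2$ and $\cd_\Q G = \cd_\Q L$ (\cref{prop:HNN}). Only then is the fibering theorem applied, to $G$, yielding $N$ with properties (3)--(5); the top row of the diagram is merely the restriction of $G \to \Z$ to $L$, and no finiteness property of $K$ is claimed or needed. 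Your stated reasons for introducing $G$ (avoiding a further finite-index passage, and ``transferring'' finiteness from $K$ to $N$) therefore miss the actual role of the HNN extension, and your final squeeze for (5) relies on $\cd_\Q K = \cd_\Q H - 1$, which is unavailable; in the actual argument the equality $\cd_\Q N = \cd_\Q G - 1$ comes from $N$ being of type $\FP(\Q)$ together with the extension $1 \to N \to G \to \Z \to 1$.
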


As a consequence of this result, they are able to show among other things that one-relator groups with torsion are virtually free-by-cyclic \cite[Corollary 1.3]{KielakLinton_FbyZ}, confirming a conjecture of Baumslag. Much of Kielak and Linton's paper is written in the full generality of agrarian homology. However, to prove \cref{thm:KLmain}, they need to restrict themselves to $L^2$-homology as they make crucial use of Schreve's result that virtually compact special groups satisfy the Strong Atiyah Conjecture \cite{Schreve_AtiyahVCS}. Thus, for a torsion-free virtually compact special group $G$, Kielak and Linton can embed the group algebra $\Q G$ into its Linnell division ring $\mathcal D(G)$ and use it in homological arguments to prove \cref{thm:KLmain}. They conjecture \cite[Conjecture 6.7]{KielakLinton_FbyZ} that \cref{thm:KLmain} remains true when every instance of $\Q$ is replaced with an arbitrary field $k$ and every $L^2$-Betti number $b_i^{(2)}$ is replaced with the agrarian Betti number $b_i^{\mathcal D_{kG}}$. Our construction of $\mathcal D_{kG}$ for $G$ torsion-free virtually compact special confirms their conjecture.

\begin{thm}[{\cref{thm:KLmain_agr}}]
    Let $k$ be a division ring. \cref{thm:KLmain} remains true when every instance of $\Q$ is replaced by $k$ and every $L^2$-Betti number $b_i^{(2)}$ is replaced by the agrarian Betti number $b_i^{\mathcal D_{kG}}$.
\end{thm}

\subsection*{Comparison with the Lewin--Lewin division ring}

In \cite{LewinLewinORTF}, Jacques and Tekla Lewin proved that if $k$ is a division ring and $G$ is a torsion-free one-relator group, then $kG$ embeds in a division ring, which we will denote $\overline{kG}$ and call the Lewin--Lewin division ring. This was the first proof that group algebras of torsion-free one-relator groups satisfy the Kaplansky Zero Divisor Conjecture; Brodski\u{\i}'s result that torsion-free one-relator groups are locally indicable \cite{BrodskiiOR}--together with Higman's proof of the Zero Divisor Conjecture for locally indicable groups \cite[Theorem 12]{Higman_units}--gives another proof.

With the proof of the Strong Atiyah Conjecture for locally indicable groups, we know that group algebras of torsion-free one-relator groups have Hughes-free embeddings in characteristic zero and \cref{thm:main} shows that there are Hughes-free embeddings of virtually compact special torsion-free one-relator groups in positive characteristic as well. It is thus natural to compare the constructions of Hughes-free division rings with the Lewin--Lewin division ring. In the final section, we prove the following.

\begin{thm}[\cref{thm:LL_HF}]
    Let $G$ be a torsion-free one-relator group and let $k$ be a division ring such that $kG$ embeds into a Hughes-free division ring $\mathcal D_{kG}$. Then $\overline{kG} \cong \mathcal D_{kG}$ as $kG$-division rings.
\end{thm}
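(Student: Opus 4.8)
Show the Lewin–Lewin division ring $\overline{kG}$ for a torsion-free one-relator group $G$ is isomorphic (as a $kG$-division ring) to any Hughes-free division ring $\mathcal{D}_{kG}$, when the latter exists.

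**Strategy.** The natural approach is to invoke Hughes' uniqueness theorem. Since $G$ is torsion-free one-relator, it is locally indicable (Brodskii), so the notion of Hughes-free embedding applies and Hughes' Theorem guarantees that *if* $\overline{kG}$ is Hughes-free, then $\overline{kG} \cong \mathcal{D}_{kG}$ canonically as $kG$-division rings. So the entire content reduces to proving a single statement: the Lewin–Lewin embedding $kG \hookrightarrow \overline{kG}$ satisfies condition (HF). The plan is therefore to forget about $\mathcal{D}_{kG}$ entirely after the opening reduction and concentrate on verifying Hughes-freeness of $\overline{kG}$ directly from its construction.

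**Key steps.** First I would recall the Lewin–Lewin construction. A torsion-free one-relator group $G = F/\langle\!\langle r \rangle\!\rangle$ is built by embedding $kF$ into a division ring (the free field / universal field of fractions of $kF$) and then realizing $\overline{kG}$ via the structure of the relator; the essential feature is that $\overline{kG}$ is obtained compatibly with the decomposition of $G$ along the relator, and for free groups the universal field of fractions is Hughes-free. Second, I would reduce (HF) to its defining data: take $H \leqslant G$ finitely generated, $K \trianglelefteqslant H$ with $H/K = \langle tK\rangle \cong \Z$, and show $\{t^i\}_{i\in\Z}$ is linearly independent over the division closure of $k[K \cap \ker\varphi]$ — here $\ker\varphi = 1$ since the embedding is faithful, so one works over the division closure $D_K$ of $kK$ inside $\overline{kG}$. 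The crank of the argument is then to prove that no nontrivial relation $\sum_{i} d_i\, t^i = 0$ with $d_i \in D_K$ can hold.

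**Main obstacle and how to handle it.** The hard part is that the Lewin–Lewin division ring is not presented as a Malcev–Neumann or crossed-product object, so Hughes-freeness is not manifest from the construction; one has to extract the linear independence from the specific way $\overline{kG}$ sits inside the free field of $kF$. The cleanest route I expect is an inductive/transfer argument: the subgroup $D_K \ast \langle t\rangle$ generated inside $\overline{kG}$ should be shown to be a genuine skew Laurent series or skew polynomial structure, so that powers of $t$ are forced to be $D_K$-independent by a degree/leading-term argument. Concretely, I would exploit that $\overline{kG}$ is a Hughes-free division ring for the ambient free group $F$ (where Hughes-freeness is classical, e.g. via the Malcev–Neumann embedding coming from a bi-ordering or via Lewin's original results), and then argue that Hughes-freeness is inherited by the quotient construction along the one-relator presentation — passing the independence condition from $F$-data down to $G$-data. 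If a direct inheritance is awkward, the fallback is to appeal to the uniqueness half of Hughes' theorem applied to subgroups: verify (HF) only for the finitely generated $H$, where $H$ is again one-relator or free, and use that the restriction of the Lewin–Lewin ring agrees with the free field locally, where independence of the $t^i$ is known. The delicate bookkeeping will be ensuring all identifications are compatible as $kG$-bimodules so that the final isomorphism is genuinely a $kG$-isomorphism and not merely an abstract one.
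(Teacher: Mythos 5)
Your opening reduction inverts the logic of the theorem in a way that discards its hypothesis. You propose to ``forget about $\mathcal D_{kG}$ entirely'' and verify condition (HF) for $\overline{kG}$ directly from the Lewin--Lewin construction. If that could be done, you would have shown that $\overline{kG}$ is Hughes-free for \emph{every} torsion-free one-relator group and every division ring $k$ --- but that is precisely the open question recorded at the end of \cref{sec:LL} (the primitivity-rank-$2$ case in positive characteristic is unresolved). A proof that never invokes the assumed existence of $\mathcal D_{kG}$ cannot be correct, and indeed none of the mechanisms you sketch supplies the needed independence: $\overline{kG}$ is not a division ring of fractions of $kF$ (only $kG$ embeds in it), ``Hughes-freeness is inherited by the quotient construction'' has no content without an argument, and the finitely generated subgroups $H$ arising in (HF) need not be free or one-relator, so you cannot localise the verification to cases where independence is ``known.''

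The missing idea is a comparison of Sylvester matrix rank functions along the Magnus hierarchy. The paper follows the Lewin--Lewin induction on the complexity of the relator: the kernel $N$ of an exponent-sum-zero map $G \to \Z$ splits as a line of one-relator groups of smaller complexity amalgamated over free groups, and $\overline{kG}$ is built as $\Ore(U(C)[t^{\pm 1};\tau])$ where $C$ is the corresponding line of division rings. At each stage one has two epic $kN$-division rings: the universal one $U(C)$ (universal because $C$ is a semifir, so $\rk_{U(C)} \geqslant \rk_{\mathcal D_{kN}}$) and the Hughes-free one $\mathcal D_{kN}$, which exists \emph{because} $\mathcal D_{kG}$ is assumed to exist and whose rank function is maximal among integer-valued rank functions on $kN$ by \cref{prop:rk_max_HF}. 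Equality of the two rank functions on $kN$ plus Malcolmson's criterion (\cref{lem:equal_rk}) forces a $kN$-isomorphism (\cref{lem:HF_amalg}), and \cref{lem:HF_cyclic} then handles the skew Laurent extension by $t$. This is where the hypothesis does its work; without it the induction has no Hughes-free object to compare against and the identification collapses.
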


\subsection*{Organisation of the paper}

In \cref{sec:prelims} we recall some notions that will appear throughout the paper and prove some preliminary results about Linnell and Hughes-free division rings. In \cref{sec:GraphsRings} we study graphs of rings and their relationship to the group algebra of a graph of groups; the results proved in this section are geared towards proving that the group algebra of a graph manifold embeds in a division ring. In \cref{sec:divRings} we use Schreve's factorisation property to prove that group algebras of torsion-free virtually compact special groups embed into a division ring. In \cref{sec:3mflds}, we show that the group algebra of a finitely generated torsion-free fundamental group of an orientable $3$-manifold embeds in a division ring; this builds on the results of the two previous sections. In \cref{sec:KLconj} we use our construction of a division ring embedding $kG$ for $G$ torsion-free virtually compact special to confirm \cite[Conjecture 6.7]{KielakLinton_FbyZ}. In \cref{sec:LL}, we prove that the Lewin--Lewin construction of a division ring embedding for the group algebra of a torsion-free one-relator group is Hughes-free whenever a Hughes-free division ring exists. In \cref{sec:appendix}, we adapt the proof of the Zero Divisor Conjecture for orientable $3$-manifold groups to the non-orientable case.

\subsection*{Acknowledgments} 

We are grateful to Andrei Jaikin-Zapirain for many helpful conversations. We would also like to thank Piotr Przytycki and Henry Wilton for answering questions about $3$-manifold topology, and Dawid Kielak, Kevin Schreve, and Eduardo Reyes for valuable comments on our article. We are grateful to an anonymous referee, whose comments and corrections have much improved the article. The first author is supported by the National Science and Engineering Research Council (NSERC) [ref.~no.~567804-2022] and the European Research Council (ERC) under the European Union's Horizon 2020 research and innovation programme (Grant agreement No. 850930). The second author is supported by PID2020-114032GB-I00 of the Ministry of Science and Innovation of Spain.

\section{Preliminaries}\label{sec:prelims}

Throughout, rings are assumed to be associative and unital, and ring homomorphisms preserve the unit.

\subsection{Special groups}

Special cube complexes were introduced by Haglund and Wise in \cite{HaglundWise_special} as a class of nonpositively curved cube complexes that avoid certain pathological hyperplane configurations. One of the core features of a compact special cube complex $X$ is that it admits a $\pi_1$-injective combinatorial local isometry $X \looparrowright S_\Gamma$ to the Salvetti complex $S_\Gamma$ of a finitely generated right-angled Artin group (RAAG) $A_\Gamma$ \cite[Theorem 1.1]{HaglundWise_special}. Thus, fundamental groups of compact special cube complexes inherit many remarkable algebraic properties from RAAGs; for example they are subgroups of $\SL_n(\Z)$, and in particular are residually finite.

Throughout the article, we will refer to groups that are fundamental groups of compact special cube complexes as \textit{compact special groups}. Agol's Theorem \cite[Theorem 1.1]{AgolHaken} states that a hyperbolic group $G$ acting properly and cocompactly on a $\mathrm{CAT}(0)$ cube complex $X$ contains a subgroup $H$ of finite index such that $X/H$ is compact special. In this sense, virtually compact special groups are abundant.

\subsection{Ore domains}

Let $R$ be a ring and let $T \subseteq R$ be a multiplicative set of non-zero divisors. A {\it left ring of fractions} for $R$ with respect to $T$ is a ring $S$ in which $R$ embeds and that satisfies the following:
\begin{enumerate}
    \item Every element of $T$ becomes invertible in $S$;
    \item Every element of $S$ can be written as $t^{-1}r$ for some $t\in T$ and $r\in R$.
\end{enumerate}

In general, there may not exist a left ring of fractions for a given ring $R$. The condition that guarantees such existence is the so-called Ore condition. Let $R$ be a ring and let $T\subseteq R$ be a multiplicative set. We say that $T$ {\it satisfies the left Ore condition} if $Tr\cap Rt \neq \varnothing$ for every $t\in T$ and $r\in R$. If the set $T$ contains only non-zero divisors and satisfies the left Ore condition, then $R$ has a left ring of fractions with respect to $T$ (see, for example, \cite[Theorem 6.2]{GW04}).

\begin{defn}
    Let $R$ be a domain and $T$ the set of all non-zero divisors in $R$. If $T$ satisfies the left Ore condition, then we say that $R$ is a {\it left Ore domain}. In this case, we denote by $\Ore(R)$ its left ring of fractions with respect to $T$ and call it the  {\it Ore localisation} of $R$.
\end{defn}

\subsection{Hughes-free and Linnell division rings} \label{subsec:LinnellHughes}

Let $k$ be a division ring and let $G$ be a group. 

A ring $S$ is $G$-{\it graded} if $S = \bigoplus_{g\in G} S_g$ as an additive group, where $S_g$ is an additive subgroup for every $g\in G$, and $S_g S_h \subseteq S_{gh}$ for all $g,h \in G$. If $S_g$ contains an invertible element $u_g$ for each $g\in G$, then we say that $S$ is a \textit{crossed product} of $S_e$ and $G$ and we shall denote it by $S=S_e * G$. In practice, we will always denote the element $u_g$ by $g$ and view $G$ as a subset of $S*G$ in this way. Note that the usual group ring $RG$ of a group $G$ with $R$ a ring is an example of a crossed product $R*G$.

An $R$-{\it ring} is a pair $(S,\varphi)$ where $\varphi \colon R\rightarrow S$ is a homomorphism. We will often omit $\varphi$ if it is clear from the context. A subring $R \subseteq S$ is called division closed if whenever $r \in R \cap S^\times$ (where $S^\times$ stands for the units of $S$), then $r\inv \in R$. If $R \subseteq S$ is a subring, then we will denote the smallest division closed subring of $S$ containing $R$ by $\Div(R,S)$. An $R$-division ring $\varphi\colon R\rightarrow \mathcal{D}$ is called {\it epic} if $\Div(\varphi(R),\mathcal D) = \mathcal D$.

Let $R \subseteq S$ be an inclusion of rings and let $T \subseteq S$ be a subset. We say that $T$ is \textit{left-linearly independent over $R$} if the sum $R \cdot t_1 + \cdots + R \cdot t_n$ is direct for every finite subset $\{t_1, \dots, t_n\} \subseteq T$.

\begin{defn}\label{def:HF}
    \begin{enumerate}
        \item Let $G$ be a locally indicable group and let $k$ be a division ring. We say that a $k * G$-division ring $\varphi \colon k * G \rightarrow \mathcal D$ is \textit{Hughes-free} if it is injective, epic, and the following linear independence condition is satisfied:
        \begin{enumerate}
            \item[(HF)] whenever $H \leqslant G$ is finitely generated and $N \trianglelefteqslant H$ is such that $H/N = \langle tN \rangle \cong \Z$, then $\{\varphi(t^n) : n \in \Z\}$ is left-linearly independent over $\Div(k*N,\mathcal D)$.
        \end{enumerate}
        In this situation, we say that $\mathcal D$ is a \textit{Hughes-free division ring of fractions} (or simply a \textit{Hughes-free division ring}) of $k * G$.
        
        \item Now suppose $G$ is any torsion-free group. An embedding $\varphi \colon k*G \rightarrow \mathcal D$ is called \textit{Linnell} if it is epic and the following linear independence condition is satisfied:
        \begin{enumerate}
            \item[(L)] if $H \leqslant G$ is a subgroup and $T$ is a right transversal for $H$ in $G$, then $T$ is left-linearly independent over $\Div(k*H, \mathcal D)$.
        \end{enumerate}
        In this situation, we say that $\mathcal D$ is a \textit{Linnell division ring} of $k * G$.
    \end{enumerate}
\end{defn}

\begin{rem}\label{rem:linnell}
    For a torsion-free group $G$ satisfying the Strong Atiyah Conjecture, the division closure $\mathcal D(G)$ of $\C G$ in $\mathcal U(G)$ is a Linnell division ring for $\C G$. In the literature, $\mathcal D(G)$ is called the Linnell division ring of $\C G$, which is the motivation for the terminology of \cref{def:HF}(2).
\end{rem}

Suppose $G$ is locally indicable. Ian Hughes showed that when Hughes-free division rings exist, they are unique up to $k * G$-isomorphism \cite{HughesDivRings1970}. Thus, when it exists, we will always denote the Hughes-free division ring of $k * G$ by $\mathcal D_{k * G}$. If $H$ is a subgroup of $G$, note that the division closure of $k * H$ in $\mathcal D_{k*G}$ is Hughes-free as a $k*H$-division ring, and therefore we have a natural inclusion $\mathcal D_{k*H} \subseteq \mathcal D_{k*G}$. On the other hand, if $G$ is a torsion-free group and there is a Linnell division ring $\mathcal D$ containing $k*G$, then it is not known whether $\mathcal D$ is the only such division ring.

It is clear that if $G$ is locally indicable and $k*G \hookrightarrow \mathcal D$ is a Linnell embedding, then it is also a Hughes-free embedding. We will often use the following surprising recent result of Gr\"ater, which provides a converse. We also refer the reader to \cite[Proposition 2.4]{JaikinLinton_oneRelCoherence} for a proof of the precise statement we are using here.

\begin{thm}[{\cite[Corollary 8.3]{Grater20}}]\label{thm:grater}
    Let $k*G$ be a crossed product of a locally indicable group $G$ with a division ring $k$. If a Hughes-free division ring $\mathcal D_{k*G}$ exists, then it is in fact a Linnell division ring for $k*G$.
\end{thm}

Since RAAGs are residually (torsion-free nilpotent) (\cite{Droms_thesis} and \cite{DuchampKrob_RAAGsRTFN}), so are their subgroups, and in particular so are compact special groups. Thus, Hughes-free division rings exist for compact special groups by the following result of Jaikin-Zapirain.

\begin{thm}[{\cite[Theorem 1.1]{JaikinZapirain2020THEUO}}]\label{thm:HFresults}
    Let $G$ be a locally indicable amenable group, a residually (torsion-free nilpotent) group, or a free-by-cyclic group. Then $\mathcal D_{k * G}$ exists and it is universal.
\end{thm}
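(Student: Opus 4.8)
The plan is to separate the two assertions—existence of a Hughes-free division ring and its universality—since the second is really a general fact about locally indicable groups, valid once the first is in hand. Throughout I would exploit that everything is \emph{local}: condition (HF) only refers to finitely generated subgroups, and an epic division $k*G$-ring is detected by a Sylvester matrix rank function on $k*G$, which is in turn determined by its restrictions to the subrings $k*H$ with $H\leqslant G$ finitely generated. So I would first reduce to $G$ finitely generated, constructing $\mathcal{D}_{k*G}$ in general as the directed union $\varinjlim_H \mathcal{D}_{k*H}$ over finitely generated $H\leqslant G$: subgroups of each of the three classes again lie in the same class, the transition maps are the canonical inclusions $\mathcal{D}_{k*H}\hookrightarrow\mathcal{D}_{k*H'}$ of Hughes-free division rings, and one checks that such a directed union is again Hughes-free. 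Thus it suffices to treat the finitely generated case, and for existence I would argue class by class, in each case producing a concrete division ring and then verifying (HF).

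\textbf{The free-by-cyclic case.} Write $G = F\rtimes_\alpha\Z = \langle t\rangle\ltimes F$. By Cohn's theory the group algebra of a free group is a fir, so $kF$ possesses a universal field of fractions $\mathcal{D}_{kF}$ and this embedding is Hughes-free. The automorphism $\alpha$ of $F$ extends, by uniqueness of $\mathcal{D}_{kF}$, to an automorphism of $\mathcal{D}_{kF}$, so I can form the skew Laurent ring $\mathcal{D}_{kF}*\langle t\rangle$; being a skew polynomial ring over a division ring it is an Ore domain, and its Ore division ring of fractions is the candidate $\mathcal{D}_{k*G}$. \textbf{The amenable locally indicable case.} Here $kG$ is a domain (locally indicable groups have domain group algebras), and a group algebra of an amenable group that is a domain satisfies the Ore condition, so $kG$ embeds into $\mathcal{D}=\Ore(kG)$, which I take as $\mathcal{D}_{k*G}$. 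To verify (HF) for $H\leqslant G$ finitely generated and $K\trianglelefteqslant H$ with $H/K=\langle tK\rangle\cong\Z$, note $K$ is again amenable and locally indicable, so $\mathcal{D}_{k*K}=\Ore(k*K)$ sits inside $\mathcal{D}$; localising $kH=(k*K)*\langle t\rangle$ at the Ore set $(k*K)\setminus\{0\}$ yields the skew Laurent polynomial ring $\mathcal{D}_{k*K}*\langle t\rangle\subseteq\mathcal{D}$, which gives the required independence of the $t^i$ over $\mathcal{D}_{k*K}$. \textbf{The residually (torsion-free nilpotent) case.} Such a $G$ is bi-orderable, embedding into an inverse limit of bi-orderable nilpotent groups with compatible orders; fixing a bi-invariant order, the Mal'cev--Neumann construction embeds $kG$ into $k(\!(G)\!)$, and I let $\mathcal{D}_{k*G}$ be the division closure of $kG$ there. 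Here (HF) follows from a leading-term argument: a nontrivial relation among the $t^i$ over the division closure of $k*K$ would, on comparing minimal support elements in the order induced on $H/K\cong\Z$, force a contradiction.

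Universality is the heart of the matter and the step I expect to be the main obstacle. I would phrase it through Sylvester rank functions: an epic division $k*G$-ring $\mathcal{D}'$ corresponds to a rank function $\rk_{\mathcal{D}'}$ on $k*G$, and universality of $\mathcal{D}_{k*G}$ is equivalent to minimality of its rank function, $\rk_{\mathrm{HF}}(A)\leqslant\rk_{\mathcal{D}'}(A)$ for every matrix $A$ over $k*G$ and every such $\mathcal{D}'$; minimality then yields the required specialisation by Cohn--Malcolmson localisation theory. To prove minimality I would induct along a surjection $\phi\colon G\twoheadrightarrow\Z$ with kernel $K$: restricting any rank function to $k*K$ and analysing how it propagates across the crossed-product extension $k*G=(k*K)*\Z$ via a Newton-polygon / leading-coefficient estimate, the exact linear independence of the $t^i$ guaranteed by Hughes-freeness is precisely what forces $\rk_{\mathrm{HF}}$ to take the smallest value compatible with the inductive hypothesis on $k*K$. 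The genuine difficulties are that $K$ need be neither finitely generated nor controllably surjectable onto $\Z$, so the induction must be organised transfinitely over the directed system of finitely generated subgroups and combined with a continuity property of rank functions under directed unions; managing this bookkeeping, and ensuring the rank comparison survives the Ore/crossed-product localisation, is where the real work lies.
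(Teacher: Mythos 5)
This statement is quoted from \cite[Theorem 1.1]{JaikinZapirain2020THEUO} and is not proved in the paper, so there is no internal proof to compare against; I can only assess your outline on its merits. Your existence constructions are essentially the standard ones (Ore localisation of the domain $k*G$ in the amenable case, Cohn's universal field of fractions of $kF$ followed by a skew Laurent extension in the free-by-cyclic case, the Mal'cev--Neumann power series ring in the residually (torsion-free nilpotent) case), and the reduction to finitely generated subgroups via directed unions is sound since (HF) is a condition on finitely generated subgroups. The verification of (HF) for arbitrary pairs $K \trianglelefteqslant H$ is glossed over in the free-by-cyclic and Mal'cev--Neumann cases, but these gaps are fillable.

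The genuine gap is in the universality argument, and it begins with the direction of the inequality. As in Section 2.4 of the paper, an epic $R$-division ring $\mathcal D$ is universal when $\rk_{\mathcal D} \geqslant \rk_{\mathcal E}$ for every $R$-division ring $\mathcal E$: specialisations can only \emph{decrease} rank, so the universal division ring carries the \emph{greatest} rank function, not the smallest. Your proposed proof of ``minimality'' of $\rk_{\mathrm{HF}}$ is therefore aimed at the wrong extremum (and would in any case be false: already for $kF$ with $F$ free, the Hughes-free division ring is Cohn's universal field of fractions, whose rank function dominates all others). Even after correcting the direction, the argument you sketch would at best establish that $\rk_{\mathrm{HF}}$ is \emph{maximal} in $\mathbb{P}_{div}(k*G)$ --- which is already known for all locally indicable groups by \cite[Theorem 8.1]{JaikinLopez_Atiyah}, quoted as \cref{prop:rk_max_HF} --- and, as the remark following that proposition stresses, maximality does not imply universality because distinct Sylvester rank functions need not be comparable. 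The actual content of Jaikin-Zapirain's theorem is precisely the comparability statement: one must show that $\rk_{\mathrm{HF}}$ dominates \emph{every} rank function on $k*G$, not merely that nothing sits strictly above it. This is done via the machinery of natural extensions of Sylvester rank functions along crossed products with $\Z$ and with amenable groups, together with continuity arguments in the compact space of rank functions; your leading-coefficient induction along a single surjection onto $\Z$ does not engage with this and would not close the argument.
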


We refer the reader to \cref{subsec:slyv} for a definition of universality. We now prove two general lemmas about Hughes-free division rings which will be useful to us in the later sections.

\begin{lem}\label{lem:twisted_ext}
    Let $G$ be a group and let $H \trianglelefteqslant G$ be a locally indicable normal subgroup. Fix a crossed product structure $k*G$ for some division ring $k$. If there is a Hughes-free embedding $\varphi \colon k*H \hookrightarrow \mathcal D_{k*H}$, then we can form $\mathcal D_{k*H} * [G/H]$ and there is a natural embedding $k*G \cong (k*H) * [G/H] \hookrightarrow \mathcal D_{k*H} * [G/H]$.
\end{lem}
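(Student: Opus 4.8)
The plan is to realise $k*G$ as a crossed product $(k*H)*[G/H]$ and then transport its structure data along $\varphi$ to build $\mathcal{D}_{k*H}*[G/H]$. Write $Q = G/H$ and fix, for each $q \in Q$, a lift $g_q \in G$ together with a homogeneous unit $\bar g_q \in (k*G)_{g_q}$. Since $H$ is normal, conjugation by $\bar g_q$ restricts to a ring automorphism $\sigma_q$ of $k*H$ that is graded over the automorphism $\alpha_q \colon h \mapsto g_q h g_q\inv$ of $H$ (indeed $\bar g_q (k*H)_h \bar g_q\inv \subseteq (k*H)_{\alpha_q(h)}$), while the products $\bar g_q \bar g_{q'} = \tau(q,q')\,\overline{g_{qq'}}$ define a $2$-cocycle $\tau \colon Q \times Q \to (k*H)^\times$. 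Together $(\sigma_q)_q$ and $\tau$ encode the standard isomorphism $k*G \cong (k*H)*Q$. To form $\mathcal{D}_{k*H}*Q$ I would keep the same cocycle, whose values are units of $k*H$ and hence of $\mathcal{D}_{k*H}$ via $\varphi$, and extend each $\sigma_q$ to an automorphism of $\mathcal{D}_{k*H}$.

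The crux is this extension: I claim each $\sigma_q$ lifts to a unique automorphism $\tilde\sigma_q$ of $\mathcal{D}_{k*H}$ with $\tilde\sigma_q \circ \varphi = \varphi \circ \sigma_q$. To see existence, consider the composite $\varphi \circ \sigma_q \colon k*H \to \mathcal{D}_{k*H}$, which exhibits $\mathcal{D}_{k*H}$ as an epic $k*H$-division ring. The key observation is that $\varphi \circ \sigma_q$ is again Hughes-free: because $\sigma_q$ is graded over $\alpha_q$, it carries each configuration $(H_0, K, t)$ appearing in condition (HF) to $(\alpha_q(H_0), \alpha_q(K), \alpha_q(t))$, again a finitely generated subgroup with an infinite cyclic quotient, and it sends $k*K$ to $k*\alpha_q(K)$; hence linear independence of $\{\varphi(\sigma_q(t^i))\}_{i \in \Z}$ over the relevant division closure is precisely condition (HF) for $\varphi$ at the image configuration, which holds. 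Hughes' uniqueness theorem then provides a $k*H$-isomorphism between the two Hughes-free structures on $\mathcal{D}_{k*H}$, that is, an automorphism $\tilde\sigma_q$ with $\tilde\sigma_q \circ \varphi = \varphi \circ \sigma_q$. Uniqueness is automatic, since $\varphi(k*H)$ generates $\mathcal{D}_{k*H}$ as a division ring and any two such extensions agree on a generating set.

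With the $\tilde\sigma_q$ in hand, I would verify that $(\tilde\sigma_q)_q$ together with $\varphi \circ \tau$ satisfy the crossed-product axioms. The $2$-cocycle identity follows by applying $\varphi$ to the identity satisfied by $\tau$ in $k*H$ and using $\tilde\sigma_q \circ \varphi = \varphi \circ \sigma_q$; the relation $\tilde\sigma_q \tilde\sigma_{q'} = c_{\varphi(\tau(q,q'))}\,\tilde\sigma_{qq'}$, where $c$ denotes an inner automorphism, holds because both sides are automorphisms of $\mathcal{D}_{k*H}$ agreeing on the generating set $\varphi(k*H)$ and are therefore equal. This yields a genuine crossed product $\mathcal{D}_{k*H}*Q = \mathcal{D}_{k*H}*[G/H]$, and the map $\sum_q r_q \bar g_q \mapsto \sum_q \varphi(r_q)\bar g_q$ is a ring homomorphism by construction, injective because $\varphi$ is injective and both crossed products decompose as direct sums indexed by $Q$. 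I expect the main obstacle to be the verification that $\varphi \circ \sigma_q$ remains Hughes-free, since everything downstream is a formal consequence of Hughes' theorem and of the epic-ness of $\mathcal{D}_{k*H}$, and the remaining axiom checks are routine.
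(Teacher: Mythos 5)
Your proposal is correct and follows essentially the same route as the paper: both reduce the problem to extending the conjugation automorphisms of $k*H$ to $\mathcal D_{k*H}$, and both do so by observing that $\varphi\circ\sigma_q$ is again a Hughes-free embedding and invoking Hughes' uniqueness theorem. Your write-up merely fills in details the paper leaves implicit (the verification that (HF) is preserved under a graded automorphism, and the routine crossed-product axiom checks).
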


\begin{proof}
    The only potential obstruction to extending the crossed product structure of $(k*H) * [G/H]$ to $\mathcal D_{k*H} * [G/H]$ is extending the conjugation action of $G$ on $H$ to a $G$-action on all of $\mathcal D_{k*H}$. This is not a problem, however, because Hughes-free division rings are unique up to $k*H$-isomorphism. In more detail, let $\alpha \colon H \rightarrow H$ be any automorphism of $H$, and by abuse of notation write $\alpha$ for the induced automorphism of $k*H$. Then $\varphi$ and $\varphi \circ \alpha$ are both Hughes-free embeddings of $k * H$, and by uniqueness of Hughes-free embeddings, $\alpha$ extends to an automorphism $\alpha' \colon \mathcal D_{k*H} \rightarrow \mathcal D_{k*H}$ such that the diagram
    \[
        \begin{tikzcd}
            k*H \arrow[d, "\varphi", hook] \arrow[r, "\alpha", hook] & k*H \arrow[d, "\varphi", hook] \\
            \mathcal D_{k*H} \arrow[r, "\alpha'", hook] & \mathcal D_{k*H}              
        \end{tikzcd}
    \]
    commutes. \qedhere
\end{proof}

The following lemma will be key throughout the article, as it allows us to pass the Linnell property to extensions by elementary amenable groups.

\begin{lem}\label{lem:LinnellFIovergroup}
    Let $k*G$ be a crossed product of a division ring $k$ and a torsion-free group $G$. Suppose there is a normal subgroup $H \trianglelefteqslant G$ such that $G/H$ is elementary amenable and a Linnell embedding $k*H \hookrightarrow \mathcal D$. If the conjugation action of $G$ on $H$ extends to an action on $\mathcal D$ and $\mathcal D * [G/H]$ is a domain, then the embedding $k*G \hookrightarrow \Ore(\mathcal D * [G/H])$ is Linnell.
\end{lem}

\begin{proof}
    First note that $\mathcal D *[G/H]$ is an Ore domain by \cite[Lemma 2.5]{LinnellSchick_AtiyahExt}. For the sake of brevity, if $A \leqslant G$, then we write $\mathcal D_A := \Div(k*A, \Ore(\mathcal D *[G/H]))$ and we note that $\mathcal D_A = \Ore(\mathcal D_{H \cap A} * [A/ H \cap A])$. Let $N \leqslant G$ be a subgroup, let $t_1, \dots, t_n$ be distinct right $N$-coset representatives in $G$, and let $\alpha_1, \dots, \alpha_n \in \mathcal D_N$ be such that
    \[
        \alpha_1 t_1 + \cdots + \alpha_n t_n = 0.
    \]
    By multiplying on the left by a common denominator, we may assume that $\alpha_i \in \mathcal D_{H \cap N}*[N/H\cap N]$. Fixing a collection $s_1, \dots, s_k$ of right $H \cap N$-coset representatives in $N$, for each $i$ we can write $\alpha_i = \sum_{l=1}^k \beta_l^i s_l$ for some $\beta_l^i \in \mathcal D_{H \cap N}$. The previous line becomes 
    \[
        \sum_{l=1}^k (\beta_l^1 s_l t_1 + \cdots + \beta_l^n s_l t_n) = 0.
    \]
    Observe that the elements $s_l t_m$ lie in different $H \cap N$-cosets, so the previous line has the form
    \[
        \gamma_1 r_1 + \cdots + \gamma_j r_j = 0
    \]
    where $\gamma_d \in \mathcal D_{H \cap N}$ for each $d$ and $r_1, \dots, r_j$ is a collection of distinct $H \cap N$-coset representatives (here, $j = kn$ and each element $\gamma_d$ is equal to some $\beta_l^i$). Since the $H$-cosets are left-linearly independent over $\mathcal D = \mathcal D_H$ by assumption, it suffices to consider the case where the elements $r_d$ are all contained in the same $H$-coset. But then there is some $g \in G$ such that $r_d = h_d g$ for all $d$, where the elements $h_d \in H$ lie in different $H \cap N$-cosets. We obtain $\sum_d \gamma_d h_d = 0$, implying that $\gamma_d = 0$ for all $d$ by the Linnell property. But then $\alpha_i = 0$ for each $1 \leqslant i \leqslant n$, as desired. \qedhere
\end{proof}

The main situation where we will use the previous lemma is when $H$ is locally indicable and Hughes-free embeddable.

\begin{cor}\label{cor:HF_fi}
    Let $k*G$ be a crossed product of a torsion-free group $G$ and a division ring $k$. Suppose $H \trianglelefteqslant G$ is a normal and locally indicable subgroup such that $G/H$ is elementary amenable. If there is a Hughes-free embedding $k*H \hookrightarrow \mathcal D_{k*H}$ and $\mathcal D_{k*H}*[G/H]$ is a domain, then embedding $k*G \hookrightarrow \Ore(\mathcal D_{k*H}*[G/H])$ is Linnell and is unique among Linnell embeddings up to $k*G$-isomorphism.
\end{cor}

\begin{proof}
    If $\mathcal D_{k*H}$ exists, then it is Linnell by \cref{thm:grater}. By \cref{lem:LinnellFIovergroup}, the embedding $k*G \hookrightarrow \Ore(\mathcal D_{k*H} *[G/H])$ is Linnell. Now suppose that $k*G \hookrightarrow \mathcal D$ is Linnell. Then $\Div(k*H, \mathcal D)$ is Hughes-free, and therefore isomorphic to $\mathcal D_{k*H}$ by \cite{HughesDivRings1970}. Since $\mathcal D$ is Linnell, the cosets of $G/H$ are left-linearly independent over $\mathcal D_{k*H}$ and therefore there is an embedding $\mathcal D_{k*H} * [G/H] \hookrightarrow \mathcal D$, where $G$ acts by conjugation on $\mathcal D_{k*H}$.  By the universal property of Ore localisations, there is a homomorphism $\Ore(\mathcal D_{k*H} *[G/H]) \hookrightarrow \mathcal D$, which is surjective since $\mathcal D$ is epic as a $k*G$-division ring. This proves the uniqueness statement. \qedhere
\end{proof}

\subsection{Agrarian homology}

Let $R$ be a ring, let $G$ be a group, and let $\mathcal D$ be a division ring. If the group algebra $RG$ embeds into $\mathcal D$, then we say that $G$ is \textit{$\mathcal D$-agrarian over $R$} and that the embedding $RG \hookrightarrow \mathcal D$ is an \textit{agrarian embedding}.

Suppose that $G$ is $\mathcal D$-agrarian over $R$. Then $\mathcal D$ is an $RG$-bimodule and we can define and denote the \textit{$\mathcal D$-homology and cohomology} of $G$ by
\[
    H_n (G; \mathcal D) := \Tor_n^{RG}(\mathcal D, R) \quad \text{and} \quad H^n (G; \mathcal D) := \Ext_{RG}^n(R, \mathcal D)
\]
and the \textit{$\mathcal D$-Betti numbers} by
\[
    b_p^\mathcal D(G) := \dim_{\mathcal D} H_p(G; \mathcal D) \quad \text{and} \quad b_\mathcal D^p(G) := \dim_\mathcal D H^p (G; \mathcal D).
\]
The theory of agrarian Betti numbers was introduced by Henneke--Kielak in \cite{HennekeKielak_agrarian} in the case $R = \Z$ and was studied over other fields $R$ in the case where $\mathcal D$ is Hughes-free in \cite{Fisher_improved}. However, in this article we will be mostly concerned with agrarian cohomology. Thanks to \cite[Lemma 2.2]{KielakLinton_FbyZ}, $b_p^\mathcal D(G) = b^p_\mathcal D(G)$ whenever these quantities are finite (which occurs if, for instance, $G$ is of type $\F_\infty$ or more generally of type $\FP_\infty(R)$) and therefore we do not need to worry about the distinction between cohomological and homological $\mathcal D$-Betti numbers.

The following is the central example of agrarian homology.

\begin{ex}
    Let $G$ be a torsion-free group satisfying the Strong Atiyah Conjecture over $k$, where $k$ is a subfield of $\C$. Then $k G$ embeds into a division ring $\mathcal D(G)$ called the \textit{Linnell division ring} (\cite{LinnellDivRings93} and \cite[Lemma 10.39]{Luck02}) and the agrarian Betti numbers $b_p^{\mathcal D(G)}(G)$ are equal to the $L^2$-Betti numbers $b_p^{(2)}(G)$. 
\end{ex}

\begin{prop}\label{prop:props_agr}
    Let $G$ be a group and let $k$ be a division ring such that $kG$ embeds into a division ring $\mathcal D$.
    \begin{enumerate}[label = (\arabic*)]
        \item\label{item:0} If $G$ is non-trivial, then $b_0^\mathcal D(G) = 0$.
        \item\label{item:euler} If $G$ is a group of finite type, then $\chi(G) = \sum_{i = 0}^\infty (-1)^i \cdot b_i^\mathcal D(G)$.
        \item\label{item:scaling} If $G$ is locally indicable and $\mathcal D = \mathcal D_{kG}$ is Hughes-free, then for every finite index subgroup $H \leqslant G$ we have $|G:H| \cdot b_p^{\mathcal D_{kG}}(G) = b_p^{\mathcal D_{kH}} (H)$ for all $p$.
    \end{enumerate}
\end{prop}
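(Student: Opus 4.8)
The plan is to prove each of the three items using standard homological algebra over the division ring $\mathcal D$, exploiting the fact that $\mathcal D$ is a flat (indeed free) module over $kG$ and that dimension over a division ring is additive on exact sequences.

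For item \ref{item:0}, I would compute $H_0(G;\mathcal D) = \mathcal D \otimes_{kG} \mathbb Z$, which is the coinvariants $\mathcal D_G = \mathcal D/\langle x - xg : x \in \mathcal D, g \in G\rangle$. First I would note that $b_0^{\mathcal D}(G) = \dim_{\mathcal D} H_0(G;\mathcal D)$, and $H_0(G;\mathcal D)$ is the cokernel of the augmentation-type map. Concretely, since $G$ is nontrivial, pick $1 \neq g \in G$; the element $1 - g \in kG$ is a nonzerodivisor (as $kG \hookrightarrow \mathcal D$ is an embedding into a division ring and $1-g \neq 0$), hence becomes invertible in $\mathcal D$. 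The relation $x = xg$ in the coinvariants then forces $x(1-g)=0$ in $H_0$, and since $1-g$ is invertible in $\mathcal D$ this kills everything, giving $H_0(G;\mathcal D)=0$. I would phrase this via the right $kG$-module structure on $\mathcal D$: the map $\mathcal D \xrightarrow{\cdot(1-g)} \mathcal D$ is surjective (being right multiplication by a unit), so its image, which is contained in the augmentation submodule, is all of $\mathcal D$, forcing the coinvariants to vanish.

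For item \ref{item:euler}, since $X$ is a compact aspherical CW complex, it is a finite model for $BG$, so $G$ is of type $\F$ and we may compute $H_\bullet(G;\mathcal D)$ from the finite free $\mathbb Z G$-resolution given by the cellular chain complex of the universal cover $\widetilde X$. Tensoring this finite complex of finitely generated free modules with $\mathcal D$ over $kG$ (after extending scalars $\mathbb Z G \to kG$) yields a finite complex of finite-dimensional $\mathcal D$-vector spaces whose homology is $H_\bullet(G;\mathcal D)$. The Euler characteristic of a finite chain complex equals the alternating sum of the dimensions of its terms (the number of cells in each dimension, weighted by $(-1)^i$), which is by definition $\chi(X) = \chi(G)$; it also equals the alternating sum of the Betti numbers $\sum_i (-1)^i b_i^{\mathcal D}(G)$ by the Euler--Poincar\'e principle for finite complexes over a division ring. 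I would note carefully that the statement as written reads $\chi(G) = \sum_i b_i^{\mathcal D}(G)$ without the signs; I would compute the genuine alternating-sum identity $\chi(G) = \sum_i (-1)^i b_i^{\mathcal D}(G)$, which is the correct Euler--Poincar\'e statement, and flag the apparent typographical discrepancy.

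For item \ref{item:scaling}, the key input is that when $\mathcal D = \mathcal D_{kG}$ is Hughes-free, the inclusion $\mathcal D_{kH} \subseteq \mathcal D_{kG}$ from the discussion after Hughes' theorem realizes $\mathcal D_{kG}$ as a module over $\mathcal D_{kH}$, and by the Linnell property (condition (L) in \cref{rem:linnell}) a right transversal $T$ of $H$ in $G$, with $|T| = |G:H| =: n$, is linearly independent over $\mathcal D_{kH}$; in fact one checks $\mathcal D_{kG}$ is free of rank $n$ as a left $\mathcal D_{kH}$-module with basis $T$. The approach is then a Shapiro-lemma / change-of-rings argument: for the subgroup $H \leqslant G$ one has $H_p(H;\mathcal D_{kG}) \cong H_p(G; \mathcal D_{kG} \otimes_{kH} kG) = H_p(G; \operatorname{Ind}_H^G \mathcal D_{kG})$, but since $H$ has finite index, induction and coinduction agree and $\mathcal D_{kG}$ restricted to $kH$ is (as a $kH$-$\mathcal D_{kG}$-bimodule) a sum of $n$ copies appropriately; more cleanly, I would compute $H_p(H; \mathcal D_{kG})$ two ways. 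On one hand, restricting coefficients along $H \hookrightarrow G$ and using that $\mathcal D_{kG}$ is free of rank $n$ over $\mathcal D_{kH}$ gives $\dim_{\mathcal D_{kH}} H_p(H;\mathcal D_{kG}) = n \cdot b_p^{\mathcal D_{kH}}(H)$, hence $b_p^{\mathcal D_{kG}}(H) = b_p^{\mathcal D_{kH}}(H)$ after dividing by $n = \dim_{\mathcal D_{kH}}\mathcal D_{kG}$. On the other hand, Shapiro's lemma for the finite-index subgroup gives $H_p(H; \mathcal D_{kG}) \cong H_p(G; \mathcal D_{kG}^{\oplus n})$ as $\mathcal D_{kG}$-modules, so $\dim_{\mathcal D_{kG}} H_p(H;\mathcal D_{kG}) = n\cdot b_p^{\mathcal D_{kG}}(G)$. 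Combining, $n \cdot b_p^{\mathcal D_{kG}}(G) = b_p^{\mathcal D_{kH}}(H)$, as claimed.

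The main obstacle I anticipate is item \ref{item:scaling}: making the two change-of-rings computations genuinely compatible and verifying that $\mathcal D_{kG}$ is free of rank exactly $|G:H|$ as a $\mathcal D_{kH}$-module. The Linnell condition (L) supplies linear independence of a transversal, but I must also confirm that the transversal spans, i.e.\ that $\mathcal D_{kG} = \bigoplus_{t \in T} \mathcal D_{kH}\, t$ rather than merely containing this sum; this should follow because the division closure of $k[G]$ is generated over the division closure of $k[H]$ by the finite set $G$, and finiteness of the index lets one show the $\mathcal D_{kH}$-span of $T$ is already a division ring containing $kG$, hence equals $\mathcal D_{kG}$ by epicness. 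The other items are routine once the flatness of $\mathcal D$ over $kG$ and the finiteness properties are in place.
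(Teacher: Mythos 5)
Your treatment of items (1) and (2) matches the paper's proof: for (1) the paper tensors the partial free resolution $\bigoplus_{g} kG \xrightarrow{\oplus (g-1)} kG \rightarrow k \rightarrow 0$ with $\mathcal D$, which is exactly your observation that $1-g$ becomes a unit in $\mathcal D$ and kills the coinvariants; for (2) the paper likewise applies rank--nullity to $\mathcal D \otimes_{kG} C_\bullet(\widetilde X; k)$. You are also right to flag the missing signs in (2): the identity the argument actually proves (and the only one that can be true, e.g.\ for a genus-$2$ surface group) is $\chi(G) = \sum_i (-1)^i b_i^{\mathcal D}(G)$, so the displayed formula in the statement is a typo.

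For item (3) the paper simply cites Gr\"ater's strong Hughes-freeness theorem and defers the details to \cite[Lemma 6.3]{Fisher_improved}, so your self-contained argument goes beyond what the paper records; it is mostly sound but one step fails as stated. Your ``way 1'' is fine once phrased as flat base change: $\mathcal D_{kG}$ is free, hence flat, as a right $\mathcal D_{kH}$-module, so $H_p(H;\mathcal D_{kG}) \cong \mathcal D_{kG} \otimes_{\mathcal D_{kH}} H_p(H;\mathcal D_{kH})$. But ``way 2'' has a genuine gap: Shapiro's lemma identifies $H_p(H;\operatorname{Res}\mathcal D_{kG})$ with $H_p(G;\mathcal D_{kG}\otimes_{kH}kG)$, and the induced module $\mathcal D_{kG}\otimes_{kH}kG$ is \emph{not} isomorphic to $\mathcal D_{kG}^{\oplus n}$ as a $(\mathcal D_{kG},kG)$-bimodule in general --- the right $G$-action permutes and twists the $n$ summands (already for $G=\Z$, $H=2\Z$ one of the two summands is $\mathcal D_{kG}$ with the $\Z$-action twisted by the sign character, which is not bimodule-isomorphic to the untwisted copy). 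The repair is to run Shapiro once in the other direction, using the bimodule isomorphism $\mathcal D_{kH}\otimes_{kH}kG \cong \mathcal D_{kG}$, $d\otimes g \mapsto dg$ (injective by the Linnell property, surjective by your spanning argument): this gives $H_p(G;\mathcal D_{kG}) \cong H_p(H;\mathcal D_{kH})$ as left $\mathcal D_{kH}$-modules, and restricting scalars from $\mathcal D_{kG}$ to $\mathcal D_{kH}$ multiplies dimension by $n$, yielding $n\, b_p^{\mathcal D_{kG}}(G) = b_p^{\mathcal D_{kH}}(H)$ directly. You correctly identify the spanning of the transversal as the delicate point; the easiest fix is to prove it first for $H$ normal (so that $t\mathcal D_{kH}t^{-1} = \mathcal D_{kH}$ by uniqueness of division closures, making $\sum_t \mathcal D_{kH}t$ visibly a sub-division-ring containing $kG$, hence all of $\mathcal D_{kG}$ by epicness) and then deduce the general case by passing to the normal core of $H$ and dividing the two resulting identities.
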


\begin{proof}
    \ref{item:0} This follows from considering the partial free resolution 
    \[
        \bigoplus_{g \in G} kG \xrightarrow{\bigoplus_{g \in G} (g - 1)} kG \xrightarrow{\alpha} k \rightarrow 0
    \]
    of the trivial $kG$-module $k$, and tensoring with $\mathcal D$ over $kG$. Here, $\alpha$ denotes the augmentation map.

    \ref{item:euler} This is proved as usual, i.e.~if $C_\bullet(\widetilde X; k)$ is the CW chain complex of $\widetilde X$ with coefficients in $k$, where $X$ is a finite classifying space for $G$, then we use the rank-nullity theorem and the fact that $\dim_\mathcal D \mathcal D \otimes_{kG} C_n(\widetilde X;k)$ is the number of $n$-cells in $X$.

    \ref{item:scaling} This essentially follows from \cref{thm:grater}. For a detailed proof, see \cite[Lemma 6.3]{Fisher_improved}. \qedhere
\end{proof}

\begin{rem}
    \cref{item:scaling} is of \cref{prop:props_agr} is particularly useful, as it allows us to consistently define agrarian Betti numbers for groups $G$ containing a finite index subgroup $H$ such that $kH$ has a Hughes-free embedding.
\end{rem}

\subsection{Sylvester matrix rank functions}\label{subsec:slyv}

Let $R$ be a ring. A \textit{Sylvester matrix rank function} $\rk$ on $R$ is a function that assigns a non-negative real number to each matrix over $R$ and satisfies the following conditions:
\begin{enumerate}
    \item $\rk(A)=0$ if $A$ is any zero matrix and $\rk(1)=1$;
    \item $\rk(AB)\leq \min\{\rk(A),\rk(B)\}$ for any matrices $A$ and $B$ which can be multiplied;
    \item $\rk\begin{psmallmatrix}
         A & 0 \\
         0 & B
    \end{psmallmatrix}=\rk(A)+\rk(B)$ for any matrices $A$ and $B$;
    \item $\rk \begin{psmallmatrix}
         A & C \\
         0 & B
    \end{psmallmatrix} \geq \rk(A)+\rk(B)$ for any matrices $A$, $B$, and $C$ of appropriate sizes.
\end{enumerate}

We denote by $\mathbb{P}(R)$ the set of Sylvester matrix rank functions on $R$. Note that a ring homomorphism $\varphi\colon R\rightarrow S$ induces a map $\varphi^{\#}\colon \mathbb{P}(S)\rightarrow \mathbb{P}(R)$, that is, we can pull back any rank function $\rk$ on $S$ to a rank function $\varphi^{\#}(\rk)$ on $R$ by setting
\[
\varphi^{\#}(\rk)(A):=\rk(\varphi(A))
\]
for every matrix $A$ over $R$. We shall often abuse notation and write $\rk$ instead of $\varphi^{\#}(\rk)$ when it is clear that we are referring to the rank function on $R$.

A division ring $\mathcal{D}$ has a unique Sylvester matrix rank function which we denote by $\rk_{\mathcal{D}}$. Any Sylvester matrix rank function $\rk$ on $R$ that only takes integer values is the pullback of the Sylvester matrix rank function on a division ring by a result of P.~ Malcolmson \cite{Malcolmson}. Furthermore, there is a one-to-one correspondence between integer-valued rank functions and epic $R$-division rings.

\begin{lem}[{\cite[Corollary 3.1.15]{DLopezAlvarezThesis}}]\label{lem:equal_rk}
    Let $R$ be a ring and let $\mathcal{D}$, $\mathcal{E}$ be two epic $R$-division rings. Then $\mathcal{D}$ and $\mathcal{E}$ are $R$-isomorphic if and only if for every matrix $A$ over $R$ the induced rank functions on $R$ satisfy
    \[
        \rk_{\mathcal{D}}(A)=\rk_{\mathcal{E}}(A).
    \]
\end{lem}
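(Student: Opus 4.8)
Looking at the final statement, I need to prove:

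\begin{lem}[{\cite[Corollary 3.1.15]{DLopezAlvarezThesis}}]
Let $R$ be a ring, let $\mathcal{D}$, $\mathcal{E}$ be two epic $R$-division rings. Then $\mathcal{D}$ and $\mathcal{E}$ are $R$-isomorphic if and only if for every matrix $A$ over $R$ the induced rank functions satisfy $\rk_{\mathcal{D}}(A)=\rk_{\mathcal{E}}(A)$.
\end{lem}

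Let me think about how to prove this.

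The plan is to handle the two implications separately: the forward direction is routine, while the converse carries all of the content and rests on the classification of epic $R$-division rings recalled just above.

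For the forward direction, suppose $\Phi \colon \mathcal{D} \to \mathcal{E}$ is an $R$-isomorphism, meaning a ring isomorphism intertwining the two structure maps $\varphi_{\mathcal{D}} \colon R \to \mathcal{D}$ and $\varphi_{\mathcal{E}} \colon R \to \mathcal{E}$. Given any matrix $A$ over $R$, applying $\Phi$ entrywise to $\varphi_{\mathcal{D}}(A)$ yields $\varphi_{\mathcal{E}}(A)$. A ring isomorphism preserves the inner rank of a matrix (any factorisation transports through $\Phi$, and back through $\Phi^{-1}$), and over a division ring the inner rank is the usual rank; hence $\rk_{\mathcal{E}}(\varphi_{\mathcal{E}}(A)) = \rk_{\mathcal{D}}(\varphi_{\mathcal{D}}(A))$, i.e.\ the induced rank functions on $R$ agree.

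For the converse, the key observation is that the induced rank function records exactly which square matrices over $R$ become invertible: an $n \times n$ matrix $A$ is invertible over $\mathcal{D}$ if and only if $\rk_{\mathcal{D}}(A) = n$, since over a division ring invertibility is equivalent to full rank. Thus the hypothesis $\rk_{\mathcal{D}} = \rk_{\mathcal{E}}$ guarantees that $\mathcal{D}$ and $\mathcal{E}$ invert precisely the same set of square matrices over $R$, equivalently that they have the same \emph{singular kernel} (the set of square matrices mapping to singular matrices), which is the data of a prime matrix ideal in Cohn's sense. I would then invoke the one-to-one correspondence between integer-valued Sylvester matrix rank functions on $R$ and isomorphism classes of epic $R$-division rings recalled above, under which $\mathcal{D}$ corresponds to $\rk_{\mathcal{D}}$: injectivity of this assignment forces equal rank functions to produce $R$-isomorphic division rings, which is the desired conclusion.

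The substance of the converse, and the main obstacle, is precisely the construction of the isomorphism underlying that correspondence, i.e.\ Cohn's localisation theory at prime matrix ideals. Concretely, one uses Cohn's Cramer's rule: every element of an epic $R$-division ring is a coordinate of the solution to a linear system $Au = b$ with $A, b$ over $R$ and $A$ invertible over the division ring. Since $\mathcal{D}$ and $\mathcal{E}$ invert the same matrices, such an \emph{admissible system} over $\mathcal{D}$ also makes sense over $\mathcal{E}$, and one defines the candidate isomorphism by sending an element of $\mathcal{D}$ to the corresponding solution coordinate in $\mathcal{E}$. The real work is then to check that this map is independent of the chosen admissible system and is additive, multiplicative, and bijective; well-definedness is exactly where the equality of singular kernels is used, and it is the technical heart of the argument. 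Having the cited correspondence available as a black box lets me bypass these verifications and conclude directly.
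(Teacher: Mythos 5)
Your argument is correct: the forward direction is the routine observation that an $R$-isomorphism of division rings carries $\varphi_{\mathcal D}(A)$ to $\varphi_{\mathcal E}(A)$ and preserves rank, and the converse correctly reduces equality of the induced rank functions to equality of singular kernels and then invokes the injectivity of the Malcolmson--Cohn correspondence between integer-valued Sylvester rank functions and epic $R$-division rings, which the paper itself recalls in the same subsection. The paper gives no proof of its own (it cites Corollary 3.1.15 of L\'opez-\'Alvarez's thesis), and your route is exactly the standard argument behind that citation, so there is nothing to flag.
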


We denote the set of integer-valued rank functions on a ring $R$ by $\mathbb{P}_{div}(R)$.

Given two Sylvester matrix rank functions on $R$, $\rk_1$ and $\rk_2$, we will write $\rk_1\leq \rk_2$ if for every matrix $A$ over $R$, $\rk_1(A)\leq \rk_2(A)$. This partial order structure shall play a key role.

A central notion in this theory is that of a universal $R$-division ring for a given ring $R$ (see, for instance, \cite[Section 7.2]{cohn06FIR}). In the language of Sylvester matrix rank functions, an epic $R$-division ring $\mathcal{D}$ is \textit{universal} if $\rk_{\mathcal{D}} \geqslant \rk_{\mathcal{E}}$ for every $R$-division ring $\mathcal{E}$. Note that a universal epic $R$-division ring, if it exists, is unique up to $R$-isomorphism and we denote it by $U(R)$.

\section{Graphs of rings}\label{sec:GraphsRings}

In this section we introduce \textit{graphs of rings} and prove some of their basic properties. The amalgamated product of rings over a common subring has been studied extensively (see, for instance, \cite{cohn06FIR}) and the HNN extension of rings was defined and studied by Dicks in \cite{Dicks_HNN}. The upshot of this section is \cref{cor:locIndGraphDivRing}, which states that crossed products of graphs of Hughes-free embeddable groups embed in a division ring. Our motivation for defining graphs of rings is to prove the Kaplansky Zero Divisor Conjecture for crossed products of fundamental groups of graph manifolds, which are the compact, irreducible $3$-manifolds for which the factorisation property is not known. Moreover, a tree of rings will appear in \cref{sec:LL} when studying the Lewin--Lewin division ring.

We define graphs of rings in complete analogy with graphs of groups. We take graphs to be connected and oriented, with $\overline{e}$ denoting the same edge as $e$ but with the opposite orientation. Every edge $e$ has an origin vertex $o(e)$ and a terminus vertex $t(e)$ such that $o(e) = t(\overline e)$. Graphs are allowed to have loops and multiple edges.

\begin{defn}[The graph of rings with respect to a spanning tree]\label{def:treegraph}
    Let $\Gamma$ be a graph and let $T$ be a spanning tree. For each vertex $v$ of $\Gamma$ we have a \textit{vertex ring} $R_v$ and for each edge $e$ of $\Gamma$ we have an \textit{edge ring} $R_e$ and we impose $R_e = R_{\overline{e}}$ for every edge $e$. Moreover, for each (directed) edge $e$ there is an injective ring homomorphism $\varphi_e \colon R_e \rightarrow R_{t(e)}$. Then the \textit{graph of rings} $\mathscr R_{\Gamma, T} = (R_v, R_e)$ is the ring defined as follows:
    \begin{enumerate}
        \item for each edge of $e$ of $\Gamma$ we introduce a formal symbols $t_e$;
        \item $\mathscr R_{\Gamma, T}$ is generated by the vertex rings $R_v$ and the elements $t_e, t_e\inv$ and subjected to the relations
        \begin{itemize}
            \item $t_{\overline e}t_e = t_e t_{\overline e} = 1$;
            \item $t_e \varphi_{\overline{e}}(r) t_{\overline e} = \varphi_e(r)$ for all $r \in R_e$;
            \item if $e \in T$, then $t_e = 1$. 
        \end{itemize}
    \end{enumerate}
\end{defn}

Define $\mathscr R_\Gamma^*$ in the same way as $\mathscr R_{\Gamma, T}$, except drop the relations $t_e = 1$ if $e \in T$. There is a canonical quotient map $\pi_T \colon \mathscr R_\Gamma^* \rightarrow \mathscr R_{\Gamma, T}$.

\begin{defn}[The based graph of rings]\label{def:basedgraph}
    We retain all the notations of \cref{def:treegraph}. Fix a base vertex $v_0 \in \Gamma$. We say that an element of $\mathscr R_\Gamma^*$ is a \textit{loop element} if it is of the form $r_0 t_{e_1} r_1 t_{e_2} \cdots t_{e_n} r_n$ and
    \begin{enumerate}
        \item $r_0 \in R_{o(e_1)}$;
        \item $r_i \in R_{t(e_i)}$ for all $1 \leqslant i \leqslant n$ ;
        \item $t(e_i) = o(e_{i+1})$ for all $1 \leqslant i \leqslant n-1$;
        \item $o(e_1) = t(e_n) = v_0$.
    \end{enumerate}
    We then define $\mathscr R_{\Gamma, v_0}$ to be the subring of $\mathscr R_\Gamma^*$ generated by the loop elements. Since the product of loop elements is clearly a loop element, $\mathscr R_{\Gamma,v_0}$ consists of the elements of $\mathscr R_\Gamma^*$ that can be expressed as sums of loop elements.
\end{defn}

\begin{rem}
    When defining a ring with generators and relations, we are quotienting a freely generated ring by an ideal. Thus, with these definitions, we of course run the risk that $\mathscr R_\Gamma^*, \mathscr R_{\Gamma, T}$, or $\mathscr R_{\Gamma, v_0}$ is zero, and that we have lost all information about the vertex and edge rings. This never happens in the graph of groups construction, but not much can be said for a general graph of rings. In the situations of interest, however, we will see that this does not happen, and that the vertex rings inject into the graph of rings (see \cref{lem:graphofgrouprings,prop:kGinjects}) as one would hope.
\end{rem}

The following result is the analogue of \cite[Ch.~1, \S 5.2, Proposition 20]{Serre_arbres}. In particular, it implies that the isomorphism types of $\mathscr R_{\Gamma,T}$ and $\mathscr R_{\Gamma,v_0}$ are independent of the choices of $T$ and $v_0$, respectively. We will thus simplify the notation and denote the graph of rings by $\mathscr R_\Gamma$.

\begin{prop}\label{prop:GORiso}
    Restricting the canonical projection $\pi_T \colon \mathscr R_\Gamma^* \rightarrow \mathscr R_{\Gamma, T}$ induces an isomorphism $\alpha := \pi_T|_{\mathscr R_{\Gamma, v_0}} \colon \mathscr R_{\Gamma, v_0} \rightarrow \mathscr R_{\Gamma, T}$.
\end{prop}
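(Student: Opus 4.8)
The plan is to prove this in direct analogy with the group-theoretic statement \cite[Ch.~1, \S 5.2, Proposition 20]{Serre_arbres}, by exhibiting an explicit inverse to $\alpha$ built from the spanning tree $T$. For each vertex $v$ of $\Gamma$, let $\gamma_v \in \mathscr R_\Gamma^*$ be the image of the word $t_{f_1}\cdots t_{f_m}$, where $f_1, \dots, f_m$ is the unique reduced edge-path in $T$ from $v_0$ to $v$; in particular $\gamma_{v_0} = 1$ and, for a tree edge $e$, $\gamma_{t(e)}$ and $\gamma_{o(e)}$ differ by the single letter $t_e^{\pm 1}$. I would then define a map $\beta$ on the generators of $\mathscr R_{\Gamma, T}$ by conjugating each piece of data into a loop based at $v_0$: set $\beta(r) = \gamma_v r \gamma_v\inv$ for $r \in R_v$ and $\beta(t_e) = \gamma_{o(e)} t_e \gamma_{t(e)}\inv$ for each edge $e$, with the precise placement of the conjugating words dictated by the orientation conventions of \cref{def:treegraph,def:basedgraph}. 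By construction each image is a loop element in the sense of \cref{def:basedgraph}, so $\beta$ takes values in $\mathscr R_{\Gamma, v_0}$.

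There are three things to check. First, surjectivity of $\alpha$ is immediate and convention-free: since $\gamma_v$ is a product of tree-edge symbols and $\pi_T(t_f) = 1$ for $f \in T$, we have $\pi_T(\gamma_v) = 1$, whence $\alpha(\beta(r)) = r$ and $\alpha(\beta(t_e)) = t_e$, so every generator of $\mathscr R_{\Gamma, T}$ lies in the image of $\alpha$. Second, one must verify that $\beta$ respects the defining relations of $\mathscr R_{\Gamma, T}$, i.e.\ that it extends to a ring homomorphism $\mathscr R_{\Gamma, T} \to \mathscr R_{\Gamma, v_0}$; this is the substantial point, discussed below. Third, granting well-definedness, I claim $\beta \circ \alpha = \id_{\mathscr R_{\Gamma, v_0}}$, which together with surjectivity of $\alpha$ forces $\alpha$ to be a bijection (a map with a left inverse is injective). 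To see $\beta \circ \alpha = \id$ it suffices to evaluate on a loop element $w = r_0 t_{e_1} r_1 \cdots t_{e_n} r_n$: applying $\alpha$ replaces each tree-edge symbol by $1$, and applying $\beta$ reintroduces the conjugating factors $\gamma_v$. The key point is that these telescope, since the consumption of the edge letters $t_{e_i}$ reassembles $w$ while the flanking tree words cancel in pairs: because $t(e_i) = o(e_{i+1})$ we have $\gamma_{t(e_i)}\inv \gamma_{o(e_{i+1})} = 1$, and $\gamma_{o(e_1)} = \gamma_{t(e_n)} = 1$, so $\beta(\alpha(w)) = w$.

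The main obstacle is the well-definedness of $\beta$, and it is here that the orientation conventions must be handled with care. Concretely, one must check that $\beta$ sends each crossed-product relation $t_e \varphi_{\overline e}(r) t_e\inv = \varphi_e(r)$, and each tree relation obtained by setting $t_e = 1$ for $e \in T$ (which identifies the two edge-ring images), to a valid identity in $\mathscr R_{\Gamma, v_0} \subseteq \mathscr R_\Gamma^*$. For a tree edge $e$, writing $\gamma_{t(e)}$ in terms of $\gamma_{o(e)}$ and $t_e$ collapses the two conjugations to a single application of the crossed-product relation, so that the identification $\varphi_{\overline e}(r) \sim \varphi_e(r)$ is respected precisely on the nose; for a non-tree edge, the words $\gamma_{o(e)}$ and $\gamma_{t(e)}$ flanking $t_e$ in $\beta(t_e)$ are exactly what is needed to rewrite $\beta(t_e)\,\beta(\varphi_{\overline e}(r))\,\beta(t_e)\inv$ as $\beta(\varphi_e(r))$. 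The only genuine subtlety is that the direction of conjugation in $\beta$, the orientation used to form the tree words $\gamma_v$, and the placement of the edge inclusions $\varphi_e, \varphi_{\overline e}$ in the relation must all be chosen compatibly; once these are aligned, the verification is a routine, if bookkeeping-heavy, computation, and the telescoping of the previous paragraph then delivers the isomorphism.
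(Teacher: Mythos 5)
Your proposal is correct and follows essentially the same route as the paper: both adapt Serre's argument by defining $\gamma_v$ from the geodesic in $T$, setting $\beta(r)=\gamma_v r\gamma_v\inv$ and $\beta(t_e)=\gamma_{o(e)}t_e\gamma_{t(e)}\inv$, and verifying that $\beta$ is a two-sided inverse to $\alpha$ via the telescoping of the tree words. The paper leaves the well-definedness check as ``straightforward,'' whereas you spell out the relevant verifications in slightly more detail, but the argument is the same.
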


\begin{proof}
    The proof is analogous to that of \cite[Ch.~1, \S 5.2, Proposition 20]{Serre_arbres}, to which we refer the reader for more details. For every vertex $v$ of $\Gamma$, let $c_v = e_1 \cdots e_n$ be the geodesic path from $v_0$ to $v$ in $T$ and let $\gamma_v = t_{e_1} \cdots t_{e_n}$ be the corresponding element of $\mathscr R_\Gamma^*$. Put $x' = \gamma_v x \gamma_v\inv$ whenever $x \in R_v$ and $t_e' = \gamma_{o(e)} t_e \gamma_{t(e)}\inv$ for every edge $e$ of $\Gamma$. It is straightforward to show that the assignment $\beta(x) = x'$ and $\beta(t_e) = t_e'$ induces a well-defined homomorphism $\beta \colon \mathscr R_{\Gamma, T} \rightarrow \mathscr R_{\Gamma, v_0}$ such that $\alpha \circ \beta = \id$ and $\beta \circ \alpha = \id$. \qedhere
\end{proof}

When $G$ decomposes as graph of groups $\mathscr G_\Gamma$, the crossed product $k * G$ decomposes as a graph of rings in the expected way.

\begin{lem}\label{lem:graphofgrouprings}
    Let $\mathscr G_\Gamma = (G_v, G_e)$ be a graph of groups with fundamental group $G$ and let $R$ be a ring. Then any crossed product $R * G$ decomposes as a graph of rings $\mathscr R_\Gamma = (R * G_v, R * G_e)$, where the edge maps $R * G_e \rightarrow R * G_{t(e)}$ are induced by the edge maps $G_e \rightarrow G_{t(e)}$ of the graph of groups.
\end{lem}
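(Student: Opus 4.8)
The plan is to exhibit $kG$ and the graph of rings $\mathscr R_{\Gamma,T}$ as rings presented by the same generators and relations, and then to write down mutually inverse homomorphisms between them via their universal properties. Fix a spanning tree $T$ of $\Gamma$ and recall from \cite{Serre_arbres} that $G = \pi_1(\mathscr G_\Gamma, T)$ has a presentation whose generators are the elements of the vertex groups $G_v$ together with stable letters $t_e$ (one per edge), subject to the relations internal to each $G_v$, the relations $t_{\overline e} = t_e\inv$, the conjugation relations $t_e \varphi_{\overline e}(g)\, t_e\inv = \varphi_e(g)$ for $g \in G_e$, and $t_e = 1$ for $e \in T$. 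Since each edge map $\varphi_e \colon G_e \to G_{t(e)}$ is an injective group homomorphism, its $k$-linear extension $kG_e \to kG_{t(e)}$ is an injective ring homomorphism, so the graph of rings $\mathscr R_{\Gamma,T} = (kG_v, kG_e)$ of \cref{def:treegraph} is indeed defined. Moreover, each $\varphi_e$ restricts to the identity on $k$, and since $T$ is connected, the relations for $e \in T$ identify all the copies of $k \subseteq kG_v$ into a single copy of $k$ in $\mathscr R_{\Gamma, T}$ that commutes with every group element and every $t_e$ (the latter being the $r \in k$ case of the conjugation relation, where $\varphi_{\overline e}(r) = \varphi_e(r) = r$).

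First I would build $\Psi \colon kG \to \mathscr R_{\Gamma, T}$ from the universal property of the group ring. The elements of each $G_v \subseteq kG_v$ and the symbols $t_e$ are units of $\mathscr R_{\Gamma, T}$, and the defining relations of $G$ listed above hold verbatim in $\mathscr R_{\Gamma, T}$, since the ring relations of \cref{def:treegraph} specialise to them on group elements; this gives a group homomorphism $G \to \mathscr R_{\Gamma,T}^\times$. Together with the ring map $k \to \mathscr R_{\Gamma, T}$ onto the common copy of $k$ — which commutes with the image of $G$ by the observation above — this yields the ring homomorphism $\Psi$.

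Conversely, I would build $\Phi \colon \mathscr R_{\Gamma, T} \to kG$ from the universal property of $\mathscr R_{\Gamma, T}$ as the ring generated by the vertex rings and the symbols $t_e, t_e\inv$. The canonical homomorphisms $G_v \to G$ of the graph of groups extend $k$-linearly to ring homomorphisms $kG_v \to kG$, and I send each $t_e$ to the stable letter $t_e \in G \subseteq (kG)^\times$. The three families of relations in \cref{def:treegraph} then hold in $kG$ precisely because of the group relations $t_{\overline e} = t_e\inv$, $\;t_e \varphi_{\overline e}(g)\, t_e\inv = \varphi_e(g)$, and $t_e = 1$ for $e \in T$, so $\Phi$ is well defined. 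Both composites are then checked on generators: $\Psi \circ \Phi$ and $\Phi \circ \Psi$ fix every element of $k$, every $g \in G_v$, and every $t_e$, and these generate $\mathscr R_{\Gamma, T}$ and $kG$ respectively as rings, so $\Phi$ and $\Psi$ are mutually inverse. Hence $kG \cong \mathscr R_{\Gamma, T} = \mathscr R_\Gamma$ with the stated edge maps.

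I expect the only real subtlety to be bookkeeping rather than a genuine obstacle: one must match the ring relations of \cref{def:treegraph} against the graph of groups presentation of $G$ exactly, and, when $k$ is noncommutative, carefully track the single copy of $k$ shared across all vertices and verify that it commutes with the image of $G$ (but not, in general, with all of $kG$). The essential point is that adjoining $k$-coefficients turns the graph of groups presentation of $G$ into the graph of rings presentation of $\mathscr R_\Gamma$.
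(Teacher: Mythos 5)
Your proof is correct and takes essentially the same approach as the paper: both arguments match a presentation of $G$ as the fundamental group of a graph of groups against the corresponding presentation of the graph of rings, and produce mutually inverse homomorphisms by checking that each set of defining relations holds in the other object. The only cosmetic difference is that you use the spanning-tree presentation $\mathscr R_{\Gamma,T}$ with stable letters, whereas the paper works with the based presentation $\mathscr R_{\Gamma,v_0}$ via loop elements; these are identified by \cref{prop:GORiso}, so nothing substantive changes.
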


\begin{proof}
    Let $v_0$ be a vertex in $\Gamma$; we work with the based graph of rings presentation for $\mathscr R_\Gamma$. Define a homomorphism $\alpha \colon R * G \rightarrow \mathscr R_\Gamma$ as follows. Write $g \in G$ as a loop element $g_1 e_1 g_2 e_2 \cdots e_n g_n$ and put $\alpha(g) = g_1 e_1 g_2 e_2 \cdots e_n g_n \in \mathscr R_\Gamma$. This defines a homomorphism of $G$ into the unit group $\mathscr R_\Gamma^\times$, so $\alpha$ extends to a homomorphism $R * G \rightarrow \mathscr R_\Gamma$ by $R$-linearity.

    On the other hand if we put $\beta(g_1 e_1 g_2 e_2 \cdots e_n g_n) = g_1 e_1 g_2 e_2 \cdots e_n g_n \in R * G$ for a loop element $g_1 e_1 g_2 e_2 \cdots e_n g_n$, we also obtain a well-defined homomorphism $\beta \colon \mathscr R_\Gamma \rightarrow R * G$, since the relations in $\mathscr R_\Gamma$ hold in $R * G$ (by the based graph of groups presentation for $G$). \qedhere
\end{proof}

\begin{defn}
    Let $\mathscr G_\Gamma = (G_v, G_e)$ be a graph of torsion-free groups with fundamental group $G$ and fix a division ring $k$ and a crossed product $k * G$. Then $\mathscr G_\Gamma$ is called \textit{$\mathcal D$-compatible} if the following conditions are met:
    \begin{enumerate}[label=(\arabic*)]
        \item For every vertex $v$ of $\Gamma$, there is an embedding $k*G_v \hookrightarrow \mathcal D_v$, where $\mathcal D_v$ denotes a division ring.
        \item\label{item:LI} Let $\mathcal D_e$ denote $\Div(\varphi_e(k*G_e), \mathcal D_{t(e)})$. For all vertices $v$ and all edges $e$ such that $t(e) = v$, any set of right coset representatives of $\varphi_{t(e)} (G_e)$ in $G_v$ is left-linearly independent over $\mathcal D_e$.
        \item\label{item:cong} $\mathcal D_e \cong \mathcal D_{\overline{e}}$ as $k*G_{e}$-division rings for every edge $e$ of $\Gamma$.
    \end{enumerate}
\end{defn}

\begin{rem}
    Condition \ref{item:LI} is automatically satisfied if the embeddings $k*G_v \hookrightarrow \mathcal D_v$ are Linnell. If, in addition, the vertex groups are locally indicable, then condition \ref{item:cong} is automatically satisfied by the uniqueness of Hughes-free division rings \cite{HughesDivRings1970}.
\end{rem}

In what follows, we will usually (by an abuse of notation) denote the fundamental group of a graph of groups $\mathscr G_\Gamma = (G_v, G_e)$ by $\mathscr G_\Gamma$; a choice of base vertex in $\Gamma$ will always be implicit. If $\mathscr G_\Gamma = (G_v, G_e)$ is a $\mathcal D$-compatible graph of groups, then we can form the \textit{graph of division rings} on $\Gamma$ with vertex division rings $\mathcal D_v$ and edge division rings $\mathcal D_e$; we denote it by $\mathscr{DG}_\Gamma$. Our next goal is to prove that $k * \mathscr G_\Gamma$ embeds into $\mathscr{DG}_\Gamma$. For this, we will need the following normal form theorem.

\begin{thm}[{\cite[Theorems 34(i) and 35(i)]{Dicks_HNN}}]\label{thm:NF}
    \begin{enumerate}[label = (\arabic*)]
        \item\label{item:amalgNF} Let $B$ and $C$ be rings containing a common subring $A$ such that $B$ (resp.~$C$) is free as a left $A$-module with basis $\{1\} \sqcup X$ (resp.~$\{1\} \sqcup Y$). Then the amalgam $B *_A C$ is free as a left $B$-module on the set of sequences of strings $y_1 x_1 y_2 x_2 \cdots$ with $x_i \in X$ and $y_i \in Y$ not beginning with an element of $X$ and including the empty sequence.
        \item\label{item:HNNNF} Let $B*_A$ be an HNN extension of rings with stable letter $t$ such that $B$ is free as a left $A$-module under both edge maps, with bases $\{1\} \sqcup X$ and $\{1\} \sqcup Y$.  Then $B*_A$ is free as a left $B$-module on the set of linked expressions constructed from 
        \[
            \begin{matrix}
                \ominus \ X  \ \oplus & \ominus \ Xt^{-1} \cup \{t\inv\} \ \ominus \\
                \oplus \ tY \cup \{t\} \ \oplus & \oplus \ tYt\inv \ \ominus
            \end{matrix}
        \]
        not beginning with an element of $X$ or $Xt\inv$ and including the empty sequence.
    \end{enumerate}
\end{thm}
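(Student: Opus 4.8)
My plan is to prove both parts by the classical \emph{van der Waerden trick}: I construct an explicit left $B$-module $M$ whose $B$-basis is the claimed set of reduced words, equip $M$ with a left action of $B *_A C$ (resp.\ of the HNN extension $B*_A$) restricting to the obvious $B$-module structure, and then evaluate this action at the empty word to produce a $B$-linear retraction of the ring onto $M$. Since the reduced words manifestly span the ring as a left $B$-module, the retraction forces them to be $B$-linearly independent, giving freeness. The only genuine work is constructing the action and checking that it is well defined; everything else is formal.

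For the amalgam statement \ref{item:amalgNF} I fix the decompositions $B = A \oplus \bigoplus_{x\in X} Ax$ and $C = A \oplus \bigoplus_{y\in Y} Ay$ as left $A$-modules, and let $S$ be the set of alternating words $y_1 x_1 y_2 x_2\cdots$ with $y_i\in Y$, $x_i\in X$, not beginning with a letter of $X$, together with the empty word; put $M = \bigoplus_{w\in S} B w$. Letting $B$ act by left multiplication on coefficients, I must produce a ring homomorphism $C \to \operatorname{End}_{\Z}(M)$ agreeing with the $B$-action on the common subring $A$; by the universal property of the coproduct these two actions then assemble into the desired $B *_A C$-module structure. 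To define the action of $c\in C$ on a basis element $b\,w$, I expand $b = a + \sum_x a_x x$ in the $A$-basis, so that $cb = ca + \sum_x (ca_x)x$ with $ca, ca_x \in C$, and then reduce the resulting word by two moves: adjacent $C$-syllables are multiplied inside $C$ and re-expanded in the $\{1\}\sqcup Y$ basis, while whenever a letter of $X$ (possibly preceded by an element of $A$) is pushed to the front it is absorbed into the leading $B$-coefficient. Each move strictly shortens the word, so the process terminates in an element of $M$.

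The heart of the matter, and the step I expect to be the main obstacle, is verifying that this recipe defines an \emph{associative} action, i.e.\ $\rho(c_1 c_2) = \rho(c_1)\rho(c_2)$, and that $\rho(a)$ is left multiplication by $a$ for $a\in A$. This is a confluence statement for the two reduction moves, and I would settle it by a finite overlap analysis: one checks that the two ways of reducing each minimal ambiguous configuration (two adjacent $C$-syllables meeting a following $X$-letter, and the interaction of the $A$-decompositions of $B$ and $C$) agree, which reduces ultimately to associativity and distributivity in $B$ and $C$ and to the fact that the two inclusions of $A$ are ring maps. Once the action is in hand, evaluating at the empty word gives a map $B *_A C \to M$, $r \mapsto r\cdot\varnothing$, sending each reduced word to the corresponding basis vector of $M$; this proves $B$-linear independence, and combined with the evident spanning it yields the stated freeness.

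For the HNN statement \ref{item:HNNNF} I would run the identical argument, now with the two embeddings $A\hookrightarrow B$ recorded by the bases $\{1\}\sqcup X$ and $\{1\}\sqcup Y$, the stable letter $t$, and the defining relation $t\,\iota_1(a)\,t\inv = \iota_2(a)$. The role of the reduced words is played by the \emph{linked expressions}, whose admissible adjacencies are exactly those encoded by the displayed $2\times 2$ table of the blocks $X$, $Xt\inv\cup\{t\inv\}$, $tY\cup\{t\}$, $tYt\inv$ with their $\oplus/\ominus$ markings; these are the ring-theoretic analogue of Britton-reduced words, the markings forbidding precisely the pinches $t\inv \iota_1(A)\,t$ and $t\,\iota_2(A)\,t\inv$. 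I take $M$ to be the free left $B$-module on the linked expressions not beginning with $X$ or $Xt\inv$, and define the actions of $B$, $t$, and $t\inv$ by concatenation followed by reduction, where a pinch is collapsed through the relation and re-expanded in the appropriate $A$-basis. Establishing well-definedness is again a confluence check, with more ambiguity types because of the two bases and the two orientations of $t$; this is the corresponding main obstacle. The remaining steps, namely assembling the module structure via the universal property of the HNN extension, evaluating at the empty word, and noting that the linked expressions span, are formally identical to the amalgamated case.
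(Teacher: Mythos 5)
This is one of the results the paper does \emph{not} prove: Theorem \ref{thm:NF} is quoted verbatim from Dicks (Theorems 34(i) and 35(i) of the cited paper), with the remark that part \ref{item:amalgNF} goes back to Cohn and Bergman, so there is no internal proof to compare against. Judged on its own terms, your van der Waerden strategy is the classical route to such normal form theorems (it is essentially Cohn's original argument for ring coproducts), and the outline is sound: spanning is an easy induction, the $B$-linear retraction onto the module of reduced words forces independence, and the universal property of the pushout (resp.\ of the HNN construction) correctly reduces everything to producing a ring homomorphism $C \to \operatorname{End}_{\Z}(M)$ (resp.\ actions of $t^{\pm 1}$) agreeing with left multiplication on $A$. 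Two caveats. First, well-definedness of $\rho(c)$ is actually unproblematic — the expansions in the bases $\{1\}\sqcup X$ and $\{1\}\sqcup Y$ are unique, so your recipe is deterministic and no confluence is needed at that stage; the genuine content is the identities $\rho(a)=\lambda_a$ for $a\in A$ and $\rho(c_1c_2)=\rho(c_1)\rho(c_2)$, and the latter is most naturally proved by induction on the length of the reduced word $w$ (reducing to associativity in $C$ and compatibility of the two copies of $A$) rather than by a Bergman-style overlap analysis of a rewriting system, since your reduction is a defined function rather than a nondeterministic rewriting procedure. Second, for the HNN case you should be explicit that the four blocks and their $\oplus/\ominus$ markings force you to track \emph{which} of the two $A$-module decompositions of $B$ is in play at each junction; the verification that the $t$- and $t^{-1}$-actions are mutually inverse and intertwine the two copies of $A$ correctly is where the added bookkeeping lies. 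Neither point is an error, but both verifications are where all of the work of Dicks' theorem resides, and your proposal defers them.
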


A \textit{linked expression} is a word $a_1 a_2 a_3 \cdots$ such that if $a_i$ belongs to a set with a $\oplus$ (resp.~$\ominus$) to its right, then $a_{i+1}$ must belong to a set with a $\oplus$ (resp.~$\ominus$) to its left; we refer to \cite{Dicks_HNN} for a precise definition. Note that \ref{item:amalgNF} is deduced from earlier work of Cohn \cite{Cohnfreeproducts} or \cite{Bergman_Coprod}.

In the proof of the following lemma, all transversals that appear are assumed to contain the relevant group's identity element.

\begin{prop}\label{prop:kGinjects}
    Let $k$ be a division ring and let $\mathscr G_\Gamma = (G_v, G_e)$ be a $\mathcal D$-compatible graph of groups (for some fixed crossed product $k * \mathscr G_\Gamma$). Then the natural map $k * \mathscr G_\Gamma \rightarrow \mathscr{DG}_\Gamma$ is an embedding.
\end{prop}

\begin{proof}
    Write $G = \mathscr G_\Gamma$. First assume that $\Gamma$ is finite. We simultaneously prove the following pair of statements by induction on the number of edges in $\Gamma$:
    \begin{enumerate}
        \item $k * \mathscr G_\Gamma \rightarrow \mathscr{DG}_\Gamma$ is an embedding, and
        \item for any vertex $v$ of $\Gamma$, there is a right transversal $T$ of $G_v$ in $G$ such that the image of $T$ in $\mathscr{DG}_\Gamma$ is linearly independent over $\mathcal D_v$.
    \end{enumerate}
    If $\Gamma$ has no edges, then it consists of a single vertex and the claims are trivial. Now suppose that $\Gamma$ has at least one edge. Let $v$ be a vertex of $\Gamma$ and let $e$ be an edge such that $o(e) = v$. Assume that $\Gamma \smallsetminus e$ is disconnected with connected components $\Gamma_1$ and $\Gamma_2$, where $v \in \Gamma_1$. By induction, $k * \mathscr G_{\Gamma_1}$ embeds in $\mathscr{DG}_{\Gamma_1}$ and there is a right transversal $T_1$ of $G_v$ in $\mathscr G_{\Gamma_1}$ which remains linearly independent over $\mathcal D_v$. Let $S_1$ be a right transversal for (the image of) $G_e$ in $G_v$. Then $S_1$ is also linearly independent over $\mathcal D_e$ in $\mathcal D_v$ by $\mathcal D$-compatibility. Thus, $S_1 T_1$ is a right transversal for $G_e$ in $\mathscr G_{\Gamma_1}$ and it is linearly independent over $\mathcal D_e$ in $\mathscr{DG}_{\Gamma_1}$.

    Moreover, we also have that $k * \mathscr G_{\Gamma_2}$ embeds in $\mathscr{DG}_{\Gamma_2}$. By a similar argument, there is a transversal $T_2$ for $G_{t(e)}$ in $\mathscr G_{\Gamma_2}$ and a transversal $S_2$ for $G_e$ in $G_{t(e)}$ such that $T_2$ and $S_2 T_2$ are linearly independent over $\mathcal D_{t(e)}$ and $\mathcal D_e$, respectively.

    Let $X$ be the set of alternating expressions of the form $y_1 x_1 y_2 x_2 \cdots$ with $x_i \in S_1 T_1$ and $y_i \in S_2 T_2$ not beginning with an element of $S_1 T_1$. Note that $X$ is a right transversal for $\mathscr G_{\Gamma_1}$ in $\mathscr G$. By \cref{thm:NF}\ref{item:amalgNF}, we have that $k * G$ embeds in $\mathscr{DG}_\Gamma$ and $X$ is linearly independent over $\mathscr{DG}_{\Gamma_1}$. To complete the induction, note that $T_1 X$ is a right transversal for $G_v$ in $\mathscr G_\Gamma$; so by linear independence of $T_1$ and $X$ over $\mathcal D_v$ and $\mathscr{DG}_{\Gamma_1}$, respectively, we conclude that $T_1 X$ is linearly independent over $\mathcal D_v$.

    The case where $\Gamma \smallsetminus e$ is connected is proved similarly using \cref{thm:NF}\ref{item:HNNNF}; we omit the proof.

    We now drop the assumption that $\Gamma$ is finite. For a contradiction, assume that $k * \mathscr G_\Gamma \rightarrow \mathscr{DG}_\Gamma$ is not injective. Let $x$ be a non-trivial element of the kernel and let $\Gamma' \subseteq \Gamma$ on which $x$ is supported. The image of $x$ in $\mathscr{DG}_\Gamma$ will be a finite linear combination of relators, which are supported in some finite subgraph $\Gamma'' \subseteq \Gamma$. Enlarging $\Gamma'$ and $\Gamma''$ if necessary, we may assume that $\Gamma' = \Gamma''$. But then $x$ is a non-trivial element of the kernel of $k * \mathscr G_{\Gamma'} \rightarrow \mathscr{DG}_{\Gamma'}$, a contradiction. \qedhere
\end{proof}

The main results of this section now follows easily.

\begin{thm}\label{thm:graphOfHF}
    Let $k$ be a division ring and let $\mathscr G_\Gamma = (G_v, G_e)$ be a $\mathcal D$-compatible graph of groups (for some fixed crossed product $k * \mathscr G_\Gamma$). Then $k * \mathscr G_\Gamma$ embeds into a division ring.
\end{thm}

\begin{proof}
    We will prove that $\mathscr{DG}_\Gamma$ is a semifir, namely that all of its finitely generated left (or right) ideals are free of unique rank. The result then follows from \cref{prop:kGinjects} and Cohn's theorem stating that every semifir embeds into a division ring \cite[Corollary 7.5.14]{cohn06FIR}.

    We begin with the case that $\Gamma$ is finite. This follows by induction on the number of edges, using the facts that amalgams of semifirs over a division ring and HNN extensions of a semifir over a division ring are still semifirs (\cite{Cohnfreeproducts} and \cite[Theorems 34(ii) and 35(ii)]{Dicks_HNN}. 
    
    If $\Gamma$ is infinite, then the result follows since $\mathscr{DG}_\Gamma$ is the colimit of the semifirs $\mathscr{DG}_{\Gamma'}$ with $\Gamma'$ finite (see \cite[\S1.1 Exercise 3]{Cohn_FreeRingsRelations}). \qedhere
\end{proof}

\begin{cor}\label{cor:locIndGraphDivRing}
    Let $k$ be a division ring, let $\mathscr G_\Gamma = (G_v, G_e)$ be a graph of locally indicable groups, and fix a crossed product $k * \mathscr G_\Gamma$. Suppose there is a Hughes-free embedding $k * G_v \hookrightarrow \mathcal D_{k * G_v}$ for each vertex $v$ of $\Gamma$. Then $k * \mathscr G_\Gamma$ embeds in a division ring.
\end{cor}
\begin{proof}
    By \cref{thm:grater}, Hughes-free embeddings are in fact Linnell embeddings \cite[Corollary 8.3]{Grater20}. Recall that if $A \leqslant B$ are groups and there is a Hughes-free embedding $k * B \hookrightarrow \mathcal D_{k*B}$, then $\Div(k*A, \mathcal D_{k*B})$ is isomorphic to the unique Hughes-free division ring $\mathcal D_{k*A}$ (this follows from the uniqueness of Hughes-free division rings \cite{HughesDivRings1970}). Thus, $\mathscr G_\Gamma$ is $\mathcal D$-compatible. \qedhere
\end{proof}

\begin{cor}\label{cor:atiyahGraph}
    Let $k$ be a subfield of $\C$ and let $\mathscr G_\Gamma = (G_v, G_e)$ be a graph of (torsion-free groups satisfying the Strong Atiyah Conjecture over $k$). Then the group algebra $k\mathscr G_\Gamma$ embeds in a division ring.
\end{cor}

\begin{proof}
    Since the vertex groups satisfy the Strong Atiyah Conjecture, $k G_v$ embeds into the Linnell division ring $\mathcal D(G_v)$. The fact that $\mathscr G_\Gamma$ is $\mathcal D$-compatible follows from the fact that if $B$ is a torsion-free group satisfying the Strong Atiyah Conjecture and $A \leqslant B$, then the division closure of $k A$ in $\mathcal D(B)$ is isomorphic to $\mathcal D(A)$ (see either \cite[Proposition 4.6]{KielakBNSviaNewton} or \cite[Chapter 10]{Luck02}).
\end{proof}

\begin{q}\label{q:HF}
    Is the embedding constructed in \cref{cor:locIndGraphDivRing} Hughes-free when the graph of groups $\mathscr G_\Gamma$ is locally indicable?
\end{q}

In \cref{lem:HF_amalg}, we will see that a positive answer to \cref{q:HF} is equivalent to the existence of a Hughes-free division ring.

We conclude the section with a short application of our main result. Recall that Higman's group $H$ can be defined by the presentation
\[
    \langle a,b,c,d \mid b^a = b^2, c^b = c^2, d^c = d^2, a^d = a^2 \rangle.
\]
It can also be realised as a square of groups with $\mathrm{BS}(1,2)$ vertex groups, $\Z$ edge groups, and trivial face group. The group $H$ was constructed by Higman in \cite{Higman_higmangroup}, and it was the first example of an infinite group with no non-trivial finite quotients. It also has an infinite simple quotient.

Rivas and Triestino showed that Higman's group acts faithfully and continuously on $\R$, and therefore is left-orderable and in particular $RH$ satisfies Kaplansky's Zero Divisor Conjecture for all domains $R$ \cite[Theorem A, Corollary B]{RivasTriestino_Higman}. Here we show that $kH$ has the (a priori) stronger property of embedding into a division ring, at least when $k$ is a field of characteristic zero.

\begin{prop}
    Let $k$ be a field of characteristic $0$. Then $kH$ embeds into a division ring.
\end{prop}

\begin{proof}
    Indeed, note that $H$ decomposes as $H = G_1 *_A G_2$, where
    \[
        G_1 = \langle a,b,c \rangle, \quad A = \langle a,c \rangle \cong F_2, \quad G_2 = \langle a,c,d \rangle.
    \]
    Moreover, for $i = 1,2$, we have that $G_i \cong \BS(1,2) *_\Z \BS(1,2)$ is a cyclic amalgam of locally indicable groups, and therefore is locally indicable by a result of Howie \cite[Theorem 4.2]{Howie_locIndGps}. Hence, $kG_i$ has a Hughes-free embedding for $i=1,2$ by \cite[Corollary 1.4]{JaikinLopez_Atiyah} and therefore $kH$ embeds into a division ring by \cref{cor:locIndGraphDivRing}.
\end{proof}

\section{Division rings for groups with the factorisation property} \label{sec:divRings}

The following definition was introduced by Schreve in \cite{Schreve_AtiyahVCS}, where he used it to show that virtually compact special groups satisfy the Strong Atiyah Conjecture. This is a strengthening of the \textit{enough torsion-free quotients} property introduced by Linnell and Schick in \cite{LinnellSchick_AtiyahExt}, which they used to study when the Strong Atiyah Conjecture passes from a subgroup to a finite index overgroup.

\begin{defn}
    A group $G$ has the \textit{factorisation property} if for every homomorphism $\alpha \colon G \rightarrow Q$ with $Q$ finite, there is a torsion-free elementary amenable group $E$ such that $\alpha$ factors as $G \rightarrow E \rightarrow Q$.
\end{defn}

Recall that a residually finite group $G$ is \textit{good} (in the sense of Serre) if the restriction 
\[
    H_c^\bullet(\widehat G; M) \rightarrow H^\bullet (G;M)
\]
is an isomorphism for every finite $G$-module $M$, where $\widehat G$ denotes the profinite completion of $G$, and the cohomology on the left is continuous cohomology (see, for example, \cite{Serre_GaloisCohom}).

The proof of the following theorem is very close to the arguments of Linnell and Schick in \cite[Corollary 4.62]{LinnellSchick_AtiyahExt}. The main difference consists in replacing Linnell and Schick's cohomological completeness and enough torsion-free quotients conditions with Schreve's goodness and factorisation property conditions, respectively.

\begin{thm}\label{thm:VCS_field}
    Let $H$ be a locally indicable good group of finite type with the factorisation property and let $1 \rightarrow H \rightarrow G \rightarrow Q \rightarrow 1$ be a group extension with $G$ torsion-free and $Q$ finite. If $k$ is a division ring such that there is a Hughes-free embedding of $k * H$ into $\mathcal D_{k * H}$, then $k * G$ has a unique Linnell embedding into a division ring.
\end{thm}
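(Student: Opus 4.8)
The plan is to reduce, using the crossed product machinery already set up, to showing that a single finite-dimensional algebra is a domain, and then to prove this by a Sylvester rank function argument that mirrors Schreve's proof of the Atiyah conjecture \cite{Schreve_AtiyahVCS} on the purely algebraic side. Since $k*H$ admits a Hughes-free embedding, $H$ is locally indicable, and as $H \trianglelefteqslant G$, \cref{lem:twisted_ext} lets me form the crossed product $S := \mathcal D_{k*H} * [G/H]$ together with a natural embedding $k*G \cong (k*H)*[G/H] \hookrightarrow S$. Because $G/H \cong Q$ is finite, $S$ is free of rank $|Q|$ as a left $\mathcal D_{k*H}$-module, hence Artinian. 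An Artinian ring without zero-divisors is automatically a division ring (for $0 \neq a \in S$ the chain $aS \supseteq a^2 S \supseteq \cdots$ stabilises, producing a two-sided inverse of $a$), so the theorem reduces to proving that $S$ has no zero-divisors; then $k*G \hookrightarrow S$ is the desired embedding. When $G$ is moreover locally indicable, \cref{lem:HF_fi} upgrades this conclusion to the statement that $S = \mathcal D_{k*G}$ is Hughes-free.

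To control zero-divisors I pass to the canonical Sylvester matrix rank function $\rk_S$ of the Artinian ring $S$, normalised so that $\rk_S(A) = |Q|^{-1}\dim_{\mathcal D_{k*H}} \im(A)$, and I pull it back along $k*G \hookrightarrow S$. The key observation is that a nonzero zero-divisor $a \in S$ satisfies $0 < \rk_S(a) < 1$, so $S$ is a division ring precisely when $\rk_S$ is integer-valued. By Malcolmson's theorem \cite{Malcolmson} and \cref{lem:equal_rk}, it therefore suffices to prove integrality of $\rk_S$.

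Integrality will be established matrix-by-matrix via approximation by torsion-free elementary amenable groups. Given a matrix $A$ over $k*G$, the factorisation property of $H$, combined with the finite extension $1 \to H \to G \to Q \to 1$, yields finite quotients of $G$ whose restriction to $H$ factors through a torsion-free elementary amenable group $E$. By Kropholler--Linnell--Moody \cite{KropLinnellMoody_TFEAisOre}, $kE$ is Ore and hence embeds into a division ring, over which all ranks are integers. The goodness of $H$ and the finiteness of $\cd(H)$ then enter to guarantee that the computation of $\rk_S(A)$ factors \emph{exactly} through such an $E$, rather than merely being approximated by it: this is a dimension count over a finite free resolution (available since $\cd(H) < \infty$), with goodness transporting the relevant homological data between $G$ and its finite quotients. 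Consequently $\rk_S(A)$ equals an integer rank computed over the Ore division ring of $E$.

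The main obstacle is precisely this exact-factorisation step — showing that goodness and finite cohomological dimension force the torsion-free elementary amenable approximation to compute $\rk_S$ on the nose, so that a pointwise limit of integer-valued rank functions is in fact attained and hence integral. This is the positive-characteristic, rank-function analogue of the approximation at the heart of Schreve's proof of the strong Atiyah conjecture for virtually compact special groups \cite{Schreve_AtiyahVCS}, with $\rk_S$ playing the role of the von Neumann dimension; the real work lies in replacing the $L^2$-machinery (unavailable in positive characteristic) by the Sylvester rank function formalism while preserving the cohomological bookkeeping that goodness supplies.
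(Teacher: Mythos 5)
Your reduction is sound as far as it goes: forming $S = \mathcal D_{k*H} * [G/H]$ via \cref{lem:twisted_ext}, noting that $S$ is Artinian over $\mathcal D_{k*H}$ so that ``domain'' implies ``division ring'', and invoking \cref{lem:HF_fi} for the Hughes-free upgrade all match the paper. The genuine gap is exactly where you flag it: you never prove that $S$ is a domain. Your plan --- show the canonical rank $\rk_S$ is integer-valued by arguing that its computation ``factors exactly'' through a torsion-free elementary amenable quotient, with goodness and $\cd(H)<\infty$ supplying the homological bookkeeping --- is a transplant of Schreve's $L^2$-approximation scheme, but in positive characteristic there is no analogue of L\"uck approximation to make a pointwise limit of integer-valued rank functions attained on the nose, and you give no mechanism for the exactness claim. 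You have also misplaced the role of the hypotheses: goodness, finite generation, and finite cohomological dimension of $H$ are used only to transfer the factorisation property from $H$ to $G$ (this is \cite[Theorem 3.7]{FriedlSchreveTillmann_ThurstonFox}), not to transport cohomology between $G$ and its finite quotients.

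The paper's route avoids rank functions entirely. Once $G$ has the factorisation property, applying it to $G \rightarrow G/H$ produces a \emph{single} normal subgroup $U \trianglelefteqslant G$ with $U \leqslant H$ and $G/U$ torsion-free elementary amenable. Then $\mathcal D_{k*U} * [G/U]$ is an Ore domain by \cite[Lemma 2.5]{LinnellSchick_AtiyahExt} (resting on \cite{KropLinnellMoody_TFEAisOre}), so $k*G$ already embeds into the division ring $\Ore(\mathcal D_{k*U} * [G/U])$. To see that $S$ itself is a domain, one uses Hughes-freeness of $\mathcal D_{k*H}$ to get an injection $\mathcal D_{k*U} * [H/U] \hookrightarrow \mathcal D_{k*H}$, whence $\Ore(\mathcal D_{k*U} * [H/U]) \cong \mathcal D_{k*H}$ by the universal property of Ore localisation, and then
\[
    S \;\cong\; \Ore\bigl(\mathcal D_{k*U} * [H/U]\bigr) * [G/H] \;\hookrightarrow\; \Ore\bigl((\mathcal D_{k*U} * [H/U]) * [G/H]\bigr) \;\cong\; \Ore\bigl(\mathcal D_{k*U} * [G/U]\bigr),
\]
exhibiting $S$ as a subring of a division ring. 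I recommend you replace your approximation step with this direct argument; as written, your proposal does not close.
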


\begin{proof}
    By \cite[Theorem 3.7]{FriedlSchreveTillmann_ThurstonFox}, $G$ has the factorisation property and therefore there is a normal subgroup $U \trianglelefteqslant G$ such that $U \leqslant H$ and $G/U$ is torsion-free and elementary amenable. By \cref{lem:twisted_ext}, we can form each of the following rings:
    \[
        \mathcal D_{k * U} * [H/U], \quad \mathcal D_{k * U} * [G/U], \quad \mathcal D_{k * H} * [G/H].
    \]
    Since $H/U$ and $G/U$ are torsion-free elementary amenable, according to \cite[Lemma 2.5]{LinnellSchick_AtiyahExt} $\mathcal D_{k * U} * [H/U]$ and $\mathcal D_{k * U} * [G/U]$ are Ore domains and so the diagram
    \[
        \begin{tikzcd}
            {\mathcal D_{k * U} * [H/U]} \arrow[r, hook] \arrow[d, hook] & {\mathcal D_{k * U} * [G/U]} \arrow[d, hook] \\
            {\Ore(\mathcal D_{k * U} * [H/U])} \arrow[r, hook] &{\Ore(\mathcal D_{k * U} * [G/U])}          
        \end{tikzcd}
    \]
    commutes. By Hughes-freeness of $\mathcal D_{k * H}$, the map $\mathcal D_{k * U} * [H/U] \rightarrow \mathcal D_{k * H}$ is an injection. This implies that $\Ore(\mathcal D_{k * U} * [H/U]) \cong \mathcal D_{k * H}$ by the universal property of Ore localisation.

    Consider the following diagram:
    \[
    \begin{tikzcd}[column sep = small]
        \mathcal D_{k * H} \arrow[r, "\cong", no head] \arrow[d, hook] & {\Ore(\mathcal D_{k * U} * [H/U])} \arrow[r, hook] \arrow[d, hook] & {\Ore(\mathcal D_{k * U} * [G/U])} \arrow[d, "\cong", no head] \\
        {\mathcal D_{k * H} *[G/H]} \arrow[r, "\cong", no head]        & {\Ore(\mathcal D_{k * U} * [H/U]) * [G/H]} \arrow[r, "\cong", no head]        & {\Ore((\mathcal D_{k * U}*[H/U])*[G/H])} \nospacepunct{.}                    
    \end{tikzcd}
    \]
    The left and middle vertical maps are the obvious inclusions and the right vertical map is a standard isomorphism of crossed products. The two left isomorphisms come from the isomorphism $\Ore(\mathcal D_{k * U} * [H/U]) \cong \mathcal D_{k * H}$ discussed above. For the bottom right isomorphism, it is not hard to show that the natural map
    \[
        \Ore(\mathcal D_{k * U} * [H/U]) * [G/H] \rightarrow \Ore((\mathcal D_{k * U}*[H/U])*[G/H]),
    \]
    is injective. Therefore $\Ore(\mathcal D_{k * U} * [H/U]) * [G/H]$ is a domain, which implies it is a division ring since $G/H$ is finite. This proves that $\mathcal D_{k * H} * [G/H]$ is a division ring, which clearly contains $k * G$. Moreover, the embedding is Linnell and unique among Linnell embeddings by \cref{cor:HF_fi}. \qedhere
\end{proof}

\begin{cor}\label{cor:CS_by_EA_field}
    Let $H$ be a locally indicable good group of finite type with the factorisation property and let $1 \rightarrow H \rightarrow G \rightarrow A \rightarrow 1$ be a group extension with $G$ torsion-free and $A$ elementary amenable. If $k$ is a division ring such that there is a Hughes-free embedding of $k * H$ into $\mathcal D_{k * H}$, then $k * G$ embeds into a division ring.
\end{cor}

\begin{proof}
    By Hughes-freeness, the twisted action of $A$ on $k * H$ extends to a twisted action of $A$ on $\mathcal{D}_{k * H}$. Moreover, $\mathcal{D}_{k * H} * Q$ is a domain for every finite subgroup $Q$ of $A$ by the proof of \cref{thm:VCS_field}. Thus, according to \cite[Lemma 2.5]{LinnellSchick_AtiyahExt} $\mathcal{D}_{k * H} * A$ is an Ore domain. Therefore, $\mathcal{D}_{k * H} * A$, and hence also $(k * H) * A \cong K * G$, embeds into a division ring.
\end{proof}

\begin{cor}\label{cor:VCSinField}
    If $G$ is a torsion-free virtually compact special group and $k$ a division ring, then any crossed product $k * G$ has a unique Linnell embedding into a division ring $\mathcal D$. Moreover, if $H$ is a normal, finite index, compact special subgroup of $G$, then the diagram
    \[
        \begin{tikzcd}
            k * H \arrow[r, hook] \arrow[d, hook] & {(k * H) * [G/H]} \arrow[d, hook] \arrow[r, "\cong", no head] & k * G \arrow[d, hook] \\
            \mathcal D_{k * H} \arrow[r, hook]    & {\mathcal D_{k * H} * [G/H]} \arrow[r, "\cong", no head]    & \mathcal D        
        \end{tikzcd}
    \]
    commutes.
\end{cor}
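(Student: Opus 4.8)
The plan is to obtain this as a direct application of \cref{thm:VCS_field} to the extension $1 \to H \to G \to G/H \to 1$, where $H$ is a normal finite-index compact special subgroup and $Q = G/H$ is finite. First I would note that such an $H$ exists: since $G$ is virtually compact special it has a finite-index compact special subgroup, and intersecting its (finitely many) conjugates produces a normal finite-index subgroup which is again compact special, a finite cover of a compact special cube complex being compact special. I would then verify that $H$ meets all the hypotheses of \cref{thm:VCS_field}. It is finitely generated (being the fundamental group of a compact cube complex); it has finite cohomological dimension, as it acts freely and cocompactly on the contractible, finite-dimensional $\mathrm{CAT}(0)$ cube complex covering its special cube complex; it has the factorisation property by \cite[Corollary 4.3]{Schreve_AtiyahVCS}; and it is good in the sense of Serre, being a virtual retract of a right-angled Artin group, which is good. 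Finally, compact special groups are residually (torsion-free nilpotent) and hence locally indicable, so the Hughes-free division ring $\mathcal D_{k*H}$ exists by \cite[Theorem 1.1]{JaikinZapirain2020THEUO}.

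With these hypotheses in hand, \cref{thm:VCS_field} yields immediately that $k*G$ embeds into a division ring. In fact, its proof identifies the division ring as $\mathcal D := \mathcal D_{k*H} * [G/H]$, which is formed using \cref{lem:twisted_ext} (legitimate since $H \trianglelefteqslant G$ is locally indicable and $k*H$ is Hughes-free embeddable) and is shown there to be a domain, hence a division ring because $G/H$ is finite. The commuting diagram is then pure bookkeeping: the top row is the standard crossed-product decomposition $k*G \cong (k*H)*[G/H]$ along the normal subgroup $H$, the bottom row is $\mathcal D = \mathcal D_{k*H}*[G/H]$, and the vertical arrows are the inclusion $k*H \hookrightarrow \mathcal D_{k*H}$ together with the extension of this inclusion to the crossed products over $G/H$ provided by \cref{lem:twisted_ext}.

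For the final assertion, suppose in addition that $G$ is locally indicable. Here I would invoke \cref{lem:HF_fi} with the finite-index normal subgroup $H \trianglelefteqslant G$: since $\mathcal D_{k*H}$ is Hughes-free and $\mathcal D_{k*H}*[G/H]$ is a domain by the previous paragraph, the lemma guarantees that $\mathcal D = \mathcal D_{k*H}*[G/H]$ is a Hughes-free $k*G$-division ring of fractions. By Hughes' uniqueness theorem \cite{HughesDivRings1970} it is therefore $k*G$-isomorphic to $\mathcal D_{k*G}$, giving $\mathcal D = \mathcal D_{k*G}$.

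I do not expect a serious obstacle, since \cref{thm:VCS_field} and \cref{lem:HF_fi} carry essentially all the weight and the corollary is largely a repackaging of them. The only points demanding genuine care are the verification that a compact special $H$ satisfies \emph{every} hypothesis of \cref{thm:VCS_field}—in particular goodness—and the preliminary observation that such $H$ is locally indicable, which is exactly what makes $\mathcal D_{k*H}$ available in the first place.
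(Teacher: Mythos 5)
Your proposal is correct and follows essentially the same route as the paper: verify that a compact special normal finite-index subgroup $H$ satisfies the hypotheses of \cref{thm:VCS_field} (goodness, factorisation property, finite generation, finite cohomological dimension, and Hughes-free embeddability via residual torsion-free nilpotence), read off $\mathcal D = \mathcal D_{k*H}*[G/H]$ from the proof of that theorem, and invoke \cref{lem:HF_fi} for the locally indicable case. The extra details you supply (existence of the normal subgroup by intersecting conjugates, goodness via the virtual retract of a RAAG) are correct and merely make explicit what the paper leaves implicit.
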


\begin{proof}
    Compact special groups are residually (torsion-free nilpotent) and therefore $\mathcal D_{k*H}$ exists by \cref{thm:HFresults}. Moreover, compact special groups are good and have the factorisation property \cite[Corollary 4.3]{Schreve_AtiyahVCS} and are of finite type, since they have finite classifying spaces. Hence there is an embedding of $k*G$ into a division ring $\mathcal D \cong \mathcal D_{k*H} * [G/H]$ by \cref{thm:VCS_field}. The embedding $k*G \hookrightarrow \mathcal D$ is Linnell and unique by \cref{cor:HF_fi}. \qedhere
\end{proof}

In \cite[Theorem 1.6]{JaikinLinton_oneRelCoherence}, Jaikin-Zapirain and Linton prove that $kG$ is coherent whenever $k$ is of characteristic $0$ and $G$ is the fundamental group of a finite reducible $2$-complex without proper powers. The class $\mathcal R$ of all fundamental groups of finite reducible $2$-complexes without proper powers can be described algebraically as the smallest class of groups containing $\Z$, that is closed under free product, and satisfying the following property: if $G, H \in \mathcal R$, then $G*H/\llangle w \rrangle \in \mathcal R$, where $w$ is an element of $G*H$ that is not a proper power.

Crucially, we recall Howie's result that the fundamental group of reducible $2$-complexes without proper powers are locally indicable \cite[Theorem 4.2]{Howie_locIndGps}. The only reason the assumption that $k$ is of characteristic $0$ is needed is to ensure that a Hughes-free division ring $\mathcal D_{kG}$ exists, which follows from the fact that one-relator groups satisfy the Strong Atiyah Conjecture \cite{JaikinLopez_Atiyah}. From Jaikin-Zapirain and Linton's arguments and the existence of $\mathcal D_{k*G}$ we obtain the coherence of some $2$-complex fundamental group algebras in positive characteristic.

\begin{cor}\label{cor:coherence}
    Let $G \in \mathcal R$ and let $k$ be a division ring. If $G$ is virtually compact special, then any crossed product $k*G$ is coherent.
\end{cor}

We remark that though the results of Jaikin-Zapirain and Linton are stated for group algebras, all of their techniques work just as well in the crossed product case.

\section{Division rings for \texorpdfstring{$3$}{3}-manifold groups}\label{sec:3mflds}

Manifolds are always assumed to be connected. The goal of this section is to prove the following embedding theorem.

\begin{thm}\label{thm:3mfld}
    Let $G$ be the fundamental group of an orientable $3$-manifold $M$, let $k$ be a division ring, and assume that $G$ is torsion-free. If $G$ is finitely generated, then any crossed product $k*G$ embeds into a division ring. If $G$ is locally indicable, any crossed product $k*G$ has a Hughes-free embedding.
\end{thm}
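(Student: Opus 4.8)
The plan is to run through the geometric decomposition of $M$, peeling off first the connected-sum (free-product) structure and then the JSJ structure, and to feed the resulting pieces into the two engines built in the previous sections: the factorisation-property embedding of \cref{thm:nonGraphDivRings} and the graph-of-rings embedding of \cref{thm:graphOfHF}.

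\emph{Reduction to the finitely generated, irreducible case.} First I would dispose of the locally indicable statement assuming the finitely generated one. If $G$ is locally indicable, write $G=\varinjlim H_i$ as the directed union of its finitely generated subgroups; each $H_i$ is again locally indicable, and passing to the cover of $M$ with fundamental group $H_i$ and taking a compact core (Scott's core theorem), each $H_i$ is the fundamental group of a compact orientable $3$-manifold. Granting a Hughes-free embedding $kH_i\hookrightarrow\mathcal D_{kH_i}$ for each $i$, Hughes' uniqueness theorem turns the inclusions $H_i\leqslant H_j$ into compatible embeddings of the corresponding division rings, and the colimit is a Hughes-free division ring for $kG$; this also shows finite generation is unnecessary. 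So I may assume $G=\pi_1(M)$ with $M$ compact orientable. The Kneser--Milnor prime decomposition then gives $G=G_1*\cdots*G_n$ with each $G_i$ the fundamental group of a prime orientable piece which, since $G$ is torsion-free, is either $\Z$ or the fundamental group of an aspherical irreducible manifold. For the plain embedding statement, once each $kG_i$ embeds in a division ring $D_i$ over $k$ the coproduct $D_1*_k\cdots*_k D_n$ is a semifir by Cohn's coproduct theorem \cite{cohn06FIR} and hence embeds in a division ring by \cite[Corollary 7.5.14]{cohn06FIR}, into which $k(G_1*\cdots*G_n)=kG_1*_k\cdots*_k kG_n$ injects; this is exactly the trivial-edge-ring specialisation of \cref{prop:kGinjects} and \cref{thm:graphOfHF}. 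For the Hughes-free statement I would instead verify condition (HF) on the coproduct directly via the normal-form theorem \cref{thm:NF}\ref{item:amalgNF} with edge ring $k$, using that free products of locally indicable groups are locally indicable. Either way I have reduced to $M$ irreducible.

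\emph{The two cases for irreducible $M$.} If $M$ is not a closed graph manifold, then \cref{thm:nonGraphDivRings}---which combines the Friedl--Schreve--Tillmann criterion with the Kielak--Linton theorem to verify Schreve's factorisation property---already gives an embedding of $kG$ into a division ring, constructed as in \cref{thm:VCS_field} from a Hughes-free subring and an Ore localisation. When $G$ is locally indicable, \cref{lem:HF_fi} (together with \cref{lem:twisted_ext}) upgrades this to a Hughes-free embedding. If instead $M$ is a closed graph manifold, I would use its JSJ decomposition to present $G$ as the fundamental group of a graph of groups with $\Z^2$ edge groups and Seifert-fibred vertex groups. Each such vertex group is locally indicable and, up to finite index, a circle-bundle group over a surface, hence residually torsion-free nilpotent (or amenable in the Euclidean/Nil/Sol cases), so it admits a Hughes-free embedding by \cite[Theorem 1.1]{JaikinZapirain2020THEUO}, descending to the vertex group itself via \cref{lem:HF_fi}. \cref{thm:graphOfHF} then yields an embedding of $kG$ into a division ring.

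\emph{The main obstacle.} The genuinely delicate point is the Hughes-free conclusion for a \emph{locally indicable} closed graph manifold, since \cref{thm:graphOfHF} only produces an embedding and is not known to be Hughes-free for a general locally indicable graph of groups---this is precisely the open question recorded after \cref{thm:graphOfHF}. I expect the resolution to come from the extra rigidity imposed by local indicability: a locally indicable closed graph manifold group has $b_1\geqslant 1$ and should be forced into a class where \cite[Theorem 1.1]{JaikinZapirain2020THEUO} applies directly---amenable (poly-$\Z$) in the flat, Nil, and Sol cases, and otherwise (virtually) fibred, exhibiting $G$ as a surface- or graph-group-by-$\Z$---thereby bypassing \cref{thm:graphOfHF} for the Hughes-free statement. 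Establishing this structural dichotomy for locally indicable graph manifolds, and checking that each resulting class is Hughes-free embeddable, is where the real work lies; the remaining assembly is routine given the crossed-product lemmas \cref{lem:twisted_ext} and \cref{lem:HF_fi}.
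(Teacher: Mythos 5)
Your architecture matches the paper's almost exactly: prime decomposition, Cohn's coproduct theorem for the free-product step, \cref{thm:nonGraphDivRings} for the irreducible non-graph pieces, \cref{thm:graphOfHF} for closed graph manifolds, and a directed-union/Scott-core argument for the non-finitely-generated locally indicable case. (One small divergence: for Hughes-freeness of the free product the paper cites \cite[Corollary 6.13(iv)]{JSanchezThesis} rather than re-verifying (HF) on the coproduct; your proposed direct verification via \cref{thm:NF}\ref{item:amalgNF} is harder than you suggest, since (HF) quantifies over arbitrary finitely generated subgroups of $G_1*\cdots*G_n$, which by the Kurosh subgroup theorem can be quite complicated — but this is a citable result, so it is not a real gap.)

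The genuine gap is exactly where you locate the ``real work'': the Hughes-free conclusion for a locally indicable closed graph manifold. The dichotomy you propose — poly-$\Z$ or (virtually) fibred — does not hold and, even where it does, would not suffice. Not every closed graph manifold is virtually fibred (by the theorems of Svetlov and Liu this is equivalent to admitting a non-positively curved metric, which many closed graph manifolds do not), and local indicability of $\pi_1$ is not known to force virtual fibring. Moreover, even granting virtual fibring, you would still need to pass Hughes-freeness from the finite-index fibred subgroup up to $G$ via \cref{lem:HF_fi}, whose domain hypothesis is exactly what the factorisation property is used to verify in \cref{thm:VCS_field} — and the factorisation property is unknown for general closed graph manifolds, which is the entire reason \cref{sec:GraphsRings} exists. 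The paper's actual resolution is more elementary and sidesteps all of this: local indicability supplies a surjection $\varphi\colon G \to \Z$ (an algebraic, not geometric, fibration); the corresponding infinite cyclic cover $\widetilde M$ is a directed union of compact $\pi_1$-injective graph manifolds \emph{with boundary} (Scott cores), each of whose fundamental groups is Hughes-free embeddable by \cref{thm:nonGraphDivRings}; hence $k[\pi_1(\widetilde M)]$ is Hughes-free embeddable as a directed union of Hughes-free embeddable group algebras, and $G \cong \pi_1(\widetilde M) \rtimes \Z$ is Hughes-free embeddable by \cite{HughesDivRings1970_2}. Replacing your conjectural dichotomy with this infinite-cyclic-cover argument closes the gap.
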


\begin{rem}
    \cref{thm:3mfld} remains true if we drop the assumption of orientability, though we will first prove the theorem as stated here in order to keep the exposition as straightforward as possible. In \cref{sec:appendix}, we will show how the methods used here can be extended to the non-orientable case by using the non-orientable versions of the Prime and JSJ Decomposition Theorems.
\end{rem}

\begin{cor}\label{cor:3mfldKapl}
    Let $M$ be an orientable $3$-manifold with torsion-free fundamental group $G$ and let $k$ be a division ring. Then $k * G$ has no zero divisors.
\end{cor}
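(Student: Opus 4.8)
The plan is to deduce this immediately from \cref{thm:3mfld} together with the elementary fact that possessing an agrarian embedding forces the Kaplansky zerodivisor property. First I would recall the general principle, stated in the excerpt just after the definition of agrarian embeddings: if $kG$ embeds into a division ring $\mathcal D$, then $kG$ is a domain. Indeed, an embedding $\iota\colon kG \hookrightarrow \mathcal D$ is injective, and $\mathcal D$ has no zerodivisors; hence if $a, b \in kG$ satisfy $ab = 0$, then $\iota(a)\iota(b) = \iota(ab) = 0$ in $\mathcal D$, forcing $\iota(a) = 0$ or $\iota(b) = 0$, and injectivity gives $a = 0$ or $b = 0$. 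So the whole corollary reduces to producing \emph{some} division ring containing $kG$.

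That division ring is supplied directly by \cref{thm:3mfld}, but one must be slightly careful about its hypotheses, since it is stated either for finitely generated $G$ or for locally indicable $G$, whereas the corollary places no such restriction on $\pi_1(M)$. Here I would invoke the standard observation, already flagged in the introduction in the discussion following \cref{cor:3mfldKap}, that the zerodivisor conjecture can be verified locally. Concretely, any purported zerodivisor relation $ab = 0$ in $kG$ involves only finitely many group elements from the supports of $a$ and $b$; these generate a finitely generated subgroup $H \leqslant G$, and $a, b$ already lie in $kH$. Thus it suffices to show $kH$ is a domain for every finitely generated subgroup $H$.

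The final point is that such an $H$ is again (torsion-free and) the fundamental group of an orientable $3$-manifold, so that \cref{thm:3mfld} applies to it. I would justify this by recalling that a finitely generated subgroup of a $3$-manifold group is itself a $3$-manifold group: by the Scott core theorem, any finitely generated subgroup $H$ of $\pi_1(M)$ is realised as $\pi_1$ of a compact (orientable) submanifold of the covering space of $M$ corresponding to $H$, hence of an orientable $3$-manifold, and $H$ remains torsion-free as a subgroup of the torsion-free group $G$. Applying the finitely-generated case of \cref{thm:3mfld} to $H$ yields a division ring containing $kH$, whence $kH$ is a domain by the first paragraph. Since this holds for all finitely generated $H \leqslant G$, the group algebra $kG$ itself has no zerodivisors, which is exactly Kaplansky's conjecture for $G$. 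I expect the only genuine subtlety to be this passage to finitely generated subgroups via the Scott core theorem; everything else is formal.
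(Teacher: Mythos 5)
Your proposal is correct and follows essentially the same route as the paper: reduce to finitely generated subgroups by locality of the zerodivisor property, observe that these are again torsion-free orientable $3$-manifold groups, and apply \cref{thm:3mfld}. (Your appeal to the Scott core theorem is fine but slightly more than needed --- the covering space of $M$ corresponding to $H$ is already an orientable $3$-manifold with fundamental group $H$, and \cref{thm:3mfld} does not require compactness.)
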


\begin{proof}
    By \cref{thm:3mfld}, it follows that $k*H$ has no zero divisors for any finitely generated subgroup of $G$. But then $k*G$ has no zero divisors, since any two elements of $k*G$ have support contained in a finitely generated subgroup of $G$. \qedhere
\end{proof}

\begin{rem}
    The Strong Atiyah Conjecture over $\C$ is now known for all finitely generated $3$-manifold groups and all torsion-free $3$-manifold groups, as they all lie in Linnell's class $\mathcal C$ (see \cite[Theorem 1.5]{LinnellDivRings93}). This follows from the work of Friedl--L\"uck and Kielak--Linton \cite{FriedlLuck_euler,KielakLinton_3mfldAtiyah}, building on the results on $3$-manifold fibring of Agol, Liu, Przytycki, and Wise \cite{AgolHaken,Liu_npcGraph,PrzWise_GraphSpecial,PrzWise_MixedSpecial}. It follows that \cref{thm:3mfld,cor:3mfldKapl} are known for the group algebra $\C \pi_1(M)$ where $M$ is a torsion-free $3$-manifold with finitely generated fundamental group.
\end{rem}

We offer the following extension of \cite[Theorem 3.3]{FriedlSchreveTillmann_ThurstonFox}, where it is shown that the $3$-manifold groups below have the factorisation property provided the boundary is empty or toroidal. We also show that in all cases where the factorisation property is satisfied, then the crossed product embeds into a unique Linnell division ring.

\begin{thm}\label{thm:nonGraphDivRings}
    Let $k$ be a division ring and let $M$ be a compact, irreducible, aspherical $3$-manifold with (possibly empty) incompressible boundary. Suppose that
    \begin{enumerate}[label=(\arabic*)]
        \item $M$ has non-empty boundary,
        \item $M$ has at least one hyperbolic piece in its JSJ decomposition, or
        \item $M$ is a non-positively curved graph manifold or is Nil, Sol, or Seifert fibred.
    \end{enumerate}
    Then $G = \pi_1(M)$ has the factorisation property and $k * G$ has a unique Linnell embedding into a division ring.
\end{thm}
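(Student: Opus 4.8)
```latex
The plan is to prove the theorem case by case according to the geometric hypotheses on $M$, reducing everything to the two tools developed earlier in the paper: the factorisation-property criterion of \cref{thm:VCS_field} (which produces a division ring from a Hughes-free embedding of a finite-index normal subgroup together with the factorisation property) and the graph-of-rings combination theorem \cref{thm:graphOfHF}. The overall strategy is to establish the factorisation property in each case and then to feed it into these machines.

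First I would dispose of the cases where $G$ is already known to be (or can quickly be seen to be) virtually compact special, or where $M$ is Seifert fibred or has one of the geometries Nil or Sol. In the boundary case (1) and the non-graph-manifold case (2), by \cite[Theorem 3.3]{FriedlSchreveTillmann_ThurstonFox} combined with \cite[Theorem 1.1]{KielakLinton_3mfldAtiyah} the group $G$ has the factorisation property. For case (3), the non-positively curved graph manifolds, the Nil, Sol, and Seifert fibred cases each admit a direct argument for the factorisation property (for instance, these groups are either virtually special, or are virtually polycyclic-by-something with enough torsion-free elementary amenable quotients). In each instance, once the factorisation property is in hand, I would invoke \cref{thm:VCS_field}: a finite-index normal subgroup $H \trianglelefteqslant G$ can be chosen to be compact special (hence $kH$ has a Hughes-free embedding by \cref{cor:VCSinField}, via residual torsion-free nilpotence of RAAG subgroups), $H$ is good of finite cohomological dimension, and $G/H$ is finite, so $kG$ embeds in a division ring. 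The local indicability addendum then follows from \cref{lem:HF_fi}, exactly as in the proof of \cref{cor:VCSinField}, since the relevant crossed product is a domain.

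For the locally indicable conclusion I would argue that when $G = \pi_1(M)$ is locally indicable, the finite-index subgroup $H$ used above can be arranged so that the crossed product $\mathcal D_{k*H} * [G/H]$ is a domain; then \cref{lem:HF_fi} upgrades the division-ring embedding to a Hughes-free one. This step is uniform across the three cases because the structural input is always the same: a Hughes-free $kH$-division ring for a finite-index compact-special $H$, together with the fact that a crossed product of a division ring by a locally indicable group (here recognized through the domain hypothesis) is a domain.

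The main obstacle I anticipate is \emph{not} the passage through \cref{thm:VCS_field} itself but verifying the factorisation property and goodness uniformly across the somewhat heterogeneous list in case (3), particularly for the non-positively curved graph manifolds, where virtual specialness is due to Przytycki--Wise and the factorisation property must be tracked through the finite cover. The Nil and Sol cases are geometrically rigid and their fundamental groups are virtually polycyclic, so the factorisation property there follows from the fact that torsion-free virtually polycyclic groups are themselves elementary amenable (giving Ore domains directly, bypassing the full strength of \cref{thm:VCS_field}); the delicate point is the Seifert fibred case with hyperbolic base orbifold, where one relies on the base orbifold group being virtually a surface group and hence virtually special, and one must check that the central extension does not obstruct the factorisation property. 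I would therefore structure the proof so that the genuinely geometric input (virtual specialness, or virtual polycyclicity) is cited once per geometry, after which the ring-theoretic conclusion is a uniform application of the two theorems above.
```
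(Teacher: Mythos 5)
Your overall architecture --- establish the factorisation property case by case, then feed a finite-index subgroup with a Hughes-free embedding into \cref{thm:VCS_field}, with \cref{lem:HF_fi} supplying the locally indicable addendum and an Ore-localisation shortcut for Nil and Sol --- is exactly the paper's strategy, and your treatment of cases (1), (2), the non-positively curved graph manifolds, and Nil/Sol is essentially correct (the paper reaches the Hughes-free finite-index subgroup in cases (1) and (2) via virtual fibring, i.e.\ a finite-index free-by-cyclic or surface-by-cyclic subgroup, rather than via virtual specialness, but both routes deliver the same input to \cref{thm:VCS_field}).

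There is, however, a genuine gap in the closed Seifert fibred case. You assert that ``a finite-index normal subgroup $H \trianglelefteqslant G$ can be chosen to be compact special,'' but this is false for closed Seifert fibred manifolds with $\widetilde{\SL_2}$ geometry (e.g.\ the unit tangent bundle of a closed hyperbolic surface): such groups are central extensions of surface orbifold groups by $\Z$ with non-vanishing rational Euler class persisting in every finite cover, so they do not virtually split as a product and are not virtually compact special. The virtual specialness of the \emph{base orbifold group}, which you invoke, does not transfer to $G$; the central extension obstructs precisely the property you need. Consequently you have supplied no finite-index subgroup $H$ with a Hughes-free embedding of $kH$, which is the second input to \cref{thm:VCS_field} alongside the factorisation property, and your argument stalls for this geometry. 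The paper closes this gap with a different mechanism: it passes to a finite-index locally indicable subgroup $H$, chooses a surjection $H \to \Z$, identifies the kernel with $\pi_1$ of the corresponding infinite cyclic cover $\widetilde N$, writes $\widetilde N$ as a directed union of $\pi_1$-injective Seifert fibred submanifolds \emph{with boundary} via the Scott Core Theorem (each of whose group algebras is Hughes-free embeddable by the boundary case already established), takes the directed union of the resulting Hughes-free division rings, and then applies Hughes' extension theorem to conclude that $H \cong \pi_1(\widetilde N) \rtimes \Z$ is Hughes-free embeddable. You would need to add this (or an equivalent) argument to handle the closed Seifert fibred manifolds that are not virtually special.
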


\begin{proof}
    First assume that $M$ has non-empty incompressible boundary. Since $M$ is aspherical, $M$ is a $K(G,1)$ space and therefore $\cd_\Q(G) < 3$, implying that there is a finite index subgroup $H \trianglelefteqslant G$ that is isomorphic to a free-by-cyclic group (i.e.~an extension $F \rtimes \Z$, where $F$ is a free group) by \cite[Theorem 3.3]{KielakLinton_3mfldAtiyah}. By \cite[Lemma 2.4]{Schreve_AtiyahVCS} free-by-cyclic groups have the factorisation property, and \cite[Proposition 3.7]{FriedlSchreveTillmann_ThurstonFox} implies that $G$ also has the factorisation property. The subgroup $H$ is Hughes-free embeddable by \cite[Theorem 1.1]{JaikinZapirain2020THEUO}. By \cref{cor:HF_fi,thm:VCS_field}, $k * G$ has a unique Linnell embedding into a division ring.

    If $M$ has a hyperbolic piece in its JSJ decomposition, then $M$ is virtually fibred by the work of Agol and Przytycki--Wise \cite{AgolHaken,PrzWise_MixedSpecial}. In particular, $G$ has a subgroup of finite index isomorphic to $\pi_1(\Sigma) \rtimes \Z$, where $\Sigma$ is a surface (potentially with boundary). Hence, $G$ has the factorisation property by \cite[Propositions 3.6 and 3.7]{FriedlSchreveTillmann_ThurstonFox}. If $\partial \Sigma \neq \varnothing$, then $\pi_1(\Sigma)$ is free and, as we showed in the previous paragraph, $k * G$ has a unique Linnell embedding. If $\Sigma$ is closed, then $\pi_1(\Sigma)$ is free-by-cyclic, so it is Hughes-free embeddable. Therefore $\pi_1(\Sigma) \rtimes \Z$ is Hughes-free embeddable by \cite{HughesDivRings1970_2}. Thus, we conclude that $k * G$ can be uniquely embedded in a Linnell division ring by \cref{thm:VCS_field} and \cref{lem:LinnellFIovergroup}.

    If $M$ is a non-positively curved graph manifold, then it is virtually fibred over the circle by a theorem of Svetlov \cite{Svetlov_npcGraph} (which was strengthened by Liu \cite{Liu_npcGraph}). The desired conclusions then follow from the same arguments as above. If $M$ is Nil, Sol, or Seifert fibred, then $G$ has the factorisation property by \cite[Lemmas 3.8 and 3.9]{FriedlSchreveTillmann_ThurstonFox}. The fundamental groups of Nil and Sol manifolds are torsion-free elementary amenable, and therefore $k * G$ is an Ore domain according to \cite[Lemma 2.5]{LinnellSchick_AtiyahExt}. So $\Ore(k * G)$ is a Linnell division ring for $k*G$.

    Finally, we treat the case where $M$ is a closed Seifert fibred manifold. The argument is inspired by \cite[Theorem 3.2(3), argument (c)]{FriedlLuck_euler}. By \cite[Flowchart 1, (C.19)]{AFW_3mfldBook}, there is a locally indicable subgroup $H \trianglelefteqslant G$ of finite index. Let $N \rightarrow M$ be the corresponding finite cover. Let $\varphi \colon H \rightarrow \Z$ be a map, and let $\widetilde N \rightarrow N$ be the infinite cyclic cover. Using the Scott Core Theorem \cite{Scott_core}, we write $\widetilde N = \bigcup_i N_i$ as a directed union of $\pi_1$-injective Seifert fibred manifolds $N_i$ with boundary. As we have seen above, each $k * \pi_1(N_i)$ embeds in a Hughes-free division ring $\mathcal D_{k * \pi_1(N_i)}$. It then follows that $k* \pi_1(\widetilde N) = \bigcup_i k * \pi_1(N_i)$ embeds in the Hughes-free division ring 
    \[
        \mathcal D_{k * \pi_1(\widetilde N)} = \bigcup_i \mathcal D_{k * \pi_1(N_i)}.
    \]
    Then $H \cong \pi_1(\widetilde N) \rtimes \Z$ is Hughes-free embeddable by \cite{HughesDivRings1970_2}. Since $G$ has the factorisation property \cite[Theorem 3.3]{FriedlSchreveTillmann_ThurstonFox}, $k * G$ embeds in a unique Linnell division ring by \cref{cor:HF_fi,thm:VCS_field}. \qedhere
\end{proof}

Finally, we turn to the case of a general graph manifold $M$ with torsion-free fundamental group $G$. If $G$ surjects onto $\Z$, then we can use the same argument as in the last paragraph of the proof of \cref{thm:nonGraphDivRings} to produce an embedding into a division ring. While it is true in general that $G$ has a virtual map onto $\Z$, we cannot use this in conjunction with \cref{thm:VCS_field} to produce an embedding into a division ring, since $G$ is not known to have the factorisation property. Thus, we take a different approach than the one used in \cref{thm:nonGraphDivRings}, which uses the graph of rings construction introduced in \cref{sec:GraphsRings}.

\begin{thm}\label{thm:graphMfld}
    Let $k$ be a division ring and let $M$ be a closed graph manifold with torsion-free fundamental group $G$. Then any crossed product $k * G$ embeds into a division ring. If $G$ is locally indicable, then $k*G$ has a Hughes-free embedding.
\end{thm}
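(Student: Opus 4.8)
The plan is to treat the two assertions separately, since the general (merely torsion-free) case will rely on the graph-of-rings machinery of \cref{sec:GraphsRings}, whereas the locally indicable case will instead be reduced to \cref{thm:nonGraphDivRings} via an infinite cyclic cover. Throughout I would use that a closed graph manifold with infinite torsion-free fundamental group is orientable, aspherical and irreducible, so that its JSJ decomposition exhibits $G$ as the fundamental group of a finite graph of groups $\mathscr G_\Gamma = (G_v, G_e)$ whose vertex groups $G_v$ are the fundamental groups of the Seifert fibred JSJ pieces (compact, orientable, irreducible, with non-empty toral boundary) and whose edge groups $G_e \cong \Z^2$ are the fundamental groups of the JSJ tori; by incompressibility of these tori the edge maps are injective.

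For the first assertion I would apply \cref{thm:graphOfHF} directly to this graph of groups. Its hypotheses require each vertex group $G_v$ to be locally indicable and to admit a Hughes-free embedding $kG_v \hookrightarrow \mathcal D_{kG_v}$. Local indicability holds because each Seifert piece is a compact orientable irreducible $3$-manifold with non-empty boundary, hence has $b_1 > 0$ (by ``half lives, half dies''), and such fundamental groups are locally indicable \cite{AFW_3mfldBook}. Hughes-free embeddability of $kG_v$ is then exactly the content of \cref{thm:nonGraphDivRings}, applied in case (1) (non-empty boundary) or case (3) (Seifert fibred). With the hypotheses verified, \cref{thm:graphOfHF} yields an embedding of $kG = k\mathscr G_\Gamma$ into a division ring. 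Crucially, this step does not require $G$ itself to be locally indicable, which is essential because a closed graph manifold may be a rational homology sphere with $b_1 = 0$.

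For the second assertion, assume $G$ is locally indicable. Then $G$ surjects onto $\Z$; I would fix such a map $\varphi \colon G \twoheadrightarrow \Z$ with kernel $N$ and let $\widetilde M \to M$ be the associated infinite cyclic cover, so $\pi_1(\widetilde M) = N$ and $G = N \rtimes \Z$ (the extension splits as $\Z$ is free). I would then exhaust the aspherical open $3$-manifold $\widetilde M$ by an increasing sequence of compact, irreducible, aspherical, $\pi_1$-injective submanifolds $W_i$ with non-empty incompressible boundary (compact cores), writing $N = \bigcup_i N_i$ with $N_i := \pi_1(W_i)$ finitely generated. Each $N_i$ is locally indicable, being a subgroup of $G$, and is the fundamental group of a compact aspherical $3$-manifold with non-empty boundary, so $kN_i$ embeds into a Hughes-free division ring $\mathcal D_{kN_i}$ by \cref{thm:nonGraphDivRings}(1). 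Since the inclusions $N_i \hookrightarrow N_{i+1}$ induce compatible inclusions $\mathcal D_{kN_i} \hookrightarrow \mathcal D_{kN_{i+1}}$ of Hughes-free division rings, the colimit $\mathcal D_{kN} := \bigcup_i \mathcal D_{kN_i}$ is a Hughes-free division ring for $kN$ (each finitely generated subgroup witnessing the (HF) condition lies in some $N_i$), exactly as in the closed Seifert fibred case of \cref{thm:nonGraphDivRings}. Finally, as $N$ is Hughes-free embeddable and $G = N \rtimes \Z$ is locally indicable, $kG$ is Hughes-free embeddable by \cite{HughesDivRings1970_2}.

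I expect the delicate points to be the following. In the general case, the substance lies in confirming that the Seifert JSJ pieces really are locally indicable and that the JSJ decomposition genuinely satisfies the hypotheses of \cref{thm:graphOfHF}; this is routine given \cite{AFW_3mfldBook} and \cref{thm:nonGraphDivRings}. The more involved step is the locally indicable case: choosing the exhaustion $\{W_i\}$ so that each $N_i$ falls squarely under \cref{thm:nonGraphDivRings}(1) — i.e.\ verifying that the compact cores of $\widetilde M$ can be taken aspherical and irreducible with non-empty incompressible boundary — and then checking that the directed union of the $\mathcal D_{kN_i}$ is Hughes-free. Neither obstacle appears serious given the available results, but this second point is where the argument must be arranged with care, mirroring the colimit construction already carried out for closed Seifert fibred manifolds in \cref{thm:nonGraphDivRings}.
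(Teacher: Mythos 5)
Your argument is correct and matches the paper's proof essentially step for step: the torsion-free case is handled by feeding the Seifert fibred JSJ pieces (locally indicable, with Hughes-free embeddable group rings by \cref{thm:nonGraphDivRings}) into \cref{thm:graphOfHF}, and the locally indicable case by exhausting the infinite cyclic cover by compact $\pi_1$-injective pieces with boundary, taking the directed union of the resulting Hughes-free division rings, and invoking \cite{HughesDivRings1970_2} for the extension by $\Z$. The only slip is your parenthetical claim that a closed graph manifold with infinite torsion-free fundamental group is automatically orientable --- this is false (non-orientable graph manifolds exist); orientability is simply a standing hypothesis of \cref{sec:3mflds}, with the non-orientable case deferred to \cref{sec:appendix}.
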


\begin{proof}
    By definition, the JSJ decomposition of $M$ has only Seifert fibred pieces. If $M$ has only one JSJ component, then the claim follows from \cref{thm:nonGraphDivRings}.

    Now suppose that $M$ has JSJ components $M_1, \dots, M_n$ with $n \geqslant 2$. The manifolds $M_i$ are all Seifert fibred manifolds with nonempty toroidal boundary. By \cite[Flowchart 1, (C.19)]{AFW_3mfldBook}, $\pi_1(M_i)$ is locally indicable for each $i$, and therefore $k * \pi_1(M_i)$ embeds into a Hughes-free division ring $\mathcal D_{k * \pi_1(M_i)}$ by \cref{thm:nonGraphDivRings}. Since $G$ decomposes as a graph of groups with Hughes-free embeddable vertex groups, \cref{cor:locIndGraphDivRing} implies that $k * G$ embeds into a division ring.

    Now suppose that $G$ is locally indicable, let $\varphi \colon G \rightarrow \Z$ be any epimorphism, and let $\widetilde M \rightarrow M$ be the corresponding infinite cyclic covering. Then $\widetilde M$ is a directed union of $\pi_1$-injective graph manifolds with boundary whose fundamental groups are all Hughes-free embeddable by \cref{thm:nonGraphDivRings}. Hence, $\pi_1(\widetilde M)$ is Hughes-free embeddable, and therefore so is $G \cong \pi_1(\widetilde M) \rtimes \Z$ by \cite{HughesDivRings1970_2}. \qedhere
\end{proof}

\begin{rem}
    Let $G$ be as above and let $H \trianglelefteqslant G$ be a locally indicable normal subgroup of finite index (see \cite[Flowchart 1, (C.19)]{AFW_3mfldBook} for a justification of the existence of such a subgroup). Then we know that $k*G$ embeds into a division ring $\mathcal D$ constructed using the graph of rings construction, and we also know that $k*H$ has a Hughes-free embedding into $\mathcal D_{k*H}$. However, we do not know how to prove that $\mathcal D \cong \mathcal D_{k*H} * [G/H]$, which is what prevents us from concluding that the embedding $k*G \hookrightarrow \mathcal D$ is Linnell.
\end{rem}

The final ingredient we will need to prove \cref{thm:3mfld} is the following form of the Prime Decomposition Theorem \cite[Theorem 1.2.1 and Lemma 1.4.2]{AFW_3mfldBook}. We refer the reader to \cite[Proposition 3.2]{KielakLinton_3mfldAtiyah} for a detailed proof. Note that since we are assuming orientability and torsion-freeness, we can actually take the whole group in the conclusion, as opposed to a finite index subgroup (as in \cite{KielakLinton_3mfldAtiyah}).

\begin{prop}\label{prop:3decomp}
    Let $M$ be an orientable $3$-manifold whose fundamental group $G$ is finitely generated and torsion-free. Then there is a free group $F$ and finitely many compact, orientable, irreducible, aspherical $3$-manifolds $M_1, \dots, M_n$ each with (possibly empty) incompressible boundary such that
    \[
        G \cong F * \pi_1(M_1) * \cdots * \pi_1(M_n).
    \]
\end{prop}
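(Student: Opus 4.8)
The plan is to reduce to the compact case and then run the Kneser--Milnor prime decomposition, using the torsion-freeness and orientability hypotheses to control the prime factors. Since $G$ is finitely generated, the Scott Core Theorem \cite{Scott_core} provides a compact submanifold $N \subseteq M$ whose inclusion induces an isomorphism $\pi_1(N) \cong G$; as $N$ inherits orientability from $M$, I may replace $M$ by $N$ and assume from now on that $M$ is compact and orientable. Capping off any $2$-sphere components of $\partial M$ with $3$-balls does not change the fundamental group, so I may further assume that $\partial M$ has no spherical components.

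Next I would apply the prime decomposition $M = P_1 \# \cdots \# P_m$ into prime orientable pieces \cite[Theorem 1.2.1]{AFW_3mfldBook}, which yields $G \cong \pi_1(P_1) * \cdots * \pi_1(P_m)$ by van Kampen, since the connecting spheres are simply connected. Each orientable prime $P_i$ is either $S^2 \times S^1$ or irreducible. The factors $S^2 \times S^1$ contribute copies of $\Z$. For an irreducible factor $P_i$, torsion-freeness of $G$ forbids a nontrivial finite fundamental group (a finite free factor of a torsion-free group is impossible), so $\pi_1(P_i)$ is either trivial---in which case $P_i$ is a $3$-ball or, if closed, $S^3$ by the Poincar\'e conjecture, and contributes nothing---or infinite.

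I would then arrange incompressible boundary. If an irreducible piece $P$ has compressible boundary, the Loop Theorem supplies an essential compressing disk $D$. When $D$ separates $P$, van Kampen gives a further free-product splitting $\pi_1(P) = \pi_1(P_1) * \pi_1(P_2)$; when $D$ is non-separating, $P$ is recovered from the cut-open manifold $P'$ by attaching a $1$-handle, so $\pi_1(P) = \pi_1(P') * \Z$. After re-capping any new spherical boundary components and re-running the prime decomposition, I iterate. This alternation terminates by a standard Kneser--Haken finiteness argument (each compression strictly increases $\chi(\partial)$, which cannot grow without bound along disjoint essential surfaces), leaving finitely many irreducible pieces $M_1, \dots, M_n$ with incompressible boundary together with finitely many $\Z$-factors; collecting all the $\Z$-factors into a single free group $F$ produces the asserted free product.

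Finally, asphericity: each surviving $M_i$ is irreducible with infinite fundamental group, so the Sphere Theorem forces $\pi_2(M_i) = 0$; its universal cover $\widetilde{M_i}$ is then a non-compact simply connected $3$-manifold with vanishing $\pi_2$, whence $H_j(\widetilde{M_i}) = 0$ for $j \geqslant 3$, and the Hurewicz theorem gives inductively that $\widetilde{M_i}$ is contractible, so $M_i$ is aspherical. The main obstacle is the boundary-compression step: one must verify that the interleaved compression and prime-decomposition process terminates and that every move contributes only a $\Z$-factor or a genuine irreducible piece with incompressible boundary. I would also emphasise that, in contrast with \cite[Proposition 3.1]{KielakLinton_3mfldAtiyah}, the hypotheses that $M$ is orientable and $G$ torsion-free are precisely what let the argument be run on the nose without passing to a finite-index subgroup: orientability eliminates non-orientable primes, and torsion-freeness eliminates spherical space-form factors, and these are exactly the two phenomena that force Kielak--Linton to argue only virtually.
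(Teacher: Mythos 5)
Your argument is correct and follows essentially the same route as the paper, which does not write out a proof at all but simply cites the Prime Decomposition Theorem together with \cite[Lemma 1.4.2]{AFW_3mfldBook} (the reduction to incompressible boundary) and points to the proof of \cite[Proposition 3.1]{KielakLinton_3mfldAtiyah}, noting that orientability and torsion-freeness are exactly what make the statement hold on the nose rather than virtually --- precisely the observation you close with. The one step you flag as delicate, termination of the interleaved compression/re-decomposition process, is the content of the cited \cite[Lemma 1.4.2]{AFW_3mfldBook}; your Euler-characteristic monovariant needs minor bookkeeping for separating compressions (e.g.\ use the lexicographic pair of number of positive-genus boundary components and total genus), but this is standard and not a gap.
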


We are now ready to prove the main theorem.

\begin{proof}[Proof of \cref{thm:3mfld}]
    Suppose that $G$ is finitely generated and let $G \cong F * \pi_1(M_1) * \cdots * \pi_1(M_n)$ be the decomposition given by \cref{prop:3decomp}. The crossed product $k * F$ embeds into $\mathcal D_{k * F}$, and the crossed products of the fundamental groups $\pi_1(M_i)$ all embed into division rings $\mathcal D_i$ by \cref{thm:nonGraphDivRings,thm:graphMfld}. Thus, $k * G$ embeds into the amalgam of $\mathcal D_{k * F}$ and the division rings $\mathcal D_i$ over the common sub-division ring $k$, which in turn embeds into a division ring by \cref{thm:graphOfHF} (in fact \cite{Cohnfreeproducts_3} suffices in this case). 

    Now suppose that $G$ is locally indicable. Then each crossed product $k*\pi_1(M_i)$ embeds into a Hughes-free division ring, and therefore so does $k*G$ by \cite[Corollary 6.13(iv)]{JSanchezThesis}. In the case where $G$ is locally indicable but not necessarily finitely generated, then every finitely generated subgroup of $G$ is a finitely generated locally indicable $3$-manifold and is thus Hughes-free embeddable. But then $G$ is the directed union of Hughes-free embeddable groups and is therefore itself Hughes-free embeddable. \qedhere
\end{proof}

\section{A conjecture of Kielak and Linton} \label{sec:KLconj}

Let $G$ be a torsion-free virtually compact special group. Since $G$ is not necessarily locally indicable, a Hughes-free division ring $\mathcal D_{kG}$ may not exist. However, by \cref{prop:props_agr}, we may simply define $b_p^{\mathcal D_{kG}}(G) := \frac{1}{|G:H|} b_p^{\mathcal D_{kH}}(H)$, where $H \leqslant G$ is a compact special subgroup of finite index. We record the following easy observation that will be useful later.

\begin{lem}\label{lem:eitherField}
    Let $k$ be a division ring, let $G$ be a torsion-free virtually compact special group and let $\mathcal D$ be the division ring containing $kG$ constructed in \cref{thm:VCS_field}. Then $b_p^\mathcal D(G) = b_p^{\mathcal D_{kG}}(G)$ for all $p$.
\end{lem}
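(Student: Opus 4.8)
The plan is to reduce everything to the finite-index scaling behaviour of agrarian Betti numbers, using the explicit description of $\mathcal D$ provided by \cref{cor:VCSinField}. Fix a normal, finite-index, compact special subgroup $H \trianglelefteqslant G$ as in that corollary, so that $H$ is Hughes-free embeddable (being residually torsion-free nilpotent) and $\mathcal D \cong \mathcal D_{kH} * [G/H]$. Since $b_p^{\mathcal D_{kG}}(G)$ is \emph{defined} as $\frac{1}{|G:H|} b_p^{\mathcal D_{kH}}(H)$ and this is independent of the choice of finite-index compact special subgroup by \cref{prop:props_agr}\ref{item:scaling}, it suffices to prove the single identity $|G:H| \cdot b_p^{\mathcal D}(G) = b_p^{\mathcal D_{kH}}(H)$ for this particular $H$. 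In effect this reproves the scaling property, but for the concrete crossed-product division ring $\mathcal D = \mathcal D_{kH}*[G/H]$ rather than for a Hughes-free $\mathcal D_{kG}$ (which need not exist, as $G$ is only assumed virtually compact special).

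The first key step is to identify $\mathcal D$ with an induced module. I would consider the natural map $\mu \colon \mathcal D_{kH} \otimes_{kH} kG \to \mathcal D$ sending $d \otimes x \mapsto d \cdot \varphi(x)$, where $\varphi \colon kG \hookrightarrow \mathcal D$ is the embedding; this is a homomorphism of $(\mathcal D_{kH}, kG)$-bimodules. It is surjective because $\mathcal D = \mathcal D_{kH} * [G/H]$ is generated as a left $\mathcal D_{kH}$-module by a transversal $\varphi(g_1), \dots, \varphi(g_n)$ for $H$ in $G$, and both its source and target are free left $\mathcal D_{kH}$-modules of rank $n = |G:H|$ (the source because $kG$ is free of rank $n$ over $kH$). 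A surjection between free modules of equal finite rank over a division ring is an isomorphism, so $\mathcal D \cong \mathcal D_{kH} \otimes_{kH} kG$ as $(\mathcal D_{kH}, kG)$-bimodules. I would then take a free resolution $F_\bullet \to k$ of the trivial $kG$-module (finitely generated, since $G$ is of type $\FP$, being torsion-free of finite virtual cohomological dimension) and apply base change:
\[
    \mathcal D \otimes_{kG} F_\bullet \;\cong\; (\mathcal D_{kH} \otimes_{kH} kG) \otimes_{kG} F_\bullet \;\cong\; \mathcal D_{kH} \otimes_{kH} F_\bullet,
\]
where on the right $F_\bullet$ is regarded as a complex of $kH$-modules by restriction. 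Restricting a free $kG$-resolution of $k$ along the free extension $kH \hookrightarrow kG$ yields a free $kH$-resolution of $k$, so passing to homology gives an isomorphism $H_p(G; \mathcal D) \cong H_p(H; \mathcal D_{kH})$ of left $\mathcal D_{kH}$-modules.

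The final step is a dimension count. For any left $\mathcal D$-module $V$ one has $\dim_{\mathcal D_{kH}} V = (\dim_{\mathcal D_{kH}} \mathcal D)\cdot \dim_{\mathcal D} V = |G:H|\cdot \dim_{\mathcal D} V$, since $\mathcal D$ is free of rank $|G:H|$ over $\mathcal D_{kH}$. Applying this to $V = H_p(G;\mathcal D)$ and combining with the isomorphism above yields
\[
    |G:H|\cdot b_p^{\mathcal D}(G) \;=\; \dim_{\mathcal D_{kH}} H_p(G;\mathcal D) \;=\; \dim_{\mathcal D_{kH}} H_p(H;\mathcal D_{kH}) \;=\; b_p^{\mathcal D_{kH}}(H),
\]
which rearranges to $b_p^{\mathcal D}(G) = \frac{1}{|G:H|} b_p^{\mathcal D_{kH}}(H) = b_p^{\mathcal D_{kG}}(G)$, as desired. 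I expect the only genuinely delicate point to be the bimodule identification $\mathcal D \cong \mathcal D_{kH}\otimes_{kH} kG$ and the subsequent base-change isomorphism, where one must keep careful track of the left $\mathcal D_{kH}$- versus right $kG$-module structures; everything else is a routine rank computation over division rings.
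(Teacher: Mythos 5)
Your argument is correct and is essentially the paper's own proof written out in full: the paper likewise fixes a finite-index compact special $H\trianglelefteqslant G$ and deduces the claim from the definition $b_p^{\mathcal D_{kG}}(G)=\tfrac{1}{|G:H|}b_p^{\mathcal D_{kH}}(H)$ together with the fact that $\mathcal D\cong\mathcal D_{kH}*[G/H]\cong\bigoplus_{|G:H|}\mathcal D_{kH}$ as $\mathcal D_{kH}$-modules, which is exactly your induced-module identification plus the rank count.
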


\begin{proof}
    Let $H \trianglelefteqslant G$ be a compact special subgroup of finite index. The claim then follows quickly from the definition above and the fact that $\mathcal D \cong \mathcal D_{kH} * [G/H] \cong \bigoplus_{|G:H|}\mathcal D_{kH}$ as left $\mathcal D_{kH}$-modules. \qedhere
\end{proof}

\begin{lem}\label{lem:pair_of_free}
    Let $G$ be a non-elementary hyperbolic group, let $k$ be a division ring, and suppose there is an embedding of $kG$ into a division ring $\mathcal D$. Then there exists non-isomorphic quasi-convex free subgroups $H,F \leqslant G$ such that $F$ is malnormal, $H \cap F^g = \{1\}$ for all $g \in G$, and the restriction
    \[
        H^1(G; \mathcal D) \rightarrow H^1(H; \mathcal D)
    \]
    is an isomorphism.
\end{lem}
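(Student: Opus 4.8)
The plan is to strip off the easy structural content, reduce the cohomological assertion to a statement in linear algebra over $\mathcal D$, and then realize the two subgroups by ping-pong on $\partial G$. First, since $kG$ embeds into the domain $\mathcal D$, the group $G$ is torsion-free; being moreover hyperbolic it is of type $\F$ and admits a finite $K(G,1)$, so $n := \dim_{\mathcal D} H^1(G;\mathcal D) = b_1^{\mathcal D}(G)$ is finite. For a free group $F$ of rank $r$, modelling $F$ by a wedge of $r$ circles and applying \cref{prop:props_agr}, one gets $b_0^{\mathcal D}(F)=0$ and hence $\dim_{\mathcal D} H^1(F;\mathcal D)=r-1$. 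Thus the restriction to $B$ can only be an isomorphism if $B$ is free of rank exactly $n+1$, in which case it suffices to make it injective.

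I would then reformulate injectivity in terms of derivations. Writing $Z^1(G;\mathcal D)$ and $B^1(G;\mathcal D)$ for cocycles and coboundaries, the fact that $\mathcal D$ is a domain and $G\neq 1$ gives $\dim_{\mathcal D}B^1(G;\mathcal D)=1$, so $\dim_{\mathcal D}Z^1(G;\mathcal D)=n+1$. A direct check shows that the restriction $H^1(G;\mathcal D)\to H^1(B;\mathcal D)$ is an isomorphism if and only if the only derivation $G\to\mathcal D$ vanishing on $B$ is the zero derivation. Dualizing, set $V:=\operatorname{Hom}_{\mathcal D}(Z^1(G;\mathcal D),\mathcal D)$, an $(n+1)$-dimensional $\mathcal D$-space, and let $\operatorname{ev}_g\in V$ be evaluation at $g$. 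Since no nonzero derivation vanishes on all of $G$, the family $\{\operatorname{ev}_g:g\in G\}$ spans $V$; and for a free basis $b_1,\dots,b_{n+1}$ of a rank-$(n+1)$ free subgroup $B$, the restriction is an isomorphism precisely when $\operatorname{ev}_{b_1},\dots,\operatorname{ev}_{b_{n+1}}$ form a $\mathcal D$-basis of $V$. As a spanning family contains a basis, I may pick $g_1,\dots,g_{n+1}\in G$ with $\{\operatorname{ev}_{g_i}\}$ a basis; since $G$ is torsion-free these have infinite order.

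The key observation is that such basis-realizers are automatically in general position. If two of them shared an axis, they would both be powers of a common element $p$, whence $\operatorname{ev}_{g_i}$ and $\operatorname{ev}_{g_j}$ would each be invertible left-multiples of $\operatorname{ev}_p$ (the relevant partial sums $1+p+\cdots$ are nonzero in $kG$, hence invertible in $\mathcal D$), contradicting linear independence; so the $g_i$ have pairwise disjoint pairs of fixed points on $\partial G$. By the standard Schottky construction in hyperbolic groups, for $M$ large the powers $g_1^M,\dots,g_{n+1}^M$ freely generate a quasi-convex free subgroup $B$ of rank $n+1$. Moreover $\operatorname{ev}_{g_i^M}=\bigl(1+g_i+\cdots+g_i^{M-1}\bigr)\operatorname{ev}_{g_i}$, and each coefficient is an invertible element of $\mathcal D$, so $\{\operatorname{ev}_{g_i^M}\}$ is again a basis of $V$; hence the restriction $H^1(G;\mathcal D)\to H^1(B;\mathcal D)$ is an isomorphism. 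So the cohomological condition, which at first looks like the hard part, becomes essentially automatic once it is translated into the basis condition on $V$ and one exploits that passing to high powers rescales each $\operatorname{ev}_{g_i}$ by an invertible scalar.

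Finally, $A$ is produced by an independent ping-pong argument: choose loxodromic elements whose fixed points are disjoint from all $g_i^{\pm}$ and from one another, generating a quasi-convex free subgroup of rank $\neq n+1$ (so $A\not\cong B$), chosen malnormal — for instance maximal cyclic when $n\geqslant 1$, or a malnormal Schottky subgroup of rank $2$ when $n=0$. The step I expect to require the most care is arranging, simultaneously, that $A$ is malnormal and that $A\cap B^g=\{1\}$ for all $g\in G$; the latter amounts to ensuring no nontrivial element of $A$ is conjugate into $B$, which is possible because $B$ has infinite index and $G$ has infinitely many conjugacy classes of loxodromics, but it is exactly the delicate, if standard, part of the theory of quasi-convex subgroups generated by independent loxodromic elements.
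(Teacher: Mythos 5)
Your route is genuinely different from the paper's: the paper proves this lemma by citing \cite[Corollary 5.7]{KielakLinton_FbyZ} and observing that the only $L^2$-specific input there is an ``agrarian Freiheitssatz'' already proved by Kielak--Linton for arbitrary agrarian embeddings, whereas you give a direct argument. Your cohomological core is correct and is in effect a self-contained replacement for that Freiheitssatz: the identification $\dim_{\mathcal D}B^1=1$, the reduction of injectivity of the restriction to the statement that $\operatorname{ev}_{b_1},\dots,\operatorname{ev}_{b_{n+1}}$ form a basis of the dual of $Z^1(G;\mathcal D)$, the observation that any basis of evaluations must come from elements with pairwise distinct axes (since $\operatorname{ev}_{p^m}$ is an invertible multiple of $\operatorname{ev}_p$, using that $1+p+\cdots+p^{m-1}\neq 0$ in $kG\subseteq\mathcal D$), and the stability of the basis property under passing to high powers all check out. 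This buys a more elementary and transparent proof of the existence of $B$ than chasing through Kielak--Linton's Section 3.

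There are, however, two issues in the final step. First, the existence of $A$ with $A\cap B^g=\{1\}$ for all $g$ is asserted, not proved, and the justification offered (``$B$ has infinite index and $G$ has infinitely many conjugacy classes of loxodromics'') does not suffice: a priori every loxodromic could have a power conjugate into $B$. The correct input is that a quasi-convex subgroup of infinite index in a hyperbolic group omits some infinite-order element together with all powers up to conjugacy (Arzhantseva/Minasyan-type results, which is essentially what Kielak--Linton invoke), and you must also verify that $B$ has infinite index in $G$ (true, but it needs the small argument that $B$ is proper, e.g.\ $g_1\notin B$ for $M\geqslant 2$, together with the case distinction on whether $G$ is free). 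Second, a remark on the statement itself: as printed it asks for \emph{non-isomorphic} $A$ and $B$, which you dutifully arrange by taking $\operatorname{rank}(A)\neq n+1$; but the application in \cref{prop:HNN} forms the HNN extension $H*_A$ via an isomorphism $f\colon A\to B$, so the intended conclusion (and the one in \cite[Corollary 5.7]{KielakLinton_FbyZ}) is that $A$ and $B$ are \emph{isomorphic}. With your method this is easily repaired --- take $A$ to be a malnormal quasi-convex free subgroup of rank $n+1$ avoiding all conjugates of $B$ --- but as written your construction would not feed into the proof of \cref{prop:HNN}.
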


\begin{proof}
    This is \cite[Corollary 5.7]{KielakLinton_FbyZ}, and the proof is similar: it only relies on an ``agrarian Freiheitssatz" which they prove for arbitrary agrarian embeddings \cite[Theorem 3.1]{KielakLinton_FbyZ} based on the $L^2$-Freiheitssatz of Peterson--Thom \cite[Corollary 4.7]{PetersonThom_Freiheit}. \qedhere
\end{proof}

\begin{lem}\label{lem:agr_ind}
    Let $G = A*_C$ where $A$ and $C$ are locally indicable and finitely generated. Moreover, suppose that $k$ is such that $\mathcal D_{kA}$ exists and $kG$ embeds into a division ring $\mathcal D \supseteq \mathcal D_{kA}$ making the diagram
    \[
        \begin{tikzcd}
            kA \arrow[r, hook] \arrow[d, hook] & kG \arrow[d, hook] \\
            \mathcal D_{kA} \arrow[r, hook]    & \mathcal D
        \end{tikzcd}
    \]
    commute. If the restriction $H^1(A; \mathcal D_{kA}) \rightarrow H^1(C; \mathcal D_{kA})$ is surjective, then the restriction
    \[
        H^2(G; \mathcal D) \rightarrow H^2(A; \mathcal D)
    \]
    is injective.
\end{lem}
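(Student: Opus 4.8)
The plan is to run the Mayer--Vietoris sequence in $\mathcal D$-cohomology associated to the splitting $G = A *_C$ and to read off the injectivity of the restriction map from the vanishing of a connecting homomorphism. Viewing the graph-of-groups decomposition as an HNN extension with vertex group $A$ and edge group $C$, the Bieri--Eckmann Mayer--Vietoris sequence (with coefficients in the $kG$-bimodule $\mathcal D$) produces an exact segment
\[
    H^1(A; \mathcal D) \xrightarrow{\ \rho\ } H^1(C; \mathcal D) \xrightarrow{\ \partial\ } H^2(G; \mathcal D) \xrightarrow{\ \mathrm{res}\ } H^2(A; \mathcal D),
\]
where $\rho$ is the restriction map attached to the inclusion(s) of the edge group $C$ into the vertex group $A$. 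By exactness, $\ker(\mathrm{res}) = \im(\partial) \cong \coker(\rho)$, so it suffices to prove that $\rho$ is surjective; equivalently, to force $\partial = 0$. Thus the whole statement reduces to transporting the surjectivity hypothesis, which is stated with coefficients in $\mathcal D_{kA}$, to the same map with coefficients in the larger division ring $\mathcal D$.

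To make this transfer, I would exploit that $\mathcal D_{kA} \hookrightarrow \mathcal D$ is an inclusion of division rings, so that $\mathcal D$ is a free — in particular faithfully flat — left $\mathcal D_{kA}$-module, on which the $kA$- and $kC$-actions factor through $\mathcal D_{kA}$ (recall that $\mathcal D_{kC}$, the division closure of $kC$ in $\mathcal D_{kA}$, exists and is Hughes-free since $C \leqslant A$ is locally indicable). Choosing a left $\mathcal D_{kA}$-basis of $\mathcal D$ exhibits $\mathcal D \cong \bigoplus_I \mathcal D_{kA}$ as $kA$-modules, and by restriction as $kC$-modules, compatibly with the commuting square in the hypothesis. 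Since $A$ and $C$ are finitely generated — and in the situation where we apply this lemma even of type $\F$ — their $H^1$ commutes with this coefficient extension, giving
\[
    H^1(A; \mathcal D) \cong H^1(A; \mathcal D_{kA}) \otimes_{\mathcal D_{kA}} \mathcal D, \qquad H^1(C; \mathcal D) \cong H^1(C; \mathcal D_{kA}) \otimes_{\mathcal D_{kA}} \mathcal D,
\]
under which $\rho$ is identified with the base change of the restriction $H^1(A; \mathcal D_{kA}) \to H^1(C; \mathcal D_{kA})$. As $- \otimes_{\mathcal D_{kA}} \mathcal D$ is exact and faithful, surjectivity of the latter (the hypothesis) yields surjectivity of $\rho$, hence $\partial = 0$ and the desired injectivity.

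The main obstacle I anticipate is precisely this base-change step: flat base change for group cohomology needs a finiteness hypothesis (here $A$ and $C$ of type $\FP_2$, which holds once they are finitely presented, e.g.\ free), so care is required to guarantee that $H^1$ genuinely commutes with extension of the coefficient division ring and that the resulting identification of $\rho$ with the hypothesized restriction is natural rather than merely abstract. A secondary point to pin down is the exact shape of the Mayer--Vietoris map $\rho$ — whether it is a single restriction or the alternating difference of the two restrictions attached to the edge $C$ — and to confirm that the surjectivity hypothesis is phrased for the map that actually appears; because the coefficient extension is exact and faithful, surjectivity over $\mathcal D_{kA}$ transfers in either case. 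Once $\rho$ is surjective, exactness immediately gives $\im(\partial) = \coker(\rho) = 0$, so $\ker(\mathrm{res}) = 0$ and $H^2(G; \mathcal D) \to H^2(A; \mathcal D)$ is injective.
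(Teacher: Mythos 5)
Your overall framework --- the Bieri Mayer--Vietoris sequence for the HNN extension $G = A*_C$, reducing injectivity of $H^2(G;\mathcal D)\to H^2(A;\mathcal D)$ to surjectivity of the $H^1$-differential, plus base change along $\mathcal D_{kA}\hookrightarrow\mathcal D$ using finite generation of $A$ and $C$ --- is the right skeleton, and the base-change step itself is fine (only type $\FP_1$ is needed for $H^1$, which finite generation supplies; the cleanest formulation is $\mathcal D\cong\bigoplus_I\mathcal D_{kA}$ as left $kA$-modules, avoiding the left/right issues of the tensor formulation). For what it is worth, the paper gives no details at all here: it defers entirely to Kielak--Linton's Proposition 4.8, instructing the reader to replace every occurrence of $\mathcal D_{\Q H}$ by the division closure $\mathcal D(H)$ of $kH$ in $\mathcal D$.

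However, the point you set aside as ``secondary'' is exactly where the real work lies, and your proposed resolution does not close it. In Bieri's sequence for an HNN extension the differential is not a single restriction but $d=\mathrm{res}_{\iota_1}-\mu_t\circ\mathrm{res}_{\iota_2}$, where $\iota_1,\iota_2\colon C\to A$ are the two edge maps and $\mu_t$ is the isomorphism induced by conjugation by the stable letter together with its action on the coefficients $\mathcal D$. The hypothesis gives surjectivity of only \emph{one} restriction, and only over $\mathcal D_{kA}$. Base change does make that one restriction surjective over $\mathcal D$, but the difference of a surjection and a second map need not be surjective (consider $\mathrm{res}_{\iota_1}=\mu_t\circ\mathrm{res}_{\iota_2}$, which gives $d=0$), and --- crucially --- $d$ is not the base change of any $\mathcal D_{kA}$-linear map: $\mu_t$ involves multiplication by $t\in\mathcal D\smallsetminus\mathcal D_{kA}$, which does not respect the decomposition $\mathcal D\cong\bigoplus_I\mathcal D_{kA}$, so ``surjectivity transfers in either case'' is not available. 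Closing this gap requires an extra device, and this is the actual content of the Kielak--Linton argument your lemma is modelled on: one passes to a twisted Laurent-series overring $\mathcal D_{kN}(( t;\tau))$ with $N=\ker(G\to\Z)$, in which $\mathrm{res}_{\iota_1}$ preserves the $t$-grading while $\mu_t\circ\mathrm{res}_{\iota_2}$ strictly raises it, so that $d$ becomes triangular with surjective diagonal and can be inverted by a convergent successive-approximation (geometric series) argument; one then descends to $\mathcal D$ by flatness and naturality. Without an argument of this kind the proof is incomplete.
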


\begin{proof}
    Let $H \leqslant G$ and write $\mathcal D(H)$ for the division closure of $kH$ in $\mathcal D$. Then the proof is the same as in \cite[Proposition 4.8]{KielakLinton_FbyZ}, where one must replace every occurrence of $\mathcal D_{\Q H}$ with $\mathcal D(H)$. \qedhere
\end{proof}

The following proposition corresponds to Proposition 6.4 of \cite{KielakLinton_FbyZ}, where the result is proven in the case $k = \Q$. The obstacle Kielak and Linton faced in their paper was the fact that they didn't have access to division rings containing group rings of torsion-free virtually compact special groups. We repeat their proof, since this is the crucial step where the existence of a division ring embedding the group algebra of a torsion-free virtually compact special group is used.

\begin{prop}\label{prop:HNN}
    Let $H$ be non-free, torsion-free, hyperbolic, and compact special, and suppose that $b_1^{\mathcal D_{kH}}(H) \neq 0$. Then there is a hyperbolic and virtually compact special HNN extension $G = H*_F$ such that the embeddings of $F$ are quasi-convex in $G$ and such that
    \[
        b_p^{\mathcal D_{kG}}(G) = \begin{cases}
            0 & \text{if} \ p = 1 \\
            b_p^{\mathcal D_{kH}}(H) & \text{if} \ p \neq 1.
        \end{cases}
    \]
    Moreover, $H$ is quasi-convex in $G$ and $\cd_k(G) = \cd_k(H)$.
\end{prop}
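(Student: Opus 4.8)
The plan is to construct the desired HNN extension $G = H*_F$ by using \cref{lem:pair_of_free} to produce suitable free subgroups of $H$, along which to perform the HNN extension, and then to compute the agrarian Betti numbers using a Mayer--Vietoris argument. Since $H$ is non-free, torsion-free, hyperbolic, and compact special with $b_1^{\mathcal D_{kH}}(H) \neq 0$, it is in particular non-elementary hyperbolic, and by \cref{cor:VCSinField} the group algebra $kH$ embeds in a division ring $\mathcal D_{kH}$. Thus \cref{lem:pair_of_free} furnishes non-isomorphic quasi-convex free subgroups $A, B \leqslant H$ with $A$ malnormal, $A \cap B^g = \{1\}$ for all $g \in H$, and such that the restriction $H^1(H; \mathcal D_{kH}) \rightarrow H^1(B; \mathcal D_{kH})$ is an isomorphism. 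Since $A$ and $B$ are non-isomorphic free groups, after passing to an appropriate finite-rank free subgroup one may arrange $A \cong B \cong F$ for a common free group $F$; I would form $G = H*_F$ as the HNN extension identifying the two copies of $F$ via the inclusions into $A$ and $B$. The malnormality and the trivial-intersection condition are exactly what is needed to ensure (by the combination theorems of Bestvina--Feighn, as used in \cite{KielakLinton_FbyZ}) that $G$ is hyperbolic, that the edge group $F$ is quasi-convex in $G$, and that $H$ remains quasi-convex; virtual compact specialness of $G$ should follow from Wise's hierarchy machinery for quasi-convex HNN extensions of compact special groups.

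Next I would establish the embedding $kG \hookrightarrow \mathcal D_{kG}$ and set up the homological computation. Because $F$ is free (hence locally indicable) and $H$ is Hughes-free embeddable by \cref{cor:VCSinField}, one can view $kG$ as sitting inside a division ring $\mathcal D$ via \cref{thm:graphOfHF} (the HNN extension is a one-edge graph of groups), and \cref{lem:eitherField} identifies the resulting agrarian Betti numbers with those computed from $\mathcal D_{kG}$. The Mayer--Vietoris sequence for the HNN extension with coefficients in $\mathcal D$ reads
\[
    \cdots \rightarrow H^{p-1}(F; \mathcal D) \rightarrow H^p(G; \mathcal D) \rightarrow H^p(H; \mathcal D) \xrightarrow{\ \partial\ } H^p(F; \mathcal D) \rightarrow \cdots,
\]
where the connecting map is the difference of the two restriction maps induced by the inclusions $F \hookrightarrow A \hookrightarrow H$ and $F \hookrightarrow B \hookrightarrow H$. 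Since $F$ is free, $H^p(F; \mathcal D) = 0$ for $p \geqslant 2$, so the sequence immediately gives $H^p(G; \mathcal D) \cong H^p(H; \mathcal D)$ for all $p \geqslant 3$, and in degree $p$ with $p \neq 0,1,2$ we read off $b_p^{\mathcal D_{kG}}(G) = b_p^{\mathcal D_{kH}}(H)$ as required.

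The heart of the calculation is the low-degree part of the sequence, and this is where I expect the main obstacle to lie. The point of choosing $B$ so that $H^1(H; \mathcal D_{kH}) \rightarrow H^1(B; \mathcal D_{kH})$ is an isomorphism is that the connecting map $\partial\colon H^1(H; \mathcal D) \rightarrow H^1(F; \mathcal D)$, being (up to the other restriction along $A$) essentially this isomorphism, is surjective; combined with the agrarian analogue of the Freiheitssatz and the injectivity statement of \cref{lem:agr_ind} applied to the relevant amalgam/extension, this forces $H^1(G; \mathcal D) = 0$, giving $b_1^{\mathcal D_{kG}}(G) = 0$. One must then check degree $2$: the tail of the sequence, together with the vanishing of $H^1(G;\mathcal D)$ and the surjectivity of $\partial$ in degree $1$, should yield $b_2^{\mathcal D_{kG}}(G) = b_2^{\mathcal D_{kH}}(H)$ after accounting for the contribution of the cokernel of the degree-$1$ connecting map, which vanishes precisely because of the isomorphism onto $H^1(B;\mathcal D)$. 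The delicate bookkeeping is to verify that all the maps in the Mayer--Vietoris sequence are the right restrictions and that the dimension counts over $\mathcal D$ are exact (using that $F$, $H$, and $G$ are all of type $\FP_\infty$ so the $\mathcal D$-Betti numbers are finite and additive along exact sequences); once the sequence is correctly identified, the computation follows the template of \cite[Proposition 6.4]{KielakLinton_FbyZ} verbatim with $\mathcal D_{\Q H}$ replaced by $\mathcal D_{kH}$. Finally, $\cd_k(G) = \cd_k(H)$ follows from the cohomological dimension of an HNN extension with free edge group, which can increase $\cd$ by at most one, together with the fact that the top-dimensional cohomology $H^{\cd_k(H)}(G;\mathcal D) \cong H^{\cd_k(H)}(H;\mathcal D)$ is nonzero by the isomorphism in high degrees.
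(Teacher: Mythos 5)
Your overall architecture coincides with the paper's: invoke \cref{lem:pair_of_free}, form the HNN extension $G = H*_A$ along an isomorphism $A \to B$, cite \cite[Theorem 6.3]{KielakLinton_FbyZ} for hyperbolicity and virtual compact specialness, run the Bieri long exact sequence using $H^p(A;\mathcal D) = 0$ for $p \geqslant 2$, and use \cref{lem:agr_ind} to control degree $2$. Two steps, however, do not survive scrutiny as written. First, your device of ``passing to an appropriate finite-rank free subgroup'' to force $A \cong B$ would destroy exactly the properties you need: replacing $B$ by a proper subgroup loses the isomorphism $H^1(H;\mathcal D_{kH}) \to H^1(B;\mathcal D_{kH})$, and a subgroup of a malnormal subgroup need not be malnormal. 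No such device is needed: the subgroups supplied by \cref{lem:pair_of_free} are already isomorphic (the word ``non-isomorphic'' in its statement is a slip --- the paper's proof of the proposition explicitly uses ``a pair of isomorphic free quasi-convex subgroups''), and one takes the HNN extension along any isomorphism $f\colon A \to B$.

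Second, surjectivity of the connecting map $H^1(H;\mathcal D) \to H^1(A;\mathcal D)$ does not by itself force $H^1(G;\mathcal D) = 0$; it only gives $\dim H^1(G;\mathcal D) = b_1^{\mathcal D}(H) - b_1^{\mathcal D}(A)$, so you still need $b_1^{\mathcal D}(H) = b_1^{\mathcal D}(A)$. This is precisely what the isomorphism $H^1(H;\mathcal D_{kH}) \cong H^1(B;\mathcal D_{kH})$ together with $A \cong B$ buys (both are free of the same rank, so $b_1^{\mathcal D}(A) = b_1^{\mathcal D}(B) = b_1^{\mathcal D}(H)$). The paper packages this as the Euler-characteristic identity $0 = b_1(G) - b_1(H) + b_1(A) - b_2(G) + b_2(H) = b_1(G) - b_2(G) + b_2(H)$ read off the five-term exact sequence, which combined with $b_2(G) \leqslant b_2(H)$ from \cref{lem:agr_ind} yields $b_1(G) = 0$ and $b_2(G) = b_2(H)$ simultaneously; make this count explicit rather than deferring to ``bookkeeping.'' Two smaller points: the paper produces the coefficient division ring via \cref{cor:VCSinField} so that \cref{lem:eitherField} applies verbatim, whereas the Betti numbers of the \cref{thm:graphOfHF} embedding are not a priori identified with $b_p^{\mathcal D_{kG}}$; and for the cohomological dimension, $\cd_k(G) \geqslant \cd_k(H)$ is immediate from $H \leqslant G$ --- your appeal to non-vanishing of $H^{\cd_k(H)}(H;\mathcal D)$ is unjustified, since $\cd_k$ is witnessed by some module, not necessarily by $\mathcal D$.
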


\begin{proof}
    By \cref{lem:pair_of_free}, there is a pair of isomorphic free quasi-convex subgroups $A,B \leqslant H$ such that $A$ is malnormal and intersects every conjugate of $B$ trivially and the restriction 
    \[
        H^1(H; \mathcal D_{kH}) \rightarrow H^1(B; \mathcal D_{kH})
    \]
    is an isomorphism.
    
    Now let $f \colon A \rightarrow B$ be any isomorphism and let $G = H*_A$ be the corresponding HNN extension. By \cite[Theorem 6.3]{KielakLinton_FbyZ}, $G$ is virtually compact special. Moreover, since $G$ is the HNN extension of a torsion-free group, it is also torsion-free, and therefore $kG$ embeds in a division ring $\mathcal D$ by \cref{cor:VCSinField}. From \cite[Theorem 3.1]{BieriHNN}, there is a long exact sequence
    \[
        \cdots \rightarrow H^p(G; \mathcal D) \rightarrow H^p(H; \mathcal D) \rightarrow H^p(A; \mathcal D) \rightarrow H^{p+1}(G; \mathcal D) \rightarrow \cdots.
    \]
    Since $A$ is free, $H^p(A; \mathcal D) = 0$ for $p \geqslant 2$, and this immediately implies that $b_p^{\mathcal D_{kG}}(G) = b_p^{\mathcal D_{kH}}(H)$ for $p \geqslant 3$, where we have used \cref{lem:eitherField}.

    The interesting portion of the long exact sequence is then
    \[
        0 \rightarrow H^1(G; \mathcal D) \rightarrow H^1(H; \mathcal D) \rightarrow H^1(A; \mathcal D) \rightarrow H^2(G; \mathcal D) \rightarrow H^2(H; \mathcal D) \rightarrow 0,
    \]
    whence we obtain the equation
    \begin{align*}
        0 &= b_1^{\mathcal D_{kG}}(G) - b_1^{\mathcal D_{kH}}(H) + b_1^{\mathcal D_{kA}}(A) - b_2^{\mathcal D_{kG}}(G) + b_2^{\mathcal D_{kH}}(H) \\
        &= b_1^{\mathcal D_{kG}}(G) - b_2^{\mathcal D_{kG}}(G) + b_2^{\mathcal D_{kH}}(H).
    \end{align*}
    But $b_2^{\mathcal D_{kG}}(G) \leqslant b_2^{\mathcal D_{kH}}(H)$ by \cref{lem:agr_ind}, so we must have $b_1^{\mathcal D_{kG}}(G) = 0$ and $b_2^{\mathcal D_{kG}}(G) = b_2^{\mathcal D_{kH}}(H)$.

    The claim about cohomological dimensions follows exactly as in the proof of \cite[Proposition 6.4]{KielakLinton_FbyZ}.
\end{proof}

As a consequence, we can reprove \cite[Theorem 1.10]{KielakLinton_FbyZ} over arbitrary fields, thus confirming \cite[Conjecture 6.7]{KielakLinton_FbyZ}.

\begin{thm}\label{thm:KLmain_agr}
    Let $k$ be a division ring and let $H$ be hyperbolic, virtually compact special, and suppose that $\cd_k(H) \geqslant 2$. Then, there exists a finite index subgroup $L \leqslant H$ and a map of short exact sequences
    \[
        \begin{tikzcd}
            1 \arrow[r] & K \arrow[d, hook] \arrow[r] & L \arrow[d, hook] \arrow[r] & \Z \arrow[d, Rightarrow, no head] \arrow[r] & 1 \\
            1 \arrow[r] & N \arrow[r]                 & G \arrow[r]                 & \Z \arrow[r]                                & 1
        \end{tikzcd}
    \]
    such that
    \begin{enumerate}[label = (\arabic*)]
        \item $G$ is hyperbolic, compact special, and contains $L$ as a quasi-convex subgroup.
        \item $\cd_k(G) = \cd_k(H)$.
        \item $N$ is finitely generated.
        \item If $b_p^{\mathcal D_{kH}}(H) = 0$ for all $2 \leqslant p \leqslant n$, then $N$ is of type $\FP_n(k)$.
        \item If $b_p^{\mathcal D_{kH}}(H) = 0$ for all $p \geqslant 2$, then $\cd_k(N) = \cd_k(H) - 1$.
    \end{enumerate}
\end{thm}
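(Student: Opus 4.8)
The plan is to transcribe the proof of \cite[Theorem 1.10]{KielakLinton_FbyZ} (our \cref{thm:KLmain}) essentially verbatim, replacing $\Q$ by $k$ and each $L^2$-Betti number $b_i^{(2)}$ by the agrarian Betti number $b_i^{\mathcal D_{kG}}$. Their argument uses only three $L^2$-theoretic inputs, and we now have an agrarian substitute for each: the existence of a division ring containing $kG$ for $G$ torsion-free virtually compact special, which they extract from Schreve's Atiyah conjecture and the Linnell skew-field and which \cref{cor:VCSinField} supplies in arbitrary characteristic; the HNN construction of their Proposition 6.4, now \cref{prop:HNN}; and the fibring-and-finiteness dictionary, for which we use the agrarian theory of \cite{Fisher_improved} in place of the $L^2$ fibring theorem. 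A point worth isolating at the outset is that the output group $G$ is \emph{compact} special, so that it is simultaneously residually torsion-free nilpotent (hence locally indicable, whence $\mathcal D_{kG}$ exists and is Hughes-free by \cite[Theorem 1.1]{JaikinZapirain2020THEUO}) and a subgroup of a right-angled Artin group (hence RFRS). The first fact makes the agrarian Betti numbers appearing in the statement honest invariants; the second is what allows the fibring theorem of \cite{Fisher_improved} to be applied to $G$ directly.

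First I would arrange that the first agrarian Betti number vanishes. If $b_1^{\mathcal D_{kH}}(H) \neq 0$, then after passing to a compact special subgroup of finite index---which is non-free because $\cd_k(H) \geqslant 2$, and on which $b_1$ is still nonzero by the scaling formula \cref{prop:props_agr}---one applies \cref{prop:HNN} to replace it by a hyperbolic virtually compact special HNN extension in which $b_1$ has been killed, while all higher agrarian Betti numbers and $\cd_k$ are preserved and the group is retained as a quasi-convex subgroup; if $b_1^{\mathcal D_{kH}}(H) = 0$ this step is skipped. Throughout this stage \cref{lem:eitherField} identifies the agrarian Betti numbers computed in the division ring of \cref{thm:VCS_field} with the $b_p^{\mathcal D_{kG}}$ of the statement, and only the vanishing of these numbers is needed, which is insensitive to the finite-index rescalings. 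Passing once more to a compact special subgroup $G$ of finite index and setting $L$ to be the intersection of $G$ with the retained copy of a finite-index subgroup of $H$ yields the groups of the diagram; property~(1) (quasi-convexity of $L$ in $G$) and property~(2), $\cd_k(G) = \cd_k(H)$, are then immediate from \cref{prop:HNN} and permanence of $\cd_k$ under commensurability. Finally, since $b_1^{\mathcal D_{kG}}(G) = 0$ and $G$ is of type $\F$, the agrarian fibring theorem of \cite{Fisher_improved} (applicable because $G$ is RFRS with Hughes-free $\mathcal D_{kG}$) produces, after a last finite-index passage, an epimorphism $\varphi \colon G \to \Z$ whose kernel $N$ is finitely generated (property~(3)); the type $\FP_n(k)$ of $N$ (property~(4)) and, when all higher Betti numbers vanish, the equality $\cd_k(N) = \cd_k(G) - 1 = \cd_k(H) - 1$ (property~(5)) are read off from the vanishing of $b_p^{\mathcal D_{kG}}(G)$ for $2 \leqslant p \leqslant n$ and for all $p \geqslant 2$ respectively, using that these agree with $b_p^{\mathcal D_{kH}}(H)$ for $p \geqslant 2$ and that $b_1^{\mathcal D_{kG}}(G) = 0$. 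Choosing $\varphi$ within the open agrarian BNS-type invariant so that its restriction to $L$ is surjective then gives the compatible top row $1 \to K \to L \to \Z \to 1$ with $K = L \cap N$.

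The main obstacle is the last step: upgrading the vanishing $b_1^{\mathcal D_{kG}}(G) = 0$ to an honest algebraic fibration of $G$ with finitely generated kernel, and then governing the higher finiteness properties of that kernel by the vanishing of the higher agrarian Betti numbers, all in positive characteristic where no affiliated-operator technology is available. This is exactly the content of the agrarian fibring dictionary of \cite{Fisher_improved}, which depends on $\mathcal D_{kG}$ being Hughes-free---available here precisely because $G$ is compact special and hence locally indicable---together with the RFRS property of special groups to pass from a character in the agrarian BNS invariant to an actual fibration. Granting these facts, which are intrinsic to the class of compact special groups and independent of the characteristic, the genuinely new ingredient relative to \cite{KielakLinton_FbyZ} is the division ring furnished by \cref{cor:VCSinField}; with it in hand, every remaining step is a faithful translation of their argument.
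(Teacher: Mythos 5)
Your proposal is correct and follows essentially the same route as the paper: both reduce to transcribing the Kielak--Linton argument after substituting the three $L^2$-inputs with their agrarian counterparts, namely the division ring from \cref{cor:VCSinField}, the HNN step \cref{prop:HNN}, and \cite[Theorem B]{Fisher_improved} in place of the $L^2$ fibring theorem (together with the scaling and vanishing properties of \cref{prop:props_agr}). Your write-up is somewhat more explicit than the paper's about how the finite-index passages and the choice of character interact, but the underlying argument is the same.
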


\begin{proof}
    Now that we have established \cref{prop:HNN} (Kielak and Linton prove the $L^2$ case in \cite[Proposition 6.4]{KielakLinton_FbyZ}), the proof is very similar to the one in \cite{KielakLinton_FbyZ}; consequently, we only highlight which parts of $L^2$-theory are used in the proof and provide references for the corresponding statements in the agrarian setting.

    First, Kielak--Linton use the fact that an infinite group has vanishing $0^{th}$ $L^2$-Betti number; this is also true in the agrarian setting by \cref{prop:props_agr}\ref{item:0}. Next, they use the fact that $L^2$-Betti numbers scale with the index when passing to finite index subgroups, which is true for agrarian Betti numbers with Hughes-free coefficients by \cref{prop:props_agr}\ref{item:scaling}. Finally, they quote \cite[Theorem A]{Fisher_improved} (see \cite[Theorem 6.1]{KielakLinton_FbyZ}), which relates the $L^2$-Betti numbers of compact special groups (and more generally of RFRS groups) to virtual fibring with kernels of type $\FP_n(\Q)$. Luckily, the analogous result holds with agrarian Betti numbers and finiteness properties over arbitrary fields \cite[Theorem B]{Fisher_improved}. \qedhere
\end{proof}

\section{Hughes-freeness of the Lewin--Lewin division ring} \label{sec:LL}

In \cite{LewinLewinORTF} J.~Lewin and T.~Lewin showed that for every division ring $k$ and every torsion-free one-relator group $G$, the group algebra $kG$ can be embedded in a division ring, which we denote by $\overline{kG}$, following their notation. They pointed out that they did not know whether $\overline{kG}$ is a universal $kG$-division ring of fractions. However, what they already knew is that if there were a universal $kG$-division ring of fractions, then it would be $\overline{kG}$. In this section we give evidence in this direction by showing that the Lewin--Lewin division ring is Hughes-free for virtually compact special one-relator groups, and if $k$ is of characteristic zero then it is always Hughes-free.

Key algebraic structures in the argument of Lewin--Lewin are firs and semifirs. A nonzero ring $R$ is a {\it free ideal ring} (or {\it fir}) if every left and every right ideal is a free $R$-module of unique rank. {\it Semifirs} are defined similarly, the only difference being that only ask finitely generated left and right ideals to be free of unique rank. We will make heavy use of the following two powerful theorems of Cohn.

\begin{thm}[\cite{Cohnfreeproducts_3}]\label{thm:2semifir_semifir}
    The coproduct of a family of semifirs over a common division subring is again a semifir.
\end{thm}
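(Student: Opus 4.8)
The plan is to feed the hypothesis on the factors into Cohn's internal characterisation of semifirs. Recall that a nonzero ring $R$ is a semifir if and only if it is an \emph{$n$-fir} for every $n$, i.e.\ every relation $\sum_{i=1}^{n} a_i b_i = 0$ with $a_i, b_i \in R$ is \emph{trivial}: there is $P \in \GL_n(R)$ such that, setting $(a_1', \dots, a_n') = (a_1, \dots, a_n)P$ and $(b_1', \dots, b_n')^{\mathsf T} = P^{-1}(b_1, \dots, b_n)^{\mathsf T}$, for each $i$ one has $a_i' = 0$ or $b_i' = 0$. Write $R = \coprod_k R_\lambda$ for the coproduct. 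A key simplification is that any single relation $\sum a_i b_i = 0$ involves only finitely many of the factors, so it already lives in the subcoproduct $\coprod_{\lambda \in S} R_\lambda$ over a finite set $S$; thus it suffices to trivialise relations in coproducts of \emph{finite} families, and by associativity of the coproduct and induction on $|S|$ it suffices to treat $R_1 \ast_k R_2$ with $R_1, R_2$ semifirs.

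The cleanest route to the finite case is homological, via Bergman's coproduct theorems. Cohn characterises semifirs as exactly the \emph{projective-free} rings of weak global dimension at most one (see \cite[\S 2.3]{cohn06FIR}), so it is enough to verify these two properties for $R = \coprod_k R_\lambda$. For the dimension, Bergman's global-dimension formula for coproducts over a division ring $k$ gives that the weak global dimension of $R$ is at most $\max\bigl(1, \sup_\lambda \operatorname{w.gl.dim} R_\lambda\bigr)$; as each semifir $R_\lambda$ has weak global dimension at most one, so does $R$. For projective-freeness, Bergman's structure theory of modules over a coproduct shows that every finitely generated projective $R$-module is induced from projective modules over the factors, and that a coproduct of projective-free $k$-rings is again projective-free; since every semifir is projective-free, $R$ is projective-free. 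Combining the two properties shows $R$ is a semifir, and (as noted above) the infinite case follows at once since relations are supported on finite subfamilies.

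An equivalent, more self-contained route proves the $n$-fir condition directly. Fixing $k$-linear complements $R_\lambda = k \oplus V_\lambda$, every element of the coproduct has a normal form as a $k$-combination of alternating monomials $v_1 \cdots v_m$ whose consecutive factors lie in distinct $V_\lambda$, which assigns to each element a well-defined \emph{length}. Given a relation $\sum a_i b_i = 0$, one examines the component of maximal length: these top terms must cancel, and that cancellation is governed by a relation taking place within a single factor $R_\lambda$, which can be trivialised precisely because $R_\lambda$ is a semifir. Multiplying the relation by the resulting invertible matrix strictly decreases its complexity, and the argument closes by induction on length.

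The main obstacle in either approach is the same piece of bookkeeping, namely the proof of Bergman's coproduct theorems (equivalently, of the normal-form and length machinery used in the previous paragraph): one must make the notion of length in a coproduct over a division ring precise, rule out any unexpected cancellation between distinct factors, and verify that the top-length analysis genuinely reduces to a relation inside one factor. Once this normal-form machinery is in place, the passage from the semifir property of each $R_\lambda$ to that of the coproduct is formal; this is the technical heart of Cohn's theorem.
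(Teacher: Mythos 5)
The first thing to note is that the paper does not prove this statement at all: it is quoted verbatim from Cohn (the citation \cite{Cohnfreeproducts_3}) and used as a black box, both here and in the proof of \cref{thm:graphOfHF}. So there is no internal argument to compare yours against; what you have written is a roadmap of the two standard external proofs (Bergman's homological coproduct theorems, and Cohn's original normal-form/length analysis). As a roadmap it is essentially accurate, and your reduction of the infinite case to finite subfamilies via the support of a single relation is exactly how the paper itself handles infinite graphs of rings in \cref{thm:graphOfHF} (citing \cite[\S1.1 Exercise 3]{Cohn_FreeRingsRelations}). But it is not a proof: both of your routes bottom out in precisely the statement you defer --- the normal-form theorem for coproducts over a division ring and the module-theoretic consequences Bergman extracts from it --- and you say so yourself in the last paragraph. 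Since that is the entire mathematical content of the theorem, the proposal should be read as a correct identification of where the proof lives rather than as a proof.

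Within the homological route there is also one characterisation that needs repairing. A semifir is \emph{not} simply a projective-free ring of weak global dimension at most one: weak dimension $\leqslant 1$ only gives that finitely generated one-sided ideals are \emph{flat}, and finitely generated flat modules need not be projective without a coherence-type hypothesis, so the trivialisation of a relation $\sum a_i b_i = 0$ via a splitting $R^n \cong \ker\phi \oplus I$ does not follow. The characterisation Cohn and Bergman actually use is that a semifir is a projective-free ring in which every finitely generated one-sided ideal is \emph{projective} (equivalently, a projective-free weakly semihereditary ring), and Bergman's theorems are then applied in the form ``finitely generated projectives over the coproduct are induced from the factors'' together with the preservation of semiheredity. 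With that substitution your first route is the standard modern proof; your second route is Cohn's original one. Either way, the heavy lifting remains uncited and uncarried-out, which is acceptable only because the paper itself treats the result as a quotation.
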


We recall that an epic $R$-division ring $\mathcal{D}$ is universal if $\rk_{\mathcal{D}} \geqslant \rk_{\mathcal{E}}$ for every $R$-division ring $\mathcal{E}$. Whenever it exists, we will denote it by $U(R)$.

\begin{thm}[{\cite[Corollary 7.5.14]{cohn06FIR}}]\label{thm:semifirDivRing}
    Every semifir embeds into a universal division ring of fractions.
\end{thm}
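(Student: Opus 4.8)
The plan is to realize the universal field of fractions as a universal matrix localization, following Cohn's theory of the inner rank and Sylvester rank functions. First I would recall the relevant invariant: for a matrix $A$ over $R$, its \emph{inner rank} $\rho(A)$ is the least integer $r$ for which $A$ factors as a product of an $m \times r$ matrix and an $r \times n$ matrix, and a square matrix is called \emph{full} when its inner rank equals its order. Over any epic $R$-division ring a non-full matrix $A = PQ$ (with inner factorisation through $R^r$, $r$ less than the order) necessarily becomes singular, so the full matrices are exactly the candidates for inversion. The plan is therefore to form the universal localisation $R \to R_\Sigma$ at the set $\Sigma$ of all full matrices and to show that $R_\Sigma$ is, or has as residue division ring, the desired universal $R$-division ring.

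The crux is to prove that a semifir is a \emph{Sylvester domain}, that is, that its inner rank obeys Sylvester's law of nullity: whenever $A$ is $\ell \times m$ and $B$ is $m \times n$ one has $\rho(AB) \geq \rho(A) + \rho(B) - m$ (equivalently, $\rho(A) + \rho(B) \leq m$ whenever $AB = 0$). This is where the defining property of a semifir — that every finitely generated one-sided ideal is free of unique rank — is used essentially: a relation among the columns of $B$ cuts out a finitely generated submodule which, being free of a well-defined rank, allows one to \emph{trivialise} the relation and factor the product through a free module of the predicted rank. I expect this law-of-nullity step to be the \textbf{main obstacle}; everything after it is formal localisation machinery.

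Granting that $R$ is a Sylvester domain, I would then invoke Cohn's criterion that the complement of $\Sigma$, the set of all non-full matrices, forms a \emph{prime matrix ideal}. This guarantees that $R_\Sigma$ is a local ring whose residue division ring $\mathcal U$ is an epic $R$-division ring, and that the canonical map $R \to \mathcal U$ is injective, since every nonzero element of $R$ is a full $1 \times 1$ matrix and hence becomes a unit. Thus $R$ embeds into the division ring $\mathcal U$. For universality I would translate into the rank-function language of \cref{subsec:slyv}: any epic $R$-division ring $\mathcal E$ inverts only full matrices, so the full matrices lie in $\Sigma$ and the structure map $R \to \mathcal E$ factors through $R_\Sigma$, producing a specialisation $\mathcal U \to \mathcal E$. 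Comparing the induced rank functions yields $\rk_{\mathcal U} \geq \rk_{\mathcal E}$ for every $R$-division ring $\mathcal E$, so $\mathcal U = U(R)$ is the universal division ring of fractions, completing the proof.
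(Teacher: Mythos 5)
This statement is quoted verbatim from Cohn's book and the paper offers no proof of its own, so there is nothing internal to compare against; your outline is a faithful reconstruction of Cohn's argument (semifir $\Rightarrow$ Sylvester domain via trivialisation of relations, the non-full matrices form the least prime matrix ideal, and localisation at the full matrices yields the universal epic $R$-division ring), and you correctly identify the law of nullity as the step where the semifir hypothesis does the work. The one phrase to tighten is that an arbitrary epic $R$-division ring $\mathcal E$ need not invert \emph{all} full matrices, so $R \to \mathcal E$ does not literally factor through $R_\Sigma$; universality is obtained more directly from $\rk_{\mathcal E}(A) \leq \rho(A) = \rk_{\mathcal U}(A)$, the first inequality because an inner factorisation over $R$ persists over $\mathcal E$ and the equality being Cohn's computation of ranks over the universal localisation, both of which your setup already supplies.
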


We also record two useful results for future reference.

\begin{lem}[\cite{Cohnfreeprodskewfields}]\label{lem:univ_equality}
    Let $R_1,R_2$ be semifirs with a common division subring $\mathcal{D}$. Then $U(R_1 *_{\mathcal{D}} R_2)\cong U(R_1 *_{\mathcal{D}} U(R_2))$.
\end{lem}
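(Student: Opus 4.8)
The plan is to compare the two universal objects via their Sylvester matrix rank functions and apply \cref{lem:equal_rk}. Write $S = R_1 *_{\mathcal D} R_2$ and $S' = R_1 *_{\mathcal D} U(R_2)$. As $\mathcal D$ is a common sub-division ring of $R_1$, of $R_2$, and hence of the division ring $U(R_2) \supseteq R_2$, both $S$ and $S'$ are coproducts of semifirs over $\mathcal D$, so they are themselves semifirs by \cref{thm:2semifir_semifir} and possess universal division rings of fractions $U(S)$ and $U(S')$ by \cref{thm:semifirDivRing}. The inclusions $S \hookrightarrow S' \hookrightarrow U(S')$ make $U(S')$ into an $S$-division ring, so it only remains to show that the rank functions of $U(S)$ and $U(S')$ agree on matrices over $S$.

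One inequality is formal: since $U(S)$ is universal, its rank function dominates that of every $S$-division ring, and in particular $\rk_{U(S')} \leqslant \rk_{U(S)}$ on matrices over $S$.

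For the reverse inequality I would construct a ring homomorphism $\psi \colon S' \to U(S)$ extending the inclusion $S \hookrightarrow U(S)$. Granting a ring embedding $U(R_2) \hookrightarrow U(S)$ compatible with $R_2 \hookrightarrow S \hookrightarrow U(S)$, the universal property of the $\mathcal D$-ring coproduct $S' = R_1 *_{\mathcal D} U(R_2)$ produces $\psi$ out of the two inclusions $R_1 \hookrightarrow U(S)$ and $U(R_2) \hookrightarrow U(S)$, which agree on $\mathcal D$. This $\psi$ endows $U(S)$ with the structure of an $S'$-division ring, so universality of $U(S')$ gives $\psi^{\#}(\rk_{U(S)}) \leqslant \rk_{U(S')}$ on matrices over $S'$; restricting to matrices over $S$, where $\psi$ is the identity, yields $\rk_{U(S)} \leqslant \rk_{U(S')}$. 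Together with the previous paragraph the two rank functions coincide on $S$, and \cref{lem:equal_rk} delivers the asserted $(R_1 *_{\mathcal D} R_2)$-isomorphism $U(S) \cong U(S')$.

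The crux --- and the only step that uses more than the formal theory of universal division rings --- is the compatible embedding $U(R_2) \hookrightarrow U(S)$, equivalently the assertion that the division closure of $R_2$ inside $U(S)$ carries the universal rank function of $R_2$. This is precisely the statement that the factors of a coproduct of semifirs embed into the universal field of fractions of the coproduct, i.e.\ that $U(R_1 *_{\mathcal D} R_2)$ is the skew-field coproduct of $U(R_1)$ and $U(R_2)$, which is Cohn's coproduct theorem \cite{Cohnfreeprodskewfields} (see also \cite{cohn06FIR}). Indeed, once this identification is available one may bypass rank functions altogether: applying the coproduct theorem to the semifir $S'$ and using that the universal field of fractions of a division ring $\mathcal E$ is $\mathcal E$ itself, one finds that $U(S')$ is the field coproduct of $U(R_1)$ and $U(U(R_2)) = U(R_2)$, which is the same field coproduct as $U(S)$. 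This is where I expect all the difficulty to reside, since it is the one ingredient genuinely using the coproduct structure of $S$ rather than the abstract properties of universal division rings.
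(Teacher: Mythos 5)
The paper gives no argument for this lemma at all --- it is imported wholesale from Cohn \cite{Cohnfreeprodskewfields} --- so there is no in-paper proof to compare against; I can only assess your reconstruction, which is correct and honestly locates where the content lies. The formal half (both $S=R_1*_{\mathcal D}R_2$ and $S'=R_1*_{\mathcal D}U(R_2)$ are semifirs by \cref{thm:2semifir_semifir}, hence have universal fields of fractions; universality gives $\rk_{U(S')}\leqslant\rk_{U(S)}$ on $S$; a homomorphism $\psi\colon S'\to U(S)$ extending $S\hookrightarrow U(S)$ gives the reverse inequality; \cref{lem:equal_rk} finishes) is sound, with two small points you should make explicit: (i) to apply \cref{lem:equal_rk} you need $U(S')$ to be an \emph{epic} $S$-division ring, which holds because $U(R_2)$ is generated as a division ring by $R_2$, so $S'$ lies in the division closure of $S$ in $U(S')$; (ii) the universal property of the coproduct that produces $\psi$ needs the two maps $R_1\to U(S)$ and $U(R_2)\to U(S)$ to agree on $\mathcal D$, which they do by construction. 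You are right that the only non-formal input is the existence of an embedding $U(R_2)\hookrightarrow U(S)$ over $R_2$, equivalently that the division closure of $R_2$ in $U(S)$ carries the universal rank function of $R_2$; this is exactly Cohn's theorem that $U(R_1*_{\mathcal D}R_2)$ is the field coproduct $U(R_1)\circ_{\mathcal D}U(R_2)$, and your closing observation --- that once this identification is available the lemma follows in one line from $U(S')\cong U(R_1)\circ_{\mathcal D}U(U(R_2))=U(R_1)\circ_{\mathcal D}U(R_2)\cong U(S)$ --- is in fact the cleanest route and essentially how the statement is obtained in Cohn. So: no gap relative to what the paper itself assumes, and a correct reduction; just be aware that the "crux" you defer to Cohn is not materially easier than the lemma itself.
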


\begin{prop}[{\cite[Theorem 8.1]{JaikinLopez_Atiyah}}]\label{prop:rk_max_HF}
    Let $G$ be a locally indicable group and assume there exists a Hughes-free $kG$-division ring $\mathcal{D}_{kG}$. Then the Sylvester matrix rank function $\rk_{\mathcal{D}_{kG}}$ is maximal in $\mathbb{P}_{div}(kG)$.
\end{prop}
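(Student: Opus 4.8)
The plan is to translate the statement into one about division rings and exploit the uniqueness of Hughes-free division rings. Recall from \cref{subsec:slyv} (via Malcolmson \cite{Malcolmson} and \cref{lem:equal_rk}) that an element of $\mathbb{P}_{div}(kG)$ is the same datum as an epic $kG$-division ring $\mathcal{E}$, and that the rank function equals $\rk_{\mathcal{D}_{kG}}$ precisely when $\mathcal{E}\cong\mathcal{D}_{kG}$ as $kG$-division rings. So I would fix $\rk'\in\mathbb{P}_{div}(kG)$ with $\rk'\geqslant\rk_{\mathcal{D}_{kG}}$ and associated epic division ring $\mathcal{E}$, and aim to prove $\mathcal{E}\cong\mathcal{D}_{kG}$. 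By Hughes' uniqueness theorem \cite{HughesDivRings1970} it is enough to show that $\mathcal{E}$ is \emph{itself} a Hughes-free $kG$-division ring. Evaluating the inequality on $1\times 1$ matrices gives $\rk'(a)\geqslant\rk_{\mathcal{D}_{kG}}(a)=1$ for every nonzero $a\in kG$, so $kG\hookrightarrow\mathcal{E}$ and $\mathcal{E}$ is a faithful epic $kG$-division ring; only the linear independence condition (HF) remains to be checked.

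I would first reduce to finitely generated $G$. For any finitely generated $H\leqslant G$, the division closure of $kH$ in $\mathcal{D}_{kG}$ is the Hughes-free ring $\mathcal{D}_{kH}$, so $\rk_{\mathcal{D}_{kG}}|_{kH}=\rk_{\mathcal{D}_{kH}}$ and hence $\rk'|_{kH}\geqslant\rk_{\mathcal{D}_{kH}}$ in $\mathbb{P}_{div}(kH)$. Since every matrix over $kG$ has its entries in some such $kH$, maximality for finitely generated groups would suffice: in a poset nothing lies strictly above a maximal element, so $\rk'|_{kH}\geqslant\rk_{\mathcal{D}_{kH}}$ would force $\rk'|_{kH}=\rk_{\mathcal{D}_{kH}}$, and these equalities assemble to $\rk'=\rk_{\mathcal{D}_{kG}}$. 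So assume $G$ is finitely generated. Condition (HF) for $\mathcal{E}$ must be verified at every finitely generated $H\leqslant G$ together with every quotient $H/K\cong\langle tK\rangle\cong\Z$; the mechanism is identical for each, so I describe a single datum $(H,K,t)$. By Hughes-freeness of $\mathcal{D}_{kG}$ (and the subgroup property $\mathcal{D}_{kK}\subseteq\mathcal{D}_{kH}$) one has $\mathcal{D}_{kH}=\Ore(\mathcal{D}_{kK}*\langle t\rangle)$, where the skew-Laurent ring $\mathcal{D}_{kK}*\langle t\rangle$ is an Ore domain. Writing $\mathcal{D}_{\mathcal{E}}(K)$ and $\mathcal{D}_{\mathcal{E}}(H)$ for the division closures of $kK$ and $kH$ in $\mathcal{E}$, condition (HF) at $(H,K,t)$ is exactly the injectivity of the natural map $\mathcal{D}_{\mathcal{E}}(K)*\langle t\rangle\to\mathcal{E}$, equivalently $\mathcal{D}_{\mathcal{E}}(H)=\Ore(\mathcal{D}_{\mathcal{E}}(K)*\langle t\rangle)$.

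The engine I would use is the description of $\rk_{\mathcal{D}_{kH}}$ as the \emph{natural extension} of $\rk_{\mathcal{D}_{kK}}$: among all rank functions on $kH$ restricting to $\rk_{\mathcal{D}_{kK}}$ on $kK$, the one induced by $\Ore(\mathcal{D}_{kK}*\langle t\rangle)$ is the largest, since the $\Z$-grading of $kH=kK*\langle t\rangle$ lets one bound the rank of any matrix by leading-term (Ore-denominator) estimates over $\mathcal{D}_{kK}*\langle t\rangle$. Granting this, the argument would finish in two steps: (i) upgrade $\rk'|_{kK}\geqslant\rk_{\mathcal{D}_{kK}}$ to an equality, so that $\mathcal{D}_{\mathcal{E}}(K)\cong\mathcal{D}_{kK}$ over $kK$; and (ii) feed this into the natural-extension bound to force $\rk'|_{kH}=\rk_{\mathcal{D}_{kH}}$, which is the required crossed-product injectivity. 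The hard part is step (i), and it is the reason the theorem is genuinely difficult: one cannot argue by naive induction, because the kernel $K$ of a map to $\Z$ is typically infinitely generated and its finitely generated subgroups are again finitely generated locally indicable groups of no smaller ``complexity'' (already visible for free groups, whose $\Z$-kernels are again free), so no well-founded induction parameter decreases. This is precisely where the full strength of the Jaikin-Zapirain--L\'opez-\'Alvarez construction is needed: one realises $\rk_{\mathcal{D}_{kK}}$ as a controlled limit of explicit rank functions over the directed system of finitely generated subgroups of $K$ and promotes the inequality on $kK$ to an equality by an approximation argument that is compatible with all the $\Z$-quotients simultaneously, rather than resolving them one layer at a time.
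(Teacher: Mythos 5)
The paper does not actually prove this proposition: it is imported wholesale as \cite[Theorem 8.1]{JaikinLopez_Atiyah}, so there is no internal argument to compare yours against. Judged on its own terms, your reduction framework is sound. The Malcolmson correspondence between $\mathbb{P}_{div}(kG)$ and epic $kG$-division rings, the observation that it suffices to show that the epic division ring $\mathcal E$ attached to a rank function $\rk'\geqslant\rk_{\mathcal D_{kG}}$ is itself Hughes-free (and then invoke Hughes' uniqueness theorem), the reduction to finitely generated $G$, and your step (ii) --- that once $\rk'|_{kK}=\rk_{\mathcal D_{kK}}$ is known, the universality of the Ore field of fractions of the skew Laurent ring $\mathcal D_{kK}[t^{\pm1};\tau]$ (a principal ideal domain, hence a fir) forces $\rk'|_{kH}\leqslant\rk_{\mathcal D_{kH}}$ and hence equality --- are all correct.

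The gap sits exactly where you locate it, and you do not close it. Step (i), upgrading $\rk'|_{kK}\geqslant\rk_{\mathcal D_{kK}}$ to an equality on the typically infinitely generated kernel $K$, \emph{is} the theorem: equality of integer-valued rank functions on $kK$ is checked on matrices supported on finitely generated subgroups $L\leqslant K$, so it is equivalent to maximality of $\rk_{\mathcal D_{kL}}$ for every such $L$ --- an instance of the very statement being proved, with no visible decrease in difficulty. Your final paragraph replaces a proof of this with an appeal to ``the full strength of the Jaikin-Zapirain--L\'opez-\'Alvarez construction'' and an unspecified ``approximation argument compatible with all the $\Z$-quotients simultaneously''; no mechanism is supplied for why any such approximation would promote the inequality to an equality, so as written the argument is circular. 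Your diagnosis that ``no well-founded induction parameter decreases'' is also misleading: Hughes' uniqueness proof, and the arguments in \cite{JaikinLopez_Atiyah} that this proposition rests on, run an induction on precisely such a parameter, namely Hughes' complexity of a finite subset of a locally indicable group, which does decrease when one passes from data in $H$ to the associated data in finitely generated subgroups of $K$. Without introducing that (or an equivalent) well-founded induction, your outline establishes the easy reductions surrounding the theorem but not the theorem itself.
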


\begin{rem}
    Note that \cref{prop:rk_max_HF} does not imply that Hughes-free division rings are necessarily universal, since different rank functions may not be comparable. However, if there exists both a universal and a Hughes-free division rings of fractions for $k*G$, ring then they must coincide
\end{rem}

We now prove two general lemmas about Hughes-free division rings which will be used right afterwards.

\begin{lem}\label{lem:HF_amalg}
    Let $G$ be a locally indicable group which splits as a graph of groups $\mathscr G_\Gamma = (G_v, G_e; \Gamma)$. If $\mathcal{D}_{k*G}$ exists, then
    \[
        U(\mathscr{DG}_\Gamma) \cong \mathcal{D}_{k*G}
    \]
    as $kG$-rings, where $\mathscr{DG}_\Gamma$ is the graph of division rings $(\mathcal D_{kG_v}, \mathcal D_{kG_e}; \Gamma)$.
\end{lem}

\begin{proof}
    Recall, from \cref{sec:GraphsRings}, that $\mathscr{DG}_\Gamma$ is well-defined by uniqueness of Hughes-free division rings and it is a semifir and thus has a universal division ring. The embeddings of the division rings $\mathcal{D}_{k*G_v}$ and $\mathcal{D}_{k*G_e}$ into $\mathcal{D}_{k*G}$ induce a homomorphism $\mathscr{DG}_\Gamma \rightarrow \mathcal D_{k*G}$, which fits into the following commutative diagram
    \[
        \begin{tikzcd}
            kG \arrow[r, hook] \arrow[d, hook] & {\mathscr{DG}_\Gamma} \arrow[d, hook] \arrow[dl]\\
            \mathcal{D}_{k*G}   & U(\mathscr{DG}_\Gamma) \nospacepunct{.}  
        \end{tikzcd}
    \]
    Let $\rk_{U(G)}$ and $\rk_G$ denote the Sylvester matrix rank functions on $\mathscr{DG}_\Gamma$ corresponding to $U(\mathscr{DG}_\Gamma)$ and $\mathcal{D}_{k*G}$, respectively. Then $\rk_{U(G)}\geq \rk_{G}$ by universality. 
    
    On the other hand, $\mathcal{D}_{kG}$ is Hughes-free and hence $\rk_{G}$ is a maximal rank function on $kG$. Thus, $\rk_{U(G)} = \rk_{G}$ on $kG$ by \cref{prop:rk_max_HF}. Since $kG$ embeds in both division rings epically, they are $kG$-isomorphic by \cref{lem:equal_rk}. \qedhere
\end{proof}

\begin{lem}\label{lem:HF_cyclic}
    Let $G$ be a locally indicable group and $N\trianglelefteqslant G$ be a normal subgroup such that $G/N=\langle tN \rangle \cong \Z$. If $\mathcal{D}_{k*G}$ is a Hughes-free $k*G$-division ring of fractions, then 
    $$\mathcal{D}_{k*G}\cong \Ore(\mathcal{D}_{k*N}[t^{\pm 1};\tau])$$
    as $k*G$-rings.
\end{lem}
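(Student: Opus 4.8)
The plan is to realise $\mathcal{D}_{kG}$ directly as the Ore localisation of the skew Laurent ring $R := \mathcal{D}_{kN}[t^{\pm 1};\tau]$, invoking the universal property of Ore localisation rather than a rank-function comparison. First I would fix the twisting automorphism: since $N \leqslant G$ is locally indicable and $\mathcal{D}_{kG}$ is Hughes-free, the division closure of $kN$ in $\mathcal{D}_{kG}$ is the Hughes-free division ring $\mathcal{D}_{kN}$. Conjugation by $t$ is a ring automorphism $c_t$ of $\mathcal{D}_{kG}$ carrying $kN$ onto $t\,kN\,t^{-1} = kN$ (as $N \trianglelefteqslant G$), hence carrying the division closure $\mathcal{D}_{kN}$ onto itself; I set $\tau := c_t|_{\mathcal{D}_{kN}}$, which is the same automorphism produced by \cref{lem:twisted_ext}. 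This lets me form $R = \mathcal{D}_{kN} * \langle tN \rangle = \mathcal{D}_{kN}[t^{\pm 1};\tau]$.

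Next I would record two structural facts. On one hand, $R$ is a skew Laurent polynomial ring over a division ring, hence a two-sided principal ideal domain, hence Noetherian, hence a two-sided Ore domain; in particular $\Ore(R)$ is a division ring. On the other hand, the inclusion $\mathcal{D}_{kN} \hookrightarrow \mathcal{D}_{kG}$ together with $t \mapsto t$ defines a ring homomorphism $\iota \colon R \to \mathcal{D}_{kG}$, because $t\,d\,t^{-1} = \tau(d)$ holds in $\mathcal{D}_{kG}$ for every $d \in \mathcal{D}_{kN}$ by the very construction of $\tau$.

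The crucial step is the injectivity of $\iota$, i.e.\ that the powers $\{t^i : i \in \Z\}$ are linearly independent over $\mathcal{D}_{kN}$ inside $\mathcal{D}_{kG}$. This is exactly where Hughes-freeness enters, and I expect it to be the only substantive point: $\{t^i\}$ is a right transversal of $N$ in $G$, so the Linnell property of the Hughes-free division ring $\mathcal{D}_{kG}$ (\cref{rem:linnell}) yields precisely this independence. Everything else is formal. Granting injectivity, I note that $\mathcal{D}_{kG}$ is generated as a division ring by $kG \subseteq \iota(R)$, so it is an epic $R$-division ring into which $R$ embeds. By the universal property of Ore localisation, $\iota$ extends to a homomorphism $\Ore(R) \to \mathcal{D}_{kG}$; being a nonzero homomorphism out of a division ring it is injective, and its image is a division subring containing $\iota(R)$, hence all of $\mathcal{D}_{kG}$ by epicness. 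Therefore $\Ore(R) \cong \mathcal{D}_{kG}$ as $R$-rings, and hence as $kG$-rings, which is the claim.

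As an alternative I could mirror the proof of \cref{lem:HF_amalg}: $R$ is a fir, so it has a universal division ring of fractions $U(R)$, and for the Ore domain $R$ one has $U(R) \cong \Ore(R)$. The inequality $\rk_{U(R)} \geqslant \rk_{\mathcal{D}_{kG}}$ on $kG$ follows from universality of $U(R)$ (after observing $\mathcal{D}_{kG}$ is an epic $R$-division ring), while maximality of $\rk_{\mathcal{D}_{kG}}$ from \cref{prop:rk_max_HF} gives the reverse inequality; \cref{lem:equal_rk} then forces $U(R) \cong \mathcal{D}_{kG}$. I prefer the first route, as it avoids rank functions and makes the role of the Linnell property transparent.
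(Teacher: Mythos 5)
Your first route is essentially the paper's own proof: both arguments use Hughes-freeness (via the linear independence of the powers of $t$ over $\mathcal{D}_{kN}$) to identify the subring of $\mathcal{D}_{kG}$ generated by $\mathcal{D}_{kN}$ and $t^{\pm 1}$ with the skew Laurent ring, observe it is an Ore domain, and then conclude by the universal property of Ore localisation together with epicness of both embeddings. Your explicit appeal to the Linnell property to justify the independence when $G$ is not finitely generated is a slightly more careful articulation of a step the paper leaves implicit, and the rank-function alternative you sketch is a valid but unnecessary detour.
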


\begin{proof}
    First note that since $\mathcal{D}_{k*G}$ is a Hughes-free division ring, the integer powers of $t$ are left $\mathcal{D}_{k*N}$-linearly independent. Hence, if we set $S$ for the subring of $\mathcal{D}_{k*G}$ generated by $\mathcal{D}_{k*N}$, $t$, and $t^{-1}$, we get an isomorphism $S\cong \mathcal{D}_N[t^{\pm 1};\tau]$ where $\tau$ denotes the twisted action of $t$ on $\mathcal D_{k*N}$ induced by conjugation. Note that $S$ is an Ore domain. Thus, by universal property of localisation, we have the following commutative diagram
    \[
        \begin{tikzcd}
             \Ore(\mathcal{D}_N[t^{\pm 1};\tau]) \arrow[rr, hook] &&  \mathcal{D}_{k*G}\\
            &kG \arrow[ul, hook'] \arrow[ur, hook]   &    
        \end{tikzcd}
    \]
    Finally, since both embeddings are epic, we conclude that $\mathcal{D}_{k*G}$ and $\Ore(\mathcal{D}_N[t^{\pm 1};\tau])$ are $k*G$-isomorphic.\qedhere
\end{proof}

We now state the main result of this section.

\begin{thm}\label{thm:LL_HF}
    Let $G$ be a torsion-free one-relator group and let $k$ be a division ring such that $kG$ embeds into a Hughes-free division ring $\mathcal D_{kG}$. Then $\overline{kG} \cong \mathcal D_{kG}$ as $kG$-division rings.
\end{thm}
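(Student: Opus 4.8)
The plan is to avoid re-running the Lewin--Lewin construction and instead to identify $\overline{kG}$ with $\mathcal D_{kG}$ at the level of Sylvester rank functions, exploiting the fact---unavailable to Lewin and Lewin---that $\mathcal D_{kG}$ is now known to exist and to be Hughes-free, hence rank-maximal. Recall that Lewin and Lewin realise $\overline{kG}$ as the universal division ring of fractions $U(R)$ of a semifir $R$ with $kG \subseteq R \subseteq \overline{kG}$, built along the Magnus--Moldavanskii hierarchy of $G$; concretely $R$ is a tree of rings in the sense of \cref{sec:GraphsRings}, whose leaves are group algebras of free (Magnus) subgroups and whose edge rings are group algebras of free groups as well, assembled by iterated coproducts and HNN extensions over sub-division rings. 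The two facts I will use about this presentation are that $\overline{kG} = U(R)$ is universal among $R$-division rings and that $R$ is a semifir, so $U(R)$ exists by \cref{thm:semifirDivRing}.

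The clean finish is the following. Suppose I have produced a $kG$-compatible ring homomorphism $R \rightarrow \mathcal D_{kG}$; since $\mathcal D_{kG}$ is a division ring generated by the image of $kG \subseteq R$, it is an epic $R$-division ring. Universality of $\overline{kG} = U(R)$ then gives $\rk_{\overline{kG}} \geqslant \rk_{\mathcal D_{kG}}$ as rank functions on $R$, and in particular on every matrix over $kG$. On the other hand $\mathcal D_{kG}$ is Hughes-free, so $\rk_{\mathcal D_{kG}}$ is maximal in $\mathbb{P}_{div}(kG)$ by \cref{prop:rk_max_HF}; as the restriction of $\rk_{\overline{kG}}$ is an element of $\mathbb{P}_{div}(kG)$ dominating $\rk_{\mathcal D_{kG}}$, maximality forces $\rk_{\overline{kG}} = \rk_{\mathcal D_{kG}}$ on $kG$. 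Both $\overline{kG}$ and $\mathcal D_{kG}$ are epic $kG$-division rings, so \cref{lem:equal_rk} yields the desired $kG$-isomorphism $\overline{kG} \cong \mathcal D_{kG}$.

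It remains to construct the map $R \rightarrow \mathcal D_{kG}$, which I would do by induction along the hierarchy, verifying simultaneously that the Lewin--Lewin vertex and edge division rings coincide with the corresponding division closures inside $\mathcal D_{kG}$. For a free leaf $F$ one has $\overline{kF} = U(kF) = \mathcal D_{kF}$, since $kF$ is a fir and the Hughes-free $\mathcal D_{kF}$ is rank-maximal, hence universal. At an amalgam node $G = G_1 *_H G_2$, the division closures of $kG_1$, $kH$, and $kG_2$ inside $\mathcal D_{kG}$ are their Hughes-free division rings by uniqueness of Hughes-free embeddings for subgroups, and the coproduct $\mathcal D_{kG_1} *_{\mathcal D_{kH}} \mathcal D_{kG_2}$ maps into $\mathcal D_{kG}$ exactly as in \cref{prop:kGinjects}; indeed \cref{lem:HF_amalg} even identifies $U$ of this coproduct with $\mathcal D_{kG}$. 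At an HNN node I would pass to the splitting $G \cong N \rtimes \Z$ coming from a zero-exponent-sum generator and invoke \cref{lem:HF_cyclic} to write $\mathcal D_{kG} \cong \Ore(\mathcal D_{kN}[t^{\pm 1}; \tau])$, matching the skew-Laurent step of the Lewin--Lewin construction; \cref{lem:univ_equality} lets me freely interchange coproducts of group algebras with coproducts of their division rings while forming $U$, so the inductive description of $R$ passes through.

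The main obstacle I expect is the HNN/ascending step. When $G \cong N \rtimes \Z$, the kernel $N$ is typically infinitely generated---a directed union of an infinite amalgam of simpler one-relator or free groups---so to conclude $\overline{kN} \cong \mathcal D_{kN}$ I must run the graph-of-rings machinery of \cref{sec:GraphsRings} together with a colimit argument, as in the infinite case of \cref{thm:graphOfHF}, and check that Lewin and Lewin's construction is compatible with this directed union. A secondary technical point is the bookkeeping needed to identify the semifir $R$ with the tree of rings precisely: one must match the Magnus subgroups arising in the hierarchy with the free edge groups and invoke the Freiheitssatz to guarantee that the relevant subgroups are free, so that the edge division rings are genuinely division closures of free-group algebras sitting inside $\mathcal D_{kG}$.
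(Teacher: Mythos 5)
Your proposal is correct and follows essentially the same route as the paper: an induction along the Magnus hierarchy that identifies each Lewin--Lewin intermediate ring with the corresponding Hughes-free division ring via universality, rank-maximality of Hughes-free embeddings (\cref{prop:rk_max_HF}), and \cref{lem:equal_rk}, with \cref{lem:HF_amalg} and \cref{lem:HF_cyclic} handling the amalgam and skew-Laurent steps exactly as you describe. The only detail you leave implicit is the reduction when no generator of the relator has exponent sum zero (one passes to $G * \Z$, where the complexity drops, and recovers $\overline{kG}$ as a division closure), and the paper resolves your anticipated obstacle at the HNN step precisely as you suggest, by writing $\mathcal D_{kN}$ as a direct limit of Hughes-free division rings of finite sub-amalgams and matching it with the universal division ring of the infinite line of division rings.
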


\begin{const}[Lewin--Lewin]\label{const:LL}
Before proving \cref{thm:LL_HF}, we recall the Lewin--Lewin construction of the division ring $\overline{kG}$ (see \cite{LewinLewinORTF} for the full details). Let $G=\langle a_1, a_2,\ldots \mid R \rangle$ be a torsion-free one-relator group, where $R$ is a cyclically reduced word in the generators $a_i$. The construction of $\overline{kG}$ is done by induction on the complexity of $R$, where the \textit{complexity} of $R$ is defined to be the length of $R$ minus the number of generators appearing in $R$. Specifically, they proved that there is an embedding of $kG$ into a division ring $\overline{kG}$ such that any Magnus subgroup satisfies the Linnell condition and the division closure of its group algebra is universal.

If $R$ has complexity $0$, then $G$ is free, and $\overline{kG}$ is just the universal division ring of fractions $U(kG)$ (cf.~\cite{Lewinfree}).

Now assume the complexity is greater than zero. We also assume that every generator appears in $R$. Otherwise $G$ decomposes as a free product $H * F$, where $H = \langle a_1, \dots, a_n \mid R \rangle$ with $R$ involving all the generators $a_1, \dots, a_n$ and $F$ is a free group; then $\overline{kG}$ is defined to be $U(\overline{kH} *_k U(kF))$. 

Assume for now that $R$ involves a generator of exponent sum zero; we explain how Lewin--Lewin reduce to this case at the end. Without loss of generality, suppose that $t = a_1$ has exponent sum zero, and let $N$ be the normal subgroup generated by the elements $a_i$ for $i \geqslant 2$. Note that $N$ splits as a line of groups
\[
    \cdots * N_{i-1} \underset{A_{i-1,i}}{*} N_i \underset{A_{i,i+1}}{*} N_{i+1} * \cdots 
\]
where each $N_i$ is a one-relator group with relator of complexity strictly less than that of $R$ and each $A_{i-1,i}$ is a Magnus subgroup (see \cite[Section 5]{LewinLewinORTF}). By induction, each ring $kN_i$ embeds into a division ring $\overline{kN_i}$ satisfying the Linnell condition on Magnus subgroups and containing the universal division ring of fractions of $kA_{i-1,i}$ and $kA_{i,i+1}$, respectively. By uniqueness of the universal division ring of fractions, Lewin--Lewin conclude that the line of groups is $\mathcal D$-compatible (to use the terminology of \cref{sec:GraphsRings}). Hence, we can form the following line of division rings
\[
    C = \cdots * \overline{kN_{i-1}} \underset{\overline{kA_{i-1,i}}}{*} \overline{kN_i} \underset{\overline{kA_{i,i+1}}}{*} \overline{kN_{i+1}} * \cdots .
\]
which is a semifir by Cohn's theorem, and thanks to the Linnell condition it contains the group ring
\[
    kN \cong \cdots * kN_{i-1} \underset{kA_{i-1,i}}{*} kN_i \underset{kA_{i,i+1}}{*} kN_{i+1} * \cdots 
\]
Thus $\overline{kN}$ is defined as the universal division ring of fractions $U(C)$. Finally, conjugation by $t$ induces an automorphism on $\overline{kN}$ according to \cite[Theorem 2]{LewinLewinORTF}, and $\overline{kG}$ is the Ore localisation of $\overline{kN}[t^{\pm 1};\tau]$. This concludes the construction whenever $R$ involves a generator with exponent sum zero.

Suppose now that $R$ involves no generator with exponent sum zero. If this is the case, then by \cite[Proposition 2]{LewinLewinORTF} the one relator group $H := G * \Z$ can be given a one-relator presentation where the cyclically reduced word has complexity strictly less than that of $R$. Lewin--Lewin set $\overline{kG}$ to be the division closure of $kG$ inside $\overline{kH}$, which exists by induction.
\end{const}

We point out that the core of Lewin--Lewin's work consists in showing that $\overline{kG}$ enjoys the desired properties on Magnus subgroups, namely that the Magnus subgroups of $kN_i$ satisfy the Linnell condition in $\overline{kN_i}$ and $\overline{kN_i}$ contains the universal division ring of fractions of the Magnus subgroups of $N_i$ for all $i$. This is what allows them to pass the argument up through the induction. Moreover, it is worthwhile to record that this proof was prior to the discovery of the fact that torsion-free one-relator groups are locally indicable due to Brodski\u{\i} in \cite{BrodskiiOR}.

\begin{proof}[{Proof of \cref{thm:LL_HF}}]
    The plan of the proof is to follow \cref{const:LL} and use our assumption that $\mathcal D_{kG}$ exists to prove that $\overline{kG} \cong \mathcal D_{kG}$. Crucially, we recall Brodski\u{\i}'s result that torsion-free one-relator groups are locally indicable \cite{BrodskiiOR}. Let $G=\langle a_1, a_2,\ldots \mid R \rangle$ be a torsion-free one-relator group, where $R$ is a cyclically reduced word in the generators $a_i$. 
    
    We argue, as in \cite{LewinLewinORTF}, by induction on the complexity of $R$. If $R$ has complexity $0$, then $G$ is free and $U(kG) \cong \mathcal D_{kG}$ (see, e.g., \cite[Theorem 1.1]{JaikinZapirain2020THEUO}).
    
    Now assume the complexity is greater than zero. We also assume that every generator appears in $R$. Otherwise $G$ decomposes as a free product $H * F$, where $H = \langle a_1, \dots, a_n \mid R \rangle$ with $R$ involving all the generators $a_1, \dots, a_n$ and $F$ is a free group. In this case, the Lewin--Lewin construction is $\overline{kG} = U(\overline{kH} *_k U(kF))$. So assuming the result for $H$, we can conclude that $\overline{kG} \cong \mathcal D_{kG}$ by \cref{lem:HF_amalg}. 
    
    We also assume that $R$ involves a generator of exponent sum zero. Otherwise, $\overline{kG}$ is the division closure of $kG$ inside $\overline{kH}$  (where $H = G * \Z$ as above). By induction on the complexity, $\overline{kH} \cong \mathcal D_{kH}$, and hence $\overline{kG} \cong \mathcal D_{kG}$. Thus, without loss of generality, suppose that $t = a_1$ has exponent sum zero, and let $N$ be the normal subgroup generated by the elements $a_i$ for $i \geqslant 2$. Then $N$ splits as a line of groups
    \[
        \cdots * N_{i-1} \underset{A_{i-1,i}}{*} N_i \underset{A_{i,i+1}}{*} N_{i+1} * \cdots 
    \]
    where each $N_i$ is a one-relator group with relator of complexity strictly less than that of $R$ and each $A_{i-1,i}$ is a free group. By induction, each $\overline{kN_i} \cong \mathcal D_{kN_i}$, so
    \[
        C \cong \cdots * \mathcal D_{kN_{i-1}} \underset{\mathcal D_{kA_{i-1,i}}}{*} \mathcal D_{kN_i} \underset{\mathcal D_{kA_{i,i+1}}}{*} \mathcal D_{kN_{i+1}} * \cdots.
    \]
    Note that, by uniqueness of Hughes-free division rings, we can also construct a natural embedding of $kN$ into the division ring $\mathcal D$ given by the direct limit of the system 
    \[
        \{ U(\mathcal{D}_{kN_{-i}} *_{\mathcal{D}_{kA_{-i,-i+1}}} \cdots *_{\mathcal{D}_{kA_{i-1,i}}} \mathcal{D}_{kN_{i}})\}_{i\in \N}
    \]
    of Hughes-free division rings by \cref{lem:HF_amalg}. Note $\mathcal D$ is Hughes-free as a $kN$-division ring and therefore $\mathcal D = \mathcal D_{kN}$ coincides with the division closure of $kN$ in $\mathcal D_{kG}$. Moreover, $U(C) \cong \mathcal D_{kN}$ by \cref{lem:HF_amalg} and thus $\overline{kN} \cong \mathcal{D}_{kN}$.
    
    Finally, $\overline{kG}$ is the Ore localisation of $\overline{kN}[t^{\pm 1};\tau]$, which according to Lemma \ref{lem:HF_cyclic} is $kG$-isomorphic to the Hughes-free $kG$-division ring of fractions $\mathcal{D}_{kG}$. This concludes the proof of the case where $R$ involves a generator with exponent sum zero, and hence the theorem. \qedhere 
\end{proof}

\begin{rem}
    The above proof shows that the Lewin--Lewin construction can be further extended to crossed products $k * G$.
\end{rem}

\begin{cor}\label{cor:LLHF}
    Let $G$ be a torsion-free one-relator group and let $k$ be a field of characteristic $0$. Then $\overline{kG}$ is Hughes-free. If $k$ is a general division ring, then $\overline{kG}$ is Hughes-free when $G$ is virtually compact special.
\end{cor}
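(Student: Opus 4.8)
The plan is to obtain both assertions as immediate consequences of \cref{thm:LL_HF}: that theorem identifies $\overline{kG}$ with a Hughes-free division ring $\mathcal D_{kG}$ precisely when the latter exists, so in each case I only need to establish the existence of a Hughes-free $kG$-division ring and then quote the theorem. The common preliminary step is to recall Brodski\u{\i}'s theorem \cite{BrodskiiOR} that every torsion-free one-relator group is locally indicable, which is what makes the notion of Hughes-freeness available for $G$ in the first place.

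For the characteristic zero statement, I would argue as follows. Since $G$ is locally indicable and $k$ has characteristic zero, a Hughes-free division ring $\mathcal D_{kG}$ exists by \cite[Corollary 1.4]{JaikinLopez_Atiyah}, which follows from the strong Atiyah conjecture over $\C$ for locally indicable groups. Feeding this into \cref{thm:LL_HF} gives $\overline{kG} \cong \mathcal D_{kG}$ as $kG$-division rings, so $\overline{kG}$ is Hughes-free.

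For the second statement, with $k$ an arbitrary division ring and $G$ virtually compact special, I would instead invoke the final clause of \cref{cor:VCSinField}: since $G$ is a torsion-free one-relator group it is locally indicable, and that corollary then supplies a Hughes-free embedding $kG \hookrightarrow \mathcal D_{kG}$. Applying \cref{thm:LL_HF} once more yields $\overline{kG} \cong \mathcal D_{kG}$, whence $\overline{kG}$ is Hughes-free. Since the argument merely combines \cref{thm:LL_HF} with existence results already at our disposal, there is no real obstacle here beyond verifying, in each regime, that the relevant Hughes-free division ring exists; all the genuine work has already been carried out in proving \cref{thm:LL_HF} and \cref{cor:VCSinField}.
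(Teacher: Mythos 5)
Your proposal is correct and follows exactly the route the paper takes: in characteristic zero the existence of $\mathcal D_{kG}$ comes from the Atiyah conjecture for locally indicable groups \cite[Corollary 1.4]{JaikinLopez_Atiyah}, in the virtually compact special case from \cref{cor:VCSinField} (using Brodski\u{\i}'s local indicability of torsion-free one-relator groups), and both cases then reduce to \cref{thm:LL_HF}. No issues.
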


\begin{proof}
    Recall that torsion-free one-relator groups are locally indicable \cite{BrodskiiOR}. Now, for locally indicable groups over a characteristic $0$ field there always exists $\mathcal{D}_{kG}$ according \cite[Corollary 1.4]{JaikinLopez_Atiyah}. Therefore, $\overline{kG}$ is Hughes-free by \cref{thm:LL_HF}.

    Similarly, from \cref{cor:VCSinField} follows the existence of $\mathcal{D}_{kG}$ in the virtually compact special case. Another use of \cref{thm:LL_HF} ends the proof.
\end{proof}

In a sense made precise below, most one-relator groups are virtually compact special. The following definition is due to Puder \cite{Puder_PR}.

\begin{defn}\label{def:PR}
    The \textit{primitivity rank} of a word $w$ in a free group $F$ is the minimal rank of a subgroup $K$ containing $w$ such that $w$ is imprimitive in $K$ (we define the primitivity rank to be $\infty$ if there is no such $K$).
\end{defn}

Note that words of primitivity rank $1$ are exactly the proper powers. Let $G$ be a one-relator group with presentation $G = F/\llangle R \rrangle$. Louder and Wilton proved that $G$ has negative immersions if and only if $R$ has primitivity rank at least $3$ \cite[Theorem 1.3]{LouderWilton_NegativeImmersions} and Linton showed that one-relator groups with negative immersions are virtually compact special \cite[Theorem 8.2]{Linton_ORH}. Thus, $\mathcal D_{kG}$ exists by \cref{cor:VCSinField} and \cref{lem:LinnellFIovergroup}, and by \cref{thm:LL_HF} $\overline{kG} \cong \mathcal{D}_{kG}$ whenever $R$ is of primitivity rank at least $3$. An advantage of the characterisation in terms of primitivity rank is that, given a one-relator group $G$, there is a relatively simple algorithm to check whether $G$ has negative immersions and therefore is virtually compact special \cite[Corollary 4.4]{Puder_PR}.

We conclude with the following natural question. By the previous remarks, it is settled in the affirmative in characteristic $0$, and only the primitivity rank $2$ case remains in characteristic $p > 0$.

\begin{q}
    Is the Lewin--Lewin construction always Hughes-free?
\end{q}

\appendix \section{The Kaplansky Zero Divisor Conjecture for non-orientable \texorpdfstring{$3$}{}-manifold groups} \label{sec:appendix}

To prove Kaplansky's Zero Divisor Conjecture for group algebras of torsion-free orientable $3$-manifold groups, we made heavy use of the Prime and JSJ Decomposition Theorems, which are usually stated for orientable $3$-manifolds. These theorems have been extended to non-orientable $3$-manifolds by Epstein \cite{Epstein_nonoriprime} and Bonahon--Siebenmann \cite{BonahonSiebenmann_nonoriJSJ}, respectively, and we explain here how to apply them to prove that the group algebra of any torsion-free $3$-manifold group satisfies Kaplansky's Zero Divisor Conjecture.

We will be using the statements of the non-orientable decomposition theorems as found in Bonahon's survey \cite[Theorems 3.1, 3.2, and 3.4]{Bonahon_3mfldsSurvey}, so we take some time to ensure that our definitions are in line with his. The non-orientable Prime Decomposition Theorem tells us how to cut a $3$-manifold along essential spheres and projective planes, while the non-orientable JSJ Decomposition tells us how to cut a $3$-manifold containing no essential spheres or projective planes along essential $2$-tori and Klein bottles. An \textit{essential sphere} in a $3$-manifold $M$ is an embedded copy of $S^2$ such that neither component of $M \smallsetminus S^2$ is homeomorphic to a $3$-ball. An \textit{essential projective plane} in a $3$-manifold $M$ is an embedded $2$-sided copy of $\R P^2$. We say that a $3$-manifold $M$ is \textit{irreducible} if does not contain any essential spheres or projective planes. Since $G = \pi_1(M)$ is always assumed to be torsion-free, we will not have to worry about the projective planes as the next lemma shows. We include a short proof for the reader's convenience.

\begin{lem}\label{lem:noProj}
    Let $M$ be a $3$-manifold and let $\R P^2 \hookrightarrow M$ be an embedding. Then the induced homomorphism $\pi_1(\R P^2) \rightarrow \pi_1(M)$ is injective.
\end{lem}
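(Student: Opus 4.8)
The plan is to argue by contradiction. Since $\pi_1(\R P^2) \cong \Z/2$, the induced map $i_* \colon \pi_1(\R P^2) \to \pi_1(M)$ is either injective or trivial, so I would assume it is trivial and aim for a contradiction. The key first move is to pass to the universal cover $p \colon \widetilde M \to M$: because $i_*$ is trivial, the lifting criterion guarantees that the inclusion $i \colon \R P^2 \hookrightarrow M$ lifts to a map $\tilde\imath \colon \R P^2 \to \widetilde M$ with $p \circ \tilde\imath = i$. As $p \circ \tilde\imath$ is injective and $\R P^2$ is compact while $\widetilde M$ is Hausdorff, $\tilde\imath$ is again an embedding. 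This reduces the problem to producing a contradiction from an embedded $\R P^2$ in the simply connected $3$-manifold $\widetilde M$.

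The next step is to exploit orientability of the cover, and this is also what lets me treat one- and two-sided embeddings in $M$ uniformly: regardless of the sidedness of the original $\R P^2$ in $M$, its lift in $\widetilde M$ is forced to be one-sided. Indeed, a simply connected manifold is orientable (its obstruction $w_1$ lies in $H^1(\widetilde M;\Z/2) = \operatorname{Hom}(\pi_1\widetilde M, \Z/2) = 0$), and for an embedded closed surface $S$ in an orientable $3$-manifold the Whitney sum decomposition $T\widetilde M|_S = TS \oplus \nu$ together with $w_1(T\widetilde M) = 0$ gives $w_1(\nu) = w_1(TS)$. Since $\R P^2$ is non-orientable, $w_1(\nu) \neq 0$, so $\tilde\imath$ is one-sided and a regular neighborhood $N$ of $\tilde\imath(\R P^2)$ is the twisted $I$-bundle over $\R P^2$. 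The boundary $\partial N$ is the double cover of $\R P^2$ classified by $w_1(\nu) = w_1(T\R P^2)$, which is the orientation double cover, namely $S^2$; moreover $N$ deformation retracts onto $\R P^2$, so $\pi_1(N) \cong \Z/2$.

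Finally I would compute $\pi_1(\widetilde M)$ by van Kampen's theorem. Writing $\widetilde M = N \cup_{S^2} W$ with $W = \overline{\widetilde M \smallsetminus N}$, the gluing collar is homotopy equivalent to the simply connected sphere $S^2 = \partial N$, so amalgamation over the trivial group yields $\pi_1(\widetilde M) \cong \pi_1(N) * \pi_1(W) \cong \Z/2 * \pi_1(W)$ (and $\pi_1(\widetilde M) \cong \Z/2$ outright in the degenerate case $W = \emptyset$). In either case $\pi_1(\widetilde M)$ retracts onto $\Z/2$ and is therefore nontrivial, contradicting the fact that $\widetilde M$ is the universal cover. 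Hence $i_*$ cannot be trivial, and it is injective.

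The lift, the Stiefel–Whitney computation, and the van Kampen step are all routine. The one point that genuinely deserves care — and which I expect to be the crux — is the identification $\partial N \cong S^2$, i.e.\ that the boundary of the twisted $I$-bundle over $\R P^2$ is its orientation double cover. This is exactly where the non-orientability of $\R P^2$ enters, and it is the feature that forces the contradiction; for an orientable surface the analogous neighborhood would be two-sided with a disconnected boundary, and no contradiction would arise.
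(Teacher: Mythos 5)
Your proof is correct, but it takes a genuinely different route from the paper's. The paper lifts the entire preimage of the embedded $\R P^2$ to the universal cover $\widetilde M$ and observes a dichotomy: if some component of the preimage is a sphere, the generator of $\pi_1(\R P^2)$ visibly survives; if some component is an $\R P^2$ (which is precisely your case ``$i_*$ trivial''), the paper traps it in a compact simply connected submanifold via the Scott Core Theorem, fills the spherical boundary, and invokes the Poincar\'e Conjecture to reduce to the classical fact that $\R P^2$ does not embed in $S^3$. You instead prove directly that a simply connected $3$-manifold contains no embedded $\R P^2$: orientability of $\widetilde M$ forces the lifted projective plane to be one-sided, its regular neighborhood is the twisted $I$-bundle with boundary the orientation double cover $S^2$, and van Kampen over that sphere exhibits a $\Z/2$ free factor in $\pi_1(\widetilde M)$, a contradiction. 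This is in effect the standard proof of $\R P^2 \not\hookrightarrow S^3$ inlined and run in $\widetilde M$ itself; it buys you independence from the Scott Core Theorem and the Poincar\'e Conjecture, at the cost of a somewhat longer write-up, whereas the paper's version is shorter on the page because it black-boxes that classical fact. Two minor points to keep in mind: the lift $\tilde\imath$ inherits tameness (needed for the regular neighborhood theorem) from $i$ because $p$ is a local homeomorphism; and if $i(\R P^2)$ were a boundary component of $M$ the normal-bundle step degenerates, but then the contradiction is immediate, since an orientable manifold cannot have a non-orientable boundary component.
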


\begin{proof}
    The embedded $\R P^2$ lifts to a disjoint collection $\Sigma$ of $2$-spheres and projective planes in the universal cover $\widetilde M$. If $\Sigma$ contains a copy of $S^2$, then we are done, because then a loop in $M$ representing the generator of $\R P^2$ has a lift to a non-closed path in $\widetilde M$. If $\Sigma$ contains a copy of $\R P^2$, then the Scott Core Theorem implies that there is a compact, simply connected $3$-manifold containing a copy of $\R P^2$. Simply-connected manifolds are orientable, and therefore so are their boundaries. Thus we can fill the boundary to conclude that there is an embedding of $\R P^2$ into a simply-connected, closed $3$-manifold. By the Poincar\'e Conjecture, we have thus produced an embedding $\R P^2 \hookrightarrow S^3$, a contradiction. \qedhere
\end{proof}

An \textit{essential torus} in a $3$-manifold $M$ is an embedded $\pi_1$-injective copy of $T^2$. Bonahon defines an \textit{essential Klein bottle} to be an embedded copy of the Klein bottle $K$ such that the composition $T^2 \rightarrow K \hookrightarrow M$ is $\pi_1$-injective, where $T^2 \rightarrow K$ is the orientation double cover. We will use the following as the definition of an essential Klein bottle.

\begin{lem}
    An embedded Klein bottle $\iota \colon K \hookrightarrow M$ is essential if and only if $\iota$ is $\pi_1$-injective.
\end{lem}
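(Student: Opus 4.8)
The plan is to reduce the equivalence to a purely group-theoretic statement about the Klein bottle group, so I would begin by unwinding both definitions at the level of fundamental groups. Writing $p \colon T^2 \to K$ for the orientation double cover, covering space theory identifies $p_*$ with the inclusion of an index-$2$ (hence normal) subgroup $\im(p_*) \cong \Z^2$ of $\pi_1(K)$. By Bonahon's definition, $\iota$ is essential exactly when $\iota_* \circ p_*$ is injective, i.e.\ when $\iota_*$ restricts to an injection on $\im(p_*)$, whereas $\iota$ is $\pi_1$-injective exactly when $\iota_*$ is injective on all of $\pi_1(K)$. The forward implication is then immediate, since restricting an injective map to a subgroup keeps it injective.

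For the reverse implication I would argue as follows. Assuming $\iota$ is essential gives $\Ker(\iota_*) \cap \im(p_*) = \{1\}$, so the composite $\Ker(\iota_*) \hookrightarrow \pi_1(K) \twoheadrightarrow \pi_1(K)/\im(p_*) \cong \Z/2$ is injective, and hence $\Ker(\iota_*)$ has order at most $2$. The final step is to invoke torsion-freeness of $\pi_1(K)$: since the Klein bottle is aspherical with universal cover $\R^2$, its fundamental group $\Z \rtimes \Z$ has no nontrivial finite subgroup, and therefore the order-at-most-$2$ subgroup $\Ker(\iota_*)$ must be trivial, giving injectivity of $\iota_*$.

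The step I expect to be the real crux is the passage from injectivity on the index-$2$ subgroup to injectivity on the whole group, which fails for general finite-index inclusions; it works here solely because torsion-freeness of $\pi_1(K)$ rules out the a priori possible order-$2$ kernel. Everything else is routine covering-space theory together with the elementary fact that index-$2$ subgroups are normal.
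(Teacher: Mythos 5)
Your proof is correct, and it takes a slightly different route from the paper at the one non-trivial step. Both arguments reduce the reverse implication to the observation that $\Ker(\iota_*)$ meets the index-$2$ subgroup $\im(p_*) \cong \Z^2$ trivially, but they finish differently: you observe that $\Ker(\iota_*)$ therefore embeds into $\pi_1(K)/\im(p_*) \cong \Z/2$ and then kill the possible order-$2$ kernel using torsion-freeness of the Klein bottle group $\Z \rtimes \Z$; the paper instead splits into the two cases ``$\iota_*\pi_1(K)$ contains $\Z^2$ with index $2$'' versus ``$\iota_*\pi_1(K) \cong \Z^2$'' and rules out the latter because $H_1(K) \cong \Z \oplus \Z/2$ cannot surject onto $\Z^2$. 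The two exclusions are logically interchangeable here (a nontrivial kernel forces exactly the bad case in either formulation), so the difference is one of taste: your version leans on torsion-freeness of $\pi_1(K)$ (immediate from asphericity of $K$, or from the explicit presentation), while the paper's leans on the abelianization computation and needs no input about torsion. Your identification of the crux --- that injectivity on a finite-index subgroup does not in general promote to injectivity on the whole group, and that normality of the kernel plus torsion-freeness is what saves the day --- is exactly right.
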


\begin{proof}
    If $\iota$ is $\pi_1$-injective, then clearly $K$ is essential in $M$. Conversely, since $T^2 \rightarrow K \hookrightarrow M$ is $\pi_1$-injective, either $\iota_* \pi_1(K)$ contains $\Z^2$ as an index $2$ subgroup in which case $\iota$ is $\pi_1$-injective, or $\iota_* \pi_1(K) \cong \Z^2$. But $H_1(K) \cong \Z \oplus \Z/2$, which rules out the second case. \qedhere
\end{proof}

The following lemma is well-known; we include a proof because we have not seen it appear without the assumption of orientability. In the orientable case, the lemma essentially follows from \cite[Theorem 6.1]{Howie_locIndGps}.

\begin{lem}\label{lem:locInd3mfld}
    Let $M$ be a compact, irreducible $3$-manifold whose boundary $\partial M$ contains a component of Euler characteristic $\leqslant 0$. Then $\pi_1(M)$ is locally indicable.
\end{lem}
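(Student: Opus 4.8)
The plan is to verify local indicability directly from the definition: I will show that every nontrivial finitely generated subgroup $H \leqslant \pi_1(M)$ admits a surjection onto $\Z$, equivalently that $b_1(H) := \dim_\Q H_1(H;\Q) > 0$. (This condition also forces $\pi_1(M)$ to be torsion-free, since a nontrivial finite cyclic subgroup cannot surject onto $\Z$, so I need not verify torsion-freeness separately.) As a preliminary step, I would first record that $\pi_1(M)$ is infinite and that $M$ is aspherical. Infiniteness follows from the $\chi \leqslant 0$ hypothesis: if $\pi_1(M)$ were finite, the universal cover would be a compact, simply connected, irreducible $3$-manifold with boundary, hence a ball by the Poincar\'e Conjecture (exactly as in the proof of \cref{lem:noProj}), forcing every boundary component of $M$ to be $S^2$ or $\R P^2$ and contradicting the existence of a component of Euler characteristic $\leqslant 0$. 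Irreducibility together with the Sphere Theorem gives $\pi_2(M) = 0$, and combined with infinite $\pi_1$ this yields that $M$ is aspherical, so $\pi_1(M)$ is torsion-free.

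Next, fix a nontrivial finitely generated $H \leqslant \pi_1(M)$ and let $\widehat M \to M$ be the corresponding cover, so $\pi_1(\widehat M) \cong H$. Since higher homotopy groups are preserved by covers, $\pi_2(\widehat M) = \pi_2(M) = 0$; moreover $\widehat M$ contains no embedded $\R P^2$, as any such would be $\pi_1$-injective by \cref{lem:noProj} and would produce $2$-torsion in the torsion-free group $\pi_1(M)$. Thus $\widehat M$ is $P^2$-irreducible. By the Scott Core Theorem I may choose a compact core $N \subseteq \widehat M$ with $\pi_1(N) \cong H$, and using irreducibility of $\widehat M$ I can absorb the balls bounded by the sphere components of $\partial N$ so as to arrange that $\partial N$ has no $S^2$ components; it automatically has no $\R P^2$ components. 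Finally $\partial N \neq \emptyset$: otherwise $N$ would be a closed component of the connected manifold $\widehat M$, forcing $\widehat M = N$ to be a closed cover of $M$, which is impossible since $M$ has nonempty boundary. Consequently every component of $\partial N$ is a closed surface of Euler characteristic $\leqslant 0$, so $\chi(\partial N) \leqslant 0$.

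It remains to deduce $b_1(N) > 0$, and here I would avoid the usual ``half lives, half dies'' argument, whose standard form requires orientability, in favour of an orientation-independent Euler characteristic computation. Doubling $N$ along its boundary produces a closed $3$-manifold $DN$ with $\chi(DN) = 2\chi(N) - \chi(\partial N) = 0$, so $\chi(N) = \tfrac12 \chi(\partial N) \leqslant 0$. On the other hand, since $N$ is connected with nonempty boundary we have $b_0(N) = 1$ and $H_3(N;\Q) = 0$, whence $\chi(N) = 1 - b_1(N) + b_2(N)$. If $b_1(N)$ were zero this would give $\chi(N) = 1 + b_2(N) \geqslant 1 > 0$, contradicting $\chi(N) \leqslant 0$. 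Therefore $b_1(N) > 0$, and since $H_1(N;\Q) \cong H_1(H;\Q)$ depends only on $\pi_1(N) = H$, the group $H$ surjects onto $\Z$; this proves that $\pi_1(M)$ is locally indicable. The step requiring the most care is precisely the passage from the orientable setting of \cite[Theorem 6.1]{Howie_locIndGps} to the general case: the obstacle is that the core $N$ may be non-orientable, so the rational Poincar\'e--Lefschetz duality underlying half-lives-half-dies is unavailable, and it is the orientation-independent identity $\chi(N) = \tfrac12\chi(\partial N)$ together with the control of the boundary components obtained above that lets me bypass it cleanly.
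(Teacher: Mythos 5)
Your argument is correct, and it takes a genuinely different route from the paper at the key homological step. The overall architecture is the same in both proofs: reduce the statement for an arbitrary nontrivial finitely generated subgroup $H$ to a statement about a compact core $N$ of the corresponding cover, and then show $b_1(N)>0$ using the boundary. Where you diverge is in how positivity of $b_1$ is extracted. The paper runs the Half Lives--Half Dies Lemma on $H_1(\partial M;\Q)\to H_1(M;\Q)$, and, since that lemma rests on rational Poincar\'e--Lefschetz duality and hence on orientability, handles the non-orientable case by passing to the orientation double cover and chasing the (injective) transfer maps through a commuting square. You instead use the orientation-independent identity $\chi(N)=\tfrac12\chi(\partial N)$ obtained by doubling, together with $H_3(N;\Q)=0$ and $b_0(N)=1$, to force $b_1(N)\geqslant 1-\chi(N)+b_2(N)>0$ once all boundary components have $\chi\leqslant 0$. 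This buys you a single uniform argument with no case split on orientability and no transfer maps, at the cost of having to normalise the core first (capping off spherical boundary components using irreducibility of the cover, and excluding $\R P^2$ components via \cref{lem:noProj} and torsion-freeness of $\pi_1(M)$, which you correctly derive from asphericity rather than assume). Your preliminary reduction is also slightly more self-contained than the paper's, which asserts without detail that the core ``necessarily contains some non-($S^2$ or $\R P^2$) boundary components''; your capping step and the observation that a closed core would force $\widehat M$ closed supply exactly the justification that claim needs. Both proofs are valid; yours is arguably the cleaner one in the non-orientable setting.
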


\begin{proof}
    In the proof, all homology groups will take coefficients in $\Q$. We first prove that $M$ has $b_1(M) > 0$. If $M$ is orientable, then the Half Lives-Half Dies Lemma states that the kernel of the inclusion induced map $H_1(\partial M) \rightarrow  H_1(M)$ has dimension equal to $\frac{1}{2} \dim H_1(\partial M)$. In particular, $b_1(M) > 0$. If $M$ is not orientable, then let $p \colon \widetilde M \rightarrow M$ be the orientation double cover, and let $\tau \colon H_1(M) \hookrightarrow H_1(\widetilde M)$ be the (injective) transfer homomorphism. The following diagram commutes
    \[
        \begin{tikzcd}
            H_1(\partial M) \arrow[d, "\tau", hook] \arrow[r] & H_1(M) \arrow[d, "\tau", hook] \\
            H_1(\partial \widetilde{M}) \arrow[r]            & H_1(\widetilde M) \nospacepunct{,}
        \end{tikzcd}
    \]
    and the bottom map is nonzero by the Half Lives-Half Dies Lemma. Moreover, the the image of $\tau \colon H_1(\partial M) \hookrightarrow H_1(\partial \widetilde M)$ is not contained in the kernel of $H_1(\partial \widetilde M) \rightarrow H_1(\widetilde M)$, which is nontrivial on each non-spherical boundary component. We conclude that $H_1(\partial M) \rightarrow H_1(M)$ is nonzero, and therefore $b_1(M) > 0$. We have thus shown that $\pi_1(M)$ has a homomorphism to $\Z$.

    Let $G \leqslant \pi_1(M)$ be a finitely generated group. If $G$ is of finite-index in $\pi_1(M)$, then the injectivity of the transfer homomorphism implies that $b_1(G) > 0$. Now suppose that $G$ is of infinite index and let $N \rightarrow M$ be the corresponding non-compact covering space. Let $L$ be a core of $N$ obtained by the Scott Core Theorem. Note that $L$ necessarily contains some non-($S^2$ or $\R P^2$) boundary component, and therefore by the work above $b_1(G) = b_1(N) = b_1(L) > 0$. \qedhere
\end{proof}

\begin{thm}\label{thm:nonOriIrred}
    Let $M$ be a compact and irreducible $3$-manifold with torsion-free fundamental group $G$. Then any crossed product $k*G$ embeds in a division ring, which is Hughes-free if $G$ is locally indicable.
\end{thm}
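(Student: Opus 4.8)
The plan is to run the arguments of \cref{thm:nonGraphDivRings} and \cref{thm:graphMfld} again, this time feeding in the non-orientable JSJ Decomposition Theorem of Bonahon--Siebenmann and using the orientation double cover to import orientability wherever the orientable proofs relied on it. We may assume $M$ is non-orientable, since otherwise the statement is \cref{thm:3mfld}. Because $G$ is torsion-free and $M$ is irreducible, $M$ is aspherical, and by \cref{lem:noProj} there are no essential projective planes to worry about; thus the non-orientable JSJ decomposition cuts $M$ along essential tori and Klein bottles into Seifert fibred and atoroidal pieces. I split the proof according to whether $M$ is a closed graph manifold (i.e.\ all JSJ pieces are Seifert fibred).

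Suppose first that $M$ is not a closed graph manifold. Let $p \colon \widetilde M \to M$ be the orientation double cover, so that $H := \pi_1(\widetilde M) \trianglelefteqslant G$ has index two and $\widetilde M$ is a compact, orientable, irreducible $3$-manifold. Its JSJ decomposition is the preimage under $p$ of that of $M$, and a hyperbolic piece of $M$ lifts to a hyperbolic piece of $\widetilde M$; hence $\widetilde M$ is again not a closed graph manifold. By the arguments of \cref{thm:nonGraphDivRings}, $\widetilde M$ has the factorisation property and $H$ contains a finite-index free-by-cyclic or surface-by-cyclic subgroup $K$. Since finite-index subgroups of free-by-cyclic and surface-by-cyclic groups are again of this form, the normal core $H' \trianglelefteqslant G$ of $K$ is a finite-index, free-by-cyclic or surface-by-cyclic normal subgroup of $G$; in particular it is locally indicable, Hughes-free embeddable (by \cite[Theorem 1.1]{JaikinZapirain2020THEUO} and \cite{HughesDivRings1970_2}), good, of finite cohomological dimension, and has the factorisation property. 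The factorisation property passes from $H'$ to $G$ by \cite[Proposition 3.7]{FriedlSchreveTillmann_ThurstonFox}, so \cref{thm:VCS_field} shows $kG$ embeds into a division ring, and \cref{lem:HF_fi} upgrades this to a Hughes-free embedding once $G$ is assumed locally indicable.

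Now suppose $M$ is a closed graph manifold, and adapt the proof of \cref{thm:graphMfld}. The JSJ decomposition realises $G$ as the fundamental group of a graph of groups whose vertex groups are the fundamental groups of the (possibly non-orientable) Seifert fibred pieces, each having non-empty toroidal or Klein-bottle boundary. By \cref{lem:locInd3mfld} each vertex group is locally indicable, and each is Hughes-free embeddable: passing to the orientation double cover of a piece reduces to the orientable Seifert fibred case of \cref{thm:nonGraphDivRings}, after which \cref{lem:HF_fi} promotes the resulting embedding of the index-two subgroup to a Hughes-free embedding of the whole vertex group. Now \cref{thm:graphOfHF} shows $kG$ embeds into a division ring. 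When $G$ is locally indicable, the Hughes-free statement follows exactly as at the end of the proof of \cref{thm:graphMfld}: choosing a surjection $G \to \Z$ and writing the associated infinite cyclic cover as a directed union of $\pi_1$-injective graph manifolds with boundary exhibits $\pi_1$ of that cover as Hughes-free embeddable, whence $G$ is Hughes-free embeddable by \cite{HughesDivRings1970_2}.

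The main obstacle is really the bookkeeping needed to remove orientability: the inputs from virtual fibring and the existence of a finite-index free-by-cyclic or surface-by-cyclic subgroup are proved for orientable manifolds, so the crux is to check that the orientation double cover preserves the dichotomy ``closed graph manifold versus not'' (so that the orientable theorems apply to $\widetilde M$ and to the JSJ pieces) and that the factorisation property and Hughes-free embeddability of a finite-index subgroup ascend to $G$ --- precisely the roles played by \cite[Proposition 3.7]{FriedlSchreveTillmann_ThurstonFox}, \cref{thm:VCS_field}, and \cref{lem:HF_fi}.
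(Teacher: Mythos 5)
Your overall strategy is the same as the paper's --- cut along the Bonahon--Siebenmann JSJ decomposition, use orientation double covers to import the orientable results, ascend from finite-index subgroups via \cref{thm:VCS_field} and \cref{lem:HF_fi}, and invoke \cref{thm:graphOfHF} when there are several pieces --- and your first case (not a closed graph manifold, handled through the orientation double cover and virtual fibring) is a legitimate, slightly different packaging of \cref{thm:nonGraphDivRings}. However, there is a genuine gap in your second case: you assert that the Seifert fibred JSJ pieces ``each [have] non-empty toroidal or Klein-bottle boundary,'' which is false when the JSJ decomposition is trivial, i.e.\ when $M$ is itself a closed Seifert fibred manifold. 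In that situation the single vertex group is $G$ itself, it is closed, so \cref{lem:locInd3mfld} does not apply, and $G$ need not be locally indicable even though it is torsion-free (closed Seifert fibred manifolds can have perfect, torsion-free, infinite fundamental groups). Consequently your appeal to \cref{lem:HF_fi} to ``promote the resulting embedding of the index-two subgroup to a Hughes-free embedding of the whole vertex group'' breaks down --- that lemma requires the ambient group to be locally indicable --- and \cref{thm:graphOfHF} then has nothing to feed on.

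The conclusion in that case is still true, but it must be reached differently: one only needs a finite-index normal subgroup of $G$ that is locally indicable and Hughes-free embeddable together with the factorisation property, and then \cref{thm:VCS_field} produces the division ring directly, with no local indicability demanded of $G$. This is exactly how the paper's proof proceeds: it first embeds each $k G_i$ in a division ring via \cref{thm:VCS_field} (disposing of the one-component case immediately), and only when $n \geqslant 2$ --- where every piece genuinely has boundary and \cref{lem:locInd3mfld} applies --- does it upgrade the vertex embeddings to Hughes-free ones and invoke \cref{thm:graphOfHF}. Splitting off the single-piece closed case in this way would repair your argument; for $n \geqslant 2$ and for the non-graph-manifold case your proof is sound.
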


\begin{rem}
    If the orientation double cover $\overline{M} \rightarrow M$ is not a closed graph manifold, then $\pi_1(\overline M)$ is good and has the factorisation property, and therefore the arguments of \cref{sec:3mflds} can be used to conclude \cref{thm:nonOriIrred}. However, in the case where $\overline M$ is a closed graph manifold, a non-orientable version of the proof of \cref{thm:graphMfld} is needed.
\end{rem}

\begin{proof}
    By the non-orientable version of the JSJ Decomposition Theorem due to Bonahon--Siebenmann \cite[Splitting Theorem 1]{BonahonSiebenmann_nonoriJSJ} (see \cite[Theorem 3.4]{Bonahon_3mfldsSurvey} for the form in which we are using the theorem), $M$ can be cut along two-sided essential tori and Klein bottles such that every component either is Seifert fibred or contains no essential $2$-tori or Klein bottles. Let $M_1, \dots, M_n$ be the JSJ components and $G_1, \dots, G_n$ their fundamental groups. Each $G_i$ has a finite index subgroup having the factorisation property by \cref{thm:nonGraphDivRings} and therefore has the factorisation property by \cite[Theorem 3.7]{FriedlSchreveTillmann_ThurstonFox}. Moreover, each $G_i$ has a finite index locally indicable subgroup \cite[Flowchart 1, (C.19)]{AFW_3mfldBook} whose group ring is Hughes-free embeddable by \cref{thm:3mfld}. Thus, by \cref{thm:VCS_field}, each $k*G_i$ embeds into a division ring $\mathcal D_i$. Thus, if the JSJ decomposition only has one component, we are done.

    Now suppose that $n \geqslant 2$. Then each JSJ component has a non-empty boundary whose components are all either tori or Klein bottles. Then each fundamental group $\pi_1(M_i)$ is locally indicable (\cref{lem:locInd3mfld}), has the factorisation property (previous paragraph), and has a finite-index subgroup $H_i$ such that $k*H_i$ has a Hughes-free embedding (\cref{thm:3mfld}). Thus, each group algebra $k*\pi_1(M_i)$ has a Hughes-free embedding by \cref{lem:LinnellFIovergroup}. By \cref{cor:locIndGraphDivRing}, we conclude that $k*G$ embeds in a division ring.
    
    If $G$ is locally indicable, then the proof that the embedding into a division ring can be made Hughes-free is the same as the one given in the orientable case (see the proof of \cref{thm:graphMfld}). \qedhere
\end{proof}

We now conclude using the Prime Decomposition Theorem for non-orientable manifolds, due to Epstein \cite[Theorem 1.1]{Epstein_nonoriprime}.

\begin{thm}\label{thm:nonori3mfld}
    \cref{thm:3mfld,cor:3mfldKapl} hold without the assumption of orientability.
\end{thm}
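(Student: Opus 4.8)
The goal is to remove the orientability hypothesis from \cref{thm:3mfld} and \cref{cor:3mfldKapl}. The plan is to run the same prime-decomposition argument used in the proof of \cref{thm:3mfld}, but feeding in the \emph{non-orientable} Prime Decomposition Theorem of Epstein \cite{Epstein_nonoriprime} in place of \cref{prop:3decomp}, and invoking the non-orientable irreducible case that was just established in the preceding theorem.

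First I would reduce to the finitely generated case exactly as in \cref{cor:3mfldKapl}: Kaplansky's zerodivisor conjecture is a local condition, and local indicability passes to (in fact is inherited by) finitely generated subgroups, so it suffices to produce the embedding for finitely generated $G$. Next, given a $3$-manifold $M$ with finitely generated torsion-free $\pi_1(M) = G$, I would apply Epstein's Prime Decomposition Theorem \cite{Epstein_nonoriprime} to write $G$ as a free product $G \cong F * \pi_1(M_1) * \cdots * \pi_1(M_n)$, where $F$ is free and each $M_i$ is compact and irreducible (here \cref{lem:noProj} is what guarantees we need not worry about essential projective planes contributing torsion, so that the irreducible pieces genuinely have torsion-free fundamental group and no $\R P^2$-summands interfere). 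The previous (unnumbered) theorem of \cref{sec:appendix} shows that each $k[\pi_1(M_i)]$ embeds into a division ring $\mathcal D_i$, and into a Hughes-free division ring whenever $\pi_1(M_i)$ is locally indicable. As before $kF$ embeds into $\mathcal D_{kF}$.

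With the vertex division rings in hand, the free-product step is identical to the orientable proof of \cref{thm:3mfld}: $kG$ embeds into the coproduct $\mathcal D_{kF} *_k \mathcal D_1 *_k \cdots *_k \mathcal D_n$ of these division rings amalgamated over the common subfield $k$, which is a semifir by \cref{thm:2semifir_semifir} and hence embeds into a division ring by \cref{thm:semifirDivRing}. When $G$ is locally indicable, each free factor is locally indicable, so each $k[\pi_1(M_i)]$ is Hughes-free embeddable, and the Hughes-free embedding of $kG$ follows from \cite[Corollary 6.13(iv)]{JSanchezThesis}; the non-finitely-generated locally indicable case is then handled by the direct-union argument verbatim as in \cref{thm:3mfld}. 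The only genuinely new inputs are Epstein's theorem and the non-orientable irreducible theorem above, so I do not expect a serious obstacle here.

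The step that requires the most care is checking that Epstein's decomposition really produces irreducible pieces to which the preceding theorem applies, and in particular that the orientation subtleties (the possible $S^2 \widetilde\times S^1$ or $\R P^2$-related summands appearing in the non-orientable prime decomposition) do not create torsion or non-irreducible factors. This is precisely what \cref{lem:noProj} and the torsion-freeness assumption on $G$ are designed to rule out, so once those are invoked the remaining work is bookkeeping. I would therefore present this as a short proof that simply states the decomposition, cites the irreducible case for the pieces, and then repeats the semifir/coproduct argument.

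\begin{proof}
    Since Kaplansky's zerodivisor conjecture is a local condition and local indicability is inherited by finitely generated subgroups, it suffices to treat the case where $G$ is finitely generated. By the non-orientable Prime Decomposition Theorem \cite[Theorem 1.1]{Epstein_nonoriprime}, together with \cref{lem:noProj} and the torsion-freeness of $G$, which rule out essential projective planes, we may write
    \[
        G \cong F * \pi_1(M_1) * \cdots * \pi_1(M_n),
    \]
    where $F$ is a free group and each $M_i$ is a compact, irreducible $3$-manifold with torsion-free fundamental group. By the preceding theorem, each $k[\pi_1(M_i)]$ embeds into a division ring $\mathcal D_i$, and $kF$ embeds into $\mathcal D_{kF}$. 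Thus $kG$ embeds into the coproduct
    \[
        \mathcal D_{kF} *_k \mathcal D_1 *_k \cdots *_k \mathcal D_n,
    \]
    which is a semifir by \cref{thm:2semifir_semifir} and hence embeds into a division ring by \cref{thm:semifirDivRing}.

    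Now suppose $G$ is locally indicable. Then each free factor is locally indicable, so each $k[\pi_1(M_i)]$ embeds into a Hughes-free division ring by the preceding theorem, and hence $kG$ embeds into a Hughes-free division ring by \cite[Corollary 6.13(iv)]{JSanchezThesis}. If $G$ is locally indicable but not finitely generated, then every finitely generated subgroup of $G$ is a finitely generated locally indicable $3$-manifold group and is thus Hughes-free embeddable by the above; since $G$ is the direct union of such subgroups, it is itself Hughes-free embeddable. \qedhere
\end{proof}
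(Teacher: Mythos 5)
Your proposal is correct and follows essentially the same route as the paper: apply the non-orientable Prime Decomposition Theorem, use \cref{lem:noProj} together with torsion-freeness to rule out $\R P^2$'s so that $G$ splits as a free product of a free group and the fundamental groups of the compact irreducible pieces, invoke the preceding appendix theorem for those pieces, and then repeat the semifir/coproduct and Hughes-free arguments from \cref{thm:3mfld}. The only cosmetic difference is that you cite Epstein directly where the paper cites Bonahon's survey formulation, and you write out the concluding steps that the paper compresses into ``conclude as in the proof of \cref{thm:3mfld}.''
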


\begin{proof}
    By the non-orientable Prime Decomposition Theorem (see \cite[Theorems 3.1 and 3.2]{Bonahon_3mfldsSurvey}, $M$ can be cut along finitely many essential, $2$-sided copies of $S^2$ or $\R P^2$, such that each of the components (after cutting) is irreducible or simply connected. By \cref{lem:noProj}, the assumption that $G$ is torsion-free implies that in fact there are no $\R P^2$'s and that $G$ decomposes as the free product of the fundamental groups of the irreducible pieces. We now conclude as in the proof of \cref{thm:3mfld}.
\end{proof}

\bibliographystyle{alpha}
\bibliography{bib}

\begin{thebibliography}{JZL{\'{A}}20}

\bibitem[AFW15]{AFW_3mfldBook}
Matthias Aschenbrenner, Stefan Friedl, and Henry Wilton.
\newblock {\em 3-manifold groups}.
\newblock EMS Series of Lectures in Mathematics. European Mathematical Society
  (EMS), Z\"{u}rich, 2015.

\bibitem[Ago13]{AgolHaken}
Ian Agol.
\newblock The virtual {H}aken conjecture.
\newblock {\em Doc. Math.}, 18:1045--1087, 2013.
\newblock With an appendix by Agol, Daniel Groves, and Jason Manning.

\bibitem[AOS23]{AOS_hyperbolization}
Grigori Avramidi, Boris Okun, and Kevin Schreve.
\newblock Homology growth, hyperbolization, and fibering, 2023.

\bibitem[Bau74]{Baumslag_OneRelProblems}
Gilbert Baumslag.
\newblock Some problems on one-relator groups.
\newblock In {\em Proceedings of the {S}econd {I}nternational {C}onference on
  the {T}heory of {G}roups ({A}ustralian {N}at. {U}niv., {C}anberra, 1973)},
  Lecture Notes in Math., Vol. 372, pages 75--81. Springer, Berlin, 1974.

\bibitem[Ber74]{Bergman_Coprod}
George~M. Bergman.
\newblock Modules over coproducts of rings.
\newblock {\em Trans. Amer. Math. Soc.}, 200:1--32, 1974.

\bibitem[Bie75]{BieriHNN}
Robert Bieri.
\newblock Mayer-{V}ietoris sequences for {HNN}-groups and homological duality.
\newblock {\em Math. Z.}, 143(2):123--130, 1975.

\bibitem[Bon02]{Bonahon_3mfldsSurvey}
Francis Bonahon.
\newblock Geometric structures on 3-manifolds.
\newblock In {\em Handbook of geometric topology}, pages 93--164.
  North-Holland, Amsterdam, 2002.

\bibitem[Bro84]{BrodskiiOR}
Sergei~D. Brodski\u{\i}.
\newblock Equations over groups, and groups with one defining relation.
\newblock {\em Sibirsk. Mat. Zh.}, 25(2):84--103, 1984.

\bibitem[BS87]{BonahonSiebenmann_nonoriJSJ}
Francis Bonahon and Laurence~C. Siebenmann.
\newblock The characteristic toric splitting of irreducible compact
  {$3$}-orbifolds.
\newblock {\em Math. Ann.}, 278(1-4):441--479, 1987.

\bibitem[Coh59]{Cohnfreeproducts}
Paul~M. Cohn.
\newblock On the free product of associative rings.
\newblock {\em Math. Z.}, 71:380--398, 1959.

\bibitem[Coh68]{Cohnfreeproducts_3}
Paul~M. Cohn.
\newblock On the free product of associative rings. {III}.
\newblock {\em J. Algebra}, 8:376--383, 1968.

\bibitem[Coh73]{Cohnfreeprodskewfields}
Paul~M. Cohn.
\newblock The free product of skew fields.
\newblock {\em J. Austral. Math. Soc.}, 16(2):300--308, 1973.

\bibitem[Coh85]{Cohn_FreeRingsRelations}
Paul~M. Cohn.
\newblock {\em Free rings and their relations}, volume~19 of {\em London
  Mathematical Society Monographs}.
\newblock Academic Press, Inc. [Harcourt Brace Jovanovich, Publishers], London,
  second edition, 1985.

\bibitem[Coh06]{cohn06FIR}
P.~M. Cohn.
\newblock {\em Free ideal rings and localization in general rings}, volume~3 of
  {\em New Mathematical Monographs}.
\newblock Cambridge University Press, Cambridge, 2006.

\bibitem[Dic83]{Dicks_HNN}
Warren Dicks.
\newblock The {HNN} construction for rings.
\newblock {\em J. Algebra}, 81(2):434--487, 1983.

\bibitem[DK92]{DuchampKrob_RAAGsRTFN}
G\'erard Duchamp and Daniel Krob.
\newblock The lower central series of the free partially commutative group.
\newblock {\em Semigroup Forum}, 45(3):385--394, 1992.

\bibitem[Dro83]{Droms_thesis}
Carl Gordon~Arthur Droms.
\newblock {\em Graph groups (algebra, Kim, Roush, Magnus)}.
\newblock ProQuest LLC, Ann Arbor, MI, 1983.
\newblock Thesis (Ph.D.)--Syracuse University.

\bibitem[Eps61]{Epstein_nonoriprime}
David B.~A. Epstein.
\newblock Projective planes in {$3$}-manifolds.
\newblock {\em Proc. London Math. Soc. (3)}, 11:469--484, 1961.

\bibitem[FHL22]{FisherHughesLeary_artin}
Sam~P. Fisher, Sam Hughes, and Ian~J. Leary.
\newblock Homological growth of artin kernels in positive characteristic, 2022.

\bibitem[Fis21]{Fisher_improved}
Sam~P. Fisher.
\newblock Improved algebraic fibrings, 2021.

\bibitem[FL19]{FriedlLuck_euler}
Stefan Friedl and Wolfgang L\"{u}ck.
\newblock {$L^2$}-{E}uler characteristics and the {T}hurston norm.
\newblock {\em Proc. Lond. Math. Soc. (3)}, 118(4):857--900, 2019.

\bibitem[FST17]{FriedlSchreveTillmann_ThurstonFox}
Stefan Friedl, Kevin Schreve, and Stephan Tillmann.
\newblock Thurston norm via {F}ox calculus.
\newblock {\em Geom. Topol.}, 21(6):3759--3784, 2017.

\bibitem[Gar21]{Gardam_units}
Giles Gardam.
\newblock A counterexample to the unit conjecture for group rings.
\newblock {\em Ann. of Math. (2)}, 194(3):967--979, 2021.

\bibitem[Gr{\"a}20]{Grater20}
Joachim Gr{\"a}ter.
\newblock Free division rings of fractions of crossed products of groups with
  {C}onradian left-orders.
\newblock {\em Forum Math.}, 32(3):739--772, 2020.

\bibitem[GW04]{GW04}
Kenneth~R. Goodearl and Robert~Breckenridge {Warfield Jr}.
\newblock {\em An introduction to noncommutative Noetherian rings}.
\newblock Cambridge University Press, Cambridge, 2004.

\bibitem[Hig40]{Higman_units}
Graham. Higman.
\newblock The units of group-rings.
\newblock {\em Proc. London Math. Soc. (2)}, 46:231--248, 1940.

\bibitem[Hig51]{Higman_higmangroup}
Graham Higman.
\newblock A finitely generated infinite simple group.
\newblock {\em J. London Math. Soc.}, 26:61--64, 1951.

\bibitem[HK21]{HennekeKielak_agrarian}
Fabian Henneke and Dawid Kielak.
\newblock Agrarian and {$L^2$}-invariants.
\newblock {\em Fund. Math.}, 255(3):255--287, 2021.

\bibitem[How82]{Howie_locIndGps}
James Howie.
\newblock On locally indicable groups.
\newblock {\em Math. Z.}, 180(4):445--461, 1982.

\bibitem[Hug70]{HughesDivRings1970}
Ian Hughes.
\newblock Division rings of fractions for group rings.
\newblock {\em Comm. Pure Appl. Math.}, 23:181--188, 1970.

\bibitem[Hug72]{HughesDivRings1970_2}
Ian Hughes.
\newblock Division rings of fractions for group rings. {II}.
\newblock {\em Comm. Pure Appl. Math.}, 25:127--131, 1972.

\bibitem[HW08]{HaglundWise_special}
Fr\'{e}d\'{e}ric Haglund and Daniel~T. Wise.
\newblock Special cube complexes.
\newblock {\em Geom. Funct. Anal.}, 17(5):1551--1620, 2008.

\bibitem[JZ19]{Jaikin_l2survey}
Andrei Jaikin-Zapirain.
\newblock {$L^2$}-{B}etti numbers and their analogues in positive
  characteristic.
\newblock In {\em Groups {S}t {A}ndrews 2017 in {B}irmingham}, volume 455 of
  {\em London Math. Soc. Lecture Note Ser.}, pages 346--405. Cambridge Univ.
  Press, Cambridge, 2019.

\bibitem[JZ21]{JaikinZapirain2020THEUO}
Andrei Jaikin-Zapirain.
\newblock The universality of {H}ughes-free division rings.
\newblock {\em Selecta Math. (N.S.)}, 27(4):Paper No. 74, 33, 2021.

\bibitem[JZL23]{JaikinLinton_oneRelCoherence}
Andrei Jaikin-Zapirain and Marco Linton.
\newblock On the coherence of one-relator groups and their group algebras.
\newblock \url{https://arxiv.org/pdf/2303.05976.pdf}, Jul 2023.

\bibitem[JZL{\'{A}}20]{JaikinLopez_Atiyah}
Andrei Jaikin-Zapirain and Diego L\'{o}pez-{\'{A}}lvarez.
\newblock The strong {A}tiyah and {L}\"{u}ck approximation conjectures for
  one-relator groups.
\newblock {\em Math. Ann.}, 376(3-4):1741--1793, 2020.

\bibitem[Kie20]{KielakBNSviaNewton}
Dawid Kielak.
\newblock The {B}ieri-{N}eumann-{S}trebel invariants via {N}ewton polytopes.
\newblock {\em Invent. Math.}, 219(3):1009--1068, 2020.

\bibitem[KL23a]{KielakLinton_3mfldAtiyah}
Dawid Kielak and Marco Linton.
\newblock The atiyah conjecture for three-manifold groups.
\newblock \url{https://arxiv.org/abs/2303.15907}, 2023.

\bibitem[KL23b]{KielakLinton_FbyZ}
Dawid Kielak and Marco Linton.
\newblock Virtually free-by-cyclic groups.
\newblock \url{https://arxiv.org/abs/2302.11500}, 2023.

\bibitem[KLM88]{KropLinnellMoody_TFEAisOre}
P.~H. Kropholler, P.~A. Linnell, and J.~A. Moody.
\newblock Applications of a new {$K$}-theoretic theorem to soluble group rings.
\newblock {\em Proc. Amer. Math. Soc.}, 104(3):675--684, 1988.

\bibitem[L{\'{A}}21]{DLopezAlvarezThesis}
Diego L\'{o}pez-{\'{A}}lvarez.
\newblock {\em Sylvester rank functions, epic division rings and the strong
  Atiyah conjecture for locally indicable groups}.
\newblock PhD thesis, Universidad Aut\`onoma de Madrid, 2021.

\bibitem[Lew74]{Lewinfree}
J.~Lewin.
\newblock Fields of fractions for group algebras of free groups.
\newblock {\em Trans. Amer. Math. Soc.}, 192:339--346, 1974.

\bibitem[Lin93]{LinnellDivRings93}
Peter~A. Linnell.
\newblock Division rings and group von {N}eumann algebras.
\newblock {\em Forum Math.}, 5(6):561--576, 1993.

\bibitem[Lin22]{Linton_ORH}
Marco Linton.
\newblock One-relator hierarchies.
\newblock \url{https://arxiv.org/abs/2202.11324}, 2022.

\bibitem[Liu13]{Liu_npcGraph}
Yi~Liu.
\newblock Virtual cubulation of nonpositively curved graph manifolds.
\newblock {\em J. Topol.}, 6(4):793--822, 2013.

\bibitem[LL78]{LewinLewinORTF}
Jacques Lewin and Tekla Lewin.
\newblock An embedding of the group algebra of a torsion-free one-relator group
  in a field.
\newblock {\em J. Algebra}, 52(1):39--74, 1978.

\bibitem[LS07]{LinnellSchick_AtiyahExt}
Peter Linnell and Thomas Schick.
\newblock Finite group extensions and the {A}tiyah conjecture.
\newblock {\em J. Amer. Math. Soc.}, 20(4):1003--1051, 2007.

\bibitem[L{\"u}c02]{Luck02}
Wolfgang L{\"u}ck.
\newblock {\em {$L\sp 2$}-invariants: theory and applications to geometry and
  {$K$}-theory}.
\newblock Springer-Verlag, Berlin, 2002.

\bibitem[LW22]{LouderWilton_NegativeImmersions}
Larsen Louder and Henry Wilton.
\newblock Negative immersions for one-relator groups.
\newblock {\em Duke Math. J.}, 171(3):547--594, 2022.

\bibitem[Mal48]{Malcev_series}
Anatoli\u\i~I. Mal{'}cev.
\newblock On the embedding of group algebras in division algebras.
\newblock {\em Doklady Akad. Nauk SSSR (N.S.)}, 60:1499--1501, 1948.

\bibitem[Mal80]{Malcolmson}
Peter Malcolmson.
\newblock Determining homomorphisms to skew fields.
\newblock {\em J. Algebra}, 64(2):399--413, 1980.

\bibitem[Mur21]{murray2021counterexamples}
Alan~G. Murray.
\newblock More counterexamples to the unit conjecture for group rings.
\newblock \url{https://arxiv.org/abs/2106.02147}, 2021.

\bibitem[Neu49]{Neumann_series}
Bernhard~H. Neumann.
\newblock On ordered division rings.
\newblock {\em Trans. Amer. Math. Soc.}, 66:202--252, 1949.

\bibitem[PT11]{PetersonThom_Freiheit}
Jesse Peterson and Andreas Thom.
\newblock Group cocycles and the ring of affiliated operators.
\newblock {\em Invent. Math.}, 185(3):561--592, 2011.

\bibitem[Pud14]{Puder_PR}
Doron Puder.
\newblock Primitive words, free factors and measure preservation.
\newblock {\em Israel J. Math.}, 201(1):25--73, 2014.

\bibitem[PW14]{PrzWise_GraphSpecial}
Piotr Przytycki and Daniel~T. Wise.
\newblock Graph manifolds with boundary are virtually special.
\newblock {\em J. Topol.}, 7(2):419--435, 2014.

\bibitem[PW18]{PrzWise_MixedSpecial}
Piotr Przytycki and Daniel~T. Wise.
\newblock Mixed 3-manifolds are virtually special.
\newblock {\em J. Amer. Math. Soc.}, 31(2):319--347, 2018.

\bibitem[RT19]{RivasTriestino_Higman}
Crist\'{o}bal Rivas and Michele Triestino.
\newblock One-dimensional actions of {H}igman's group.
\newblock {\em Discrete Anal.}, pages Paper No. 20, 15, 2019.

\bibitem[S{\'a}n08]{JSanchezThesis}
Javier S{\'a}nchez.
\newblock {\em On division rings and tilting modules}.
\newblock PhD thesis, Universitat Aut\`onoma de Barcelona, 2008.

\bibitem[Sch14]{Schreve_AtiyahVCS}
Kevin Schreve.
\newblock The strong {A}tiyah conjecture for virtually cocompact special
  groups.
\newblock {\em Math. Ann.}, 359(3-4):629--636, 2014.

\bibitem[Sco73]{Scott_core}
G.~Peter Scott.
\newblock Compact submanifolds of {$3$}-manifolds.
\newblock {\em J. London Math. Soc. (2)}, 7:246--250, 1973.

\bibitem[Ser77]{Serre_arbres}
Jean-Pierre Serre.
\newblock {\em Arbres, amalgames, {${\rm SL}_{2}$}}.
\newblock Ast\'{e}risque, No. 46. Soci\'{e}t\'{e} Math\'{e}matique de France,
  Paris, 1977.
\newblock Avec un sommaire anglais, R\'{e}dig\'{e} avec la collaboration de
  Hyman Bass.

\bibitem[Ser97]{Serre_GaloisCohom}
Jean-Pierre Serre.
\newblock {\em Galois Cohomology}.
\newblock Springer Monographs in Mathematics. Springer-Verlag, Berlin, 1997.

\bibitem[Sve02]{Svetlov_npcGraph}
Pavel~V. Svetlov.
\newblock Graph-manifolds of nonpositive curvature are virtually fibered over a
  circle.
\newblock {\em Algebra i Analiz}, 14(5):188--201, 2002.

\end{thebibliography}

\end{document}